 \documentclass[11pt]{article}

%% Remove in final version:
%  \usepackage[notref,notcite]{showkeys}

% \usepackage{graphicx}
% \usepackage{epstopdf,epsfig}

%\usepackage{bbm}
%\usepackage{palatino}
%\usepackage{comment}
% \usepackage[mathcal]{euler}
% \usepackage{extarrows}

%\usepackage[margin=3.95cm]{geometry}

\usepackage{amsmath,amsthm,amssymb,xypic,stmaryrd, graphicx, mathtools}
\usepackage{hyperref}

\theoremstyle{plain}
    \newtheorem{theorem}{Theorem}[section]
    \newtheorem{lemma}[theorem]{Lemma}
    
    \newtheorem{proposition}[theorem]{Proposition}

 \theoremstyle{definition}
    \newtheorem{definition}[theorem]{Definition}
    
    \newtheorem{example}[theorem]{Example}

    \newtheorem{remark}[theorem]{Remark}

\theoremstyle{remark}

\numberwithin{equation}{section}

 \DeclareMathOperator{\Tr}{Tr}
 \DeclareMathOperator{\tr}{tr}
  
    \DeclareMathOperator{\id}{id}
\DeclareMathOperator{\im}{im}

\DeclareMathOperator{\Ad}{Ad}
\DeclareMathOperator{\ad}{ad}
\DeclareMathOperator{\ind}{index}

\DeclareMathOperator{\End}{End}

\DeclareMathOperator{\sgn}{sgn}
\DeclareMathOperator{\ch}{ch}

\DeclareMathOperator{\fp}{fp}

\DeclareMathOperator{\erf}{erf}

\DeclareMathOperator{\Spin}{Spin}

\DeclareMathOperator{\SO}{SO}

\DeclareMathOperator{\vol}{vol}

\begin{document}

%%% Emphasise using italics or bold face...? %%%

\newcommand{\myemph}{\emph}

\newcommand{\Spinc}{\Spin^c}

    \newcommand{\R}{\mathbb{R}}
    \newcommand{\C}{\mathbb{C}}
    \newcommand{\N}{\mathbb{N}}
    \newcommand{\Z}{\mathbb{Z}}
    \newcommand{\Q}{\mathbb{Q}}
    \newcommand{\bT}{\mathbb{T}}
    \newcommand{\bP}{\mathbb{P}}

\newcommand{\g}{\mathfrak{g}}
\newcommand{\h}{\mathfrak{h}}
\newcommand{\p}{\mathfrak{p}}
\newcommand{\kg}{\mathfrak{g}}
\newcommand{\kt}{\mathfrak{t}}
\newcommand{\ka}{\mathfrak{a}}
\newcommand{\XX}{\mathfrak{X}}
\newcommand{\kh}{\mathfrak{h}}
\newcommand{\kp}{\mathfrak{p}}
\newcommand{\kk}{\mathfrak{k}}
\newcommand{\kn}{\mathfrak{n}}
\newcommand{\km}{\mathfrak{m}}
\newcommand{\kso}{\mathfrak{so}}
\newcommand{\ksu}{\mathfrak{su}}
\newcommand{\kspin}{\mathfrak{spin}}

\newcommand{\cA}{\mathcal{A}}
\newcommand{\cE}{\mathcal{E}}
\newcommand{\calL}{\mathcal{L}}
\newcommand{\calH}{\mathcal{H}}
\newcommand{\cO}{\mathcal{O}}
\newcommand{\cB}{\mathcal{B}}
\newcommand{\cK}{\mathcal{K}}
\newcommand{\cP}{\mathcal{P}}
\newcommand{\cN}{\mathcal{N}}
\newcommand{\calD}{\mathcal{D}}
\newcommand{\cC}{\mathcal{C}}
\newcommand{\calS}{\mathcal{S}}
\newcommand{\cM}{\mathcal{M}}
\newcommand{\cU}{\mathcal{U}}

\newcommand{\cCM}{\cC}
\newcommand{\PM}{P}
\newcommand{\DM}{D}
\newcommand{\LM}{L}
\newcommand{\vM}{v}

\newcommand{\sumGam}{\textstyle{\sum_{\Gamma}} }

\newcommand{\sigDg}{\sigma^D_g}

\newcommand{\Bigwedge}{\textstyle{\bigwedge}}

\newcommand{\ii}{\sqrt{-1}}

\newcommand{\Ubar}{\overline{U}}

\newcommand{\beq}[1]{\begin{equation} \label{#1}}
\newcommand{\eeq}{\end{equation}}

\newcommand{\mattwo}[4]{
\left( \begin{array}{cc}
#1 & #2 \\ #3 & #4
\end{array}
\right)
}

\title{A higher index and rapidly decaying kernels}

\author{
Hao Guo, 
Peter Hochs and 
Hang Wang
}
%
%\address{Tsinghua University}
%\email{haoguo@mail.tsinghua.edu.cn}
%
%\address{Radboud University}
%\email{p.hochs@math.ru.nl}
%
%\address{East China Normal University}
%\email{wanghang@math.ecnu.edu.cn}

%
%\renewcommand{\thefootnote}{\fnsymbol{footnote}} 
%\footnotetext{\emph{2020 Mathematics Subject Classification:} 19K56 (primary), 58J20, 53C35 (secondary)}     
%\renewcommand{\thefootnote}{\arabic{footnote}} 

\date{\today}

\maketitle

\begin{abstract}
We construct an index of first-order, self-adjoint, elliptic differential operators in the $K$-theory of a Fr\'echet algebra of smooth kernels with faster than exponential off-diagonal decay. We show that this index can be represented by an idempotent involving heat operators. The rapid decay of the kernels in the algebra used is helpful in proving convergence of pairings with cyclic cocycles. Representing the index in terms of heat operators allows one to use heat kernel asymptotics  to compute such pairings. We give a link to von Neumann algebras and $L^2$-index theorems as an immediate application, and work out further applications in other papers.
\end{abstract}

\tableofcontents

\section{Introduction}

Let $X$ be a complete Riemannian manifold, and $E \to X$ a Hermitian vector bundle. Let $D$ be a first-order, self-adjoint, elliptic differential operator (such as a Dirac operator). The higher index 
\beq{eq Roe index intro}
\ind(D) \in K_*(C^*(X))
\eeq
of such operators $D$ in terms of the Roe algebra $C^*(X)$, including equivariant versions, has been used successfully in many places.  (See e.g.\  \cite{HR05I, HR05II, HR05III, KY06, PS14, Roe96, WY20, Yu00} for a very small selection.) To obtain relevant and computable numbers via pairings with traces and higher cyclic cocycles, however, it is useful to construct an index in the $K$-theory of an algebra of more rapidly decaying Schwartz kernels. 
%(As already done in Roe's thesis work \cite{Roe88I}, before the Roe algebra itself was defined.)
 In this paper, we construct such an index.

 This index is used in  \cite{GHW25b} to obtain a $K$-theoretic index to which the traces that appear in the Selberg trace formula for cofinite-volume lattices in real semisimple Lie groups of real rank one \cite{OW78, Warner79} can be applied, refining the index theorem in \cite{BM83}. In ongoing work, we compute further pairings of the index in this paper with cyclic cocycles, using the rapid decay of the kernels in the algebra we use here to ensure convergence of these pairings.

We fix an algebra $S$ of differential operators on the exterior tensor product $E \boxtimes E^* \to X \times X$ with suitable properties, see page \pageref{page S} for details. We then consider a Fr\'echet algebra $\cA_S(E)$ of smooth sections of $E \boxtimes E^*$ that have faster off-diagonal decay than any exponential function of the Riemannian distance, and whose derivatives with respect to operators in $S$ also have this decay behaviour. In fact, we use a slight generalisation of this algebra, involving a proper map $\varphi$ from another manifold $Y$ to $X$. This is to obtain a link with the Casselman Schwartz algebra in \cite{GHW25b}, where $X = G/K$, for $G$ a connected, real semisimple Lie group and $K<G$ maximal compact, $Y = G$ and $\varphi$ is the quotient map. We will ignore the map $\varphi$ for now, see Definition \ref{def AX phi} for details.

The main technical result in this paper, see  Theorem \ref{thm multipliers}, is that under certain assumptions on $D$, for all $t>0$, the operator
\[
R_t := \frac{1-e^{-tD^2}}{D^2}D
\]
is a multiplier of $\cA_S(E)$, if a group $H$ acts on $X$ and $E$, preserving all structure, and such that $X/H$ is compact. The operator $D$ is a multiplier of $\cA_S(E)$ for more basic reasons, and the heat operator
\[
e^{-tD^2} = 1-R_t D
\]
lies in $\cA_S(E)$ itself. It follows that, if $D$ is odd with respect to a grading on $E$, we have a class
\beq{eq D K1 intro}
[D] \in K_1(\cM(\cA_S(E))/\cA_S(E)),
\eeq
where $\cM(\cA_S(E))$ is the multiplier algebra of $\cA_S(E)$. Applying the boundary map $\partial$ in the long exact sequence associated to the ideal $\cA_S(E) \subset \cM(\cA_S(E))$ to \eqref{eq D K1 intro}, we obtain a higher index
\beq{eq index intro}
\ind_S(D):= \partial [D] \in K_0(\cA_S(E)).
\eeq
Furthermore, this index
can be represented by a standard idempotent involving heat operators, so that one can use heat kernel asymptotics to compute its pairings with traces and other cyclic cocycles. We show that $\cA_S(E)$ embeds into $C^*(M)$, and that the map induced by this inclusion maps 
\eqref{eq index intro} to \eqref{eq Roe index intro}.

We similarly obtain an equivariant index
\beq{eq H index intro}
\ind_{S,H}(D):= \partial [D] \in K_0(\cA_S(E)^H),
\eeq
where the superscript $H$ denotes the subalgebra of $H$-invariant kernels.

The assumptions made hold in the important special case of Dirac operators on the space $G/K$ as above, where the operators in $S$ are given by the left and right action of the universal enveloping algebra of the Lie algebra of $G$. 

Under milder assumptions, we obtain the weaker result (Proposition \ref{prop index weak}) that the index \eqref{eq Roe index intro} can be represented by an idempotent in (the unitisation of) $\cA_S(E)$. This allows one to pair \eqref{eq Roe index intro} with cyclic cocycles on $\cA_S(E)$ to some extent, but if  the map 
\beq{eq incl K}
K_0(\cA_S(E)) \to K_0(C^*(M))
\eeq
 induced by the inclusion is not injective, then
 it is not clear that such a pairing is well-defined. A related point is that  the index \eqref{eq index intro} may contain more information than \eqref{eq Roe index intro}.
Bradd \cite{Bradd23} has shown that a $G$-equivariant version of \eqref{eq incl K} is bijective for $X = G/K$, but in general it is not clear if \eqref{eq incl K} is injective. Then the index \eqref{eq index intro} defined directly in terms of the algebra $\cA_S(E)$ is more useful for pairings with traces and cyclic cocycles.

One immediate application is a link between the equivariant  index \eqref{eq H index intro}, with $H$ replaced by a cofinite-volume, discrete subgroup $\Gamma <H$, to von Neumann algebras and $L^2$-index theorems. See Section \ref{sec vN}. As mentioned above, we work out further  applications in other papers.

\subsection*{Acknowledgements}

PH was supported by the Netherlands Organisation for Scientific Research NWO, through ENW-M grants  OCENW.M.21.176 and OCENW.M.23.063. HW was supported by the grants 23JC1401900, NSFC 12271165 and in part by Science and Technology Commission of Shanghai Municipality (No.\ 22DZ2229014).

\section{A higher index}\label{sec prelim}

\subsection{An algebra of exponentially decaying kernels}\label{sec prelim alg}

Consider a  Riemannian manifold $X$, with  Riemannian distance $d$ and Riemannian density $\mu$.
\begin{definition}\label{def unif exp}
The manifold $X$ has \emph{uniformly exponential growth} if there are $a_0,C>0$ such that  for all $x \in X$, 
\[
\int_X e^{-{a_0} d(x,x')}\, d\mu(x') \leq C.
\]
\end{definition}
If $x \in X$ and $r>0$, then we write $B_r(x)$ for the open Riemannian ball around $x$ of radius $r$.
\begin{example}\label{ex ue cocpt}
Suppose that a group $H$ acts isometrically on $X$, such that $X/H$ is compact. Let $x_0 \in X$ and $a_0>0$, and suppose that 
\[
\int_X e^{-a_0 d(x_0,x')}\, d\mu(x') 
\]
converges. Let $r>0$ be such that $H \cdot B_r(x_0, r) = X$. Then for all  $x \in X$, write $x = hx_1$, for $h \in H$ and $x_1 \in B_r(x_0)$. Then by  $H$-invariance of $d$ and $\mu$ and the triangle inequality, 
\[
\int_X e^{-{a_0} d(x,x')}\, d\mu(x') = 
\int_X e^{-{a_0} d(x_1,x')}\, d\mu(x')
\leq
e^{a_0r}  \int_X e^{-{a_0} d(x_0,x')}\, d\mu(x'). 
\]
So  $X$ has uniformly exponential growth. This example includes the case where $X = G/K$, for a connected Lie group $G$, compact subgroup $K<G$, and a $G$-invariant Riemannian metric on $X$.
\end{example}

We suppose from now on that $X$ has uniformly exponential growth. We consider a Hermitian vector bundle $E \to X$. A section $\kappa$ of  the exterior tensor product $E \boxtimes E^* \to X \times X$ has \emph{finite propagation} if there is an $r>0$ such that for all $x,x' \in X$ with $d(x,x')>r$, we have $\kappa(x,x')=0$. If two $L^{\infty}$-sections $\kappa, \lambda$ of $E \boxtimes E^*$ have finite propagation, then their composition $\kappa \lambda$ is well-defined as an $L^{\infty}$-section of $E \boxtimes E^*$ with finite propagation, by
\beq{eq compos kernels}
(\kappa \lambda)(x,x') = \int_X \kappa(x,x'')\lambda(x'', x')\, d\mu(x''),
\eeq
for all $x, x' \in X$.

 Fix an algebra $S$ of differential operators on $E \boxtimes E^*$\label{page S} such that
\begin{enumerate}
\item $S$ contains
the identity operator;
\item
for all $l \in \Z_{\geq 0}$, the space of order $l$ differential operators in $S$ is a finite-dimensional linear subspace of $S$;
\item \label{page S 3} for all $P \in S$, there are differential operators $Q$ and $R$ on $E$ such that $Q \otimes 1$ and $1\otimes R^*$ lie in  $S$ and $P = Q \otimes R^*$. Here the differential operator $R^*$ on $E^*$ is defined by the property that for all $\sigma \in \Gamma^{\infty}(E^*)$ and $s \in \Gamma^{\infty}_c(E)$,
\[
\int_X \langle (R^* \sigma)(x), s(x)\rangle\, d\mu(x) = \int_X \langle \sigma(x), (Rs)(x)\rangle\, d\mu(x).
\]
% such that for all  $\kappa, \lambda \in \Gamma^{\infty}(E \boxtimes E^*)$ with finite propagation,
%\beq{eq PQR}
%P(\kappa \lambda) = (Q \kappa) (R \lambda).
%\eeq
 \end{enumerate}
 The third point implies that for all $\kappa \in \Gamma^{\infty}(E \boxtimes E^*)$ with finite propagation, the section $P(\kappa)$ is the Schwartz kernel of the composition $Q \circ \kappa \circ R$. Here, and in the rest of this paper, we identify $\kappa$ with the operator it defines.
\begin{example}\label{ex AS GK}
Let $G$ be a Lie group, and let $V$ be a finite-dimensional, complex vector space. Consider the trivial vector bundle $E = G \times V \to G$. 
Let $R$ be the right regular representation of the universal enveloping algebra $\cU(\kg)$ on $C^{\infty}(G)$. For $P_1,P_2 \in \cU(\kg)$, let the operator $R(P_1) \otimes R(P_2)$ on $\Gamma^{\infty}(E \boxtimes E^*) = C^{\infty}(G \times G) \otimes \End(V)$ be defined by the action by $R(P_1)$ on the first copy of $G$, and the action by $R(P_2)$ on the second copy of $G$. 
Let
\[
S:= \{R(P_1) \otimes R(P_2); P_1,P_2 \in \cU(\kg)\}.
\]
Then $S$ satisfies the three conditions listed above. For the third, if $P = R(P_1) \otimes R(P_2)$, then we may take $Q = R(P_1)$ and $R = R(P_2)^*$.
\end{example}
% (We will later consider the case where $X$ is a Lie group, and $S$ is the universal enveloping algebra of its Lie algebra.)
%For $P,Q \in S$, we have the differential operator $P \otimes Q$ on $E \boxtimes E^* \cong E \boxtimes E$, where we use the Hermitian metric for the identification. 

Let $\Gamma_{S, \fp}^{\infty}(E \boxtimes E^*)$ be the space of  smooth sections $\kappa$ of $E \boxtimes E^* \to X\times X$ with finite propagation, such that $P\kappa$ is bounded for all $P \in S$. 
\begin{definition}\label{def AX}
For $a>0$ and $P \in S$, the seminorm $\|\cdot \|_{a, P}$ on $\Gamma_{S, \fp}^{\infty}(E \boxtimes E^*)$ is given by
\beq{eq seminorms ASE}
\|\kappa\|_{a, P}  = \sup_{x,x' \in X} e^{a d(x,x')}\|(P\kappa)(x,x') \|,
\eeq
for $\kappa \in \Gamma_{S, \fp}^{\infty}(E \boxtimes E^*)$.
We denote the completion of $\Gamma_{S, \fp}^{\infty}(E \boxtimes E^*)$  in the seminorms $\|\cdot \|_{a, P}$, for $a >0$ and $P \in S$, by $\cA_S(E)$.
\end{definition}

\begin{lemma}\label{lem AX Frechet alg}
The space $\cA_S(E)$ is a Fr\'echet algebra under the composition \eqref{eq compos kernels}.
\end{lemma}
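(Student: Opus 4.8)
The plan is to verify in turn that $\cA_S(E)$ is a Fréchet space, that $\Gamma^\infty_{S,\fp}(E \boxtimes E^*)$ is an associative algebra under \eqref{eq compos kernels} with jointly continuous multiplication, and that this multiplication extends to the completion. The first point is essentially formal. By the second defining property of $S$, the space of operators of each order $l$ in $S$ is finite-dimensional, so $S$ is spanned as a vector space by a countable set $\{P_j\}$; since $\|\kappa\|_{a,P} \leq \sum_j |c_j|\,\|\kappa\|_{a,P_j}$ whenever $P = \sum_j c_j P_j$, and $\|\kappa\|_{a,P} \leq \|\kappa\|_{a',P}$ for $a \leq a'$, the countable family $\{\|\cdot\|_{n,P_j}\}_{n \in \Z_{>0},\, j}$ induces the same topology. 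Since $S$ contains $\id$, the seminorm $\|\cdot\|_{1,\id}$ already separates points of $\Gamma^\infty_{S,\fp}(E \boxtimes E^*)$, so the space is Hausdorff, and $\cA_S(E)$ is complete by construction; hence it is a Fréchet space.

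For the algebra structure on the dense subspace, finite propagation and boundedness of $\kappa\lambda$ were recorded above, and smoothness of $\kappa\lambda$ follows by differentiating under the integral sign in \eqref{eq compos kernels}, whose integrand in $x''$ is supported in a bounded set for $(x,x')$ near a fixed point. Given $P \in S$, I would invoke the third defining property to write $P = Q \otimes R^*$ with $Q\otimes 1 \in S$ and $1\otimes R^* \in S$; then $P(\kappa\lambda)$ is the Schwartz kernel of $Q\circ\kappa\circ\lambda\circ R$, which is the composition of the kernels $\kappa_1 := (Q\otimes 1)\kappa$ and $\lambda_1 := (1\otimes R^*)\lambda$. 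As $S$ is an algebra, $\kappa_1$ and $\lambda_1$ again lie in $\Gamma^\infty_{S,\fp}(E\boxtimes E^*)$, so $P(\kappa\lambda) = \kappa_1\lambda_1$ is bounded and smooth; thus $\kappa\lambda \in \Gamma^\infty_{S,\fp}(E\boxtimes E^*)$, and associativity is inherited from operator composition.

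The crux is the continuity estimate. Keeping the notation above and letting $a_0, C$ be the constants from uniformly exponential growth, the triangle inequality $d(x,x') \leq d(x,x'') + d(x'',x')$ gives, for all $x, x' \in X$,
\[
e^{a d(x,x')}\big\|(P(\kappa\lambda))(x,x')\big\| \leq \int_X \big(e^{a d(x,x'')}\|\kappa_1(x,x'')\|\big)\,\big(e^{a d(x'',x')}\|\lambda_1(x'',x')\|\big)\, d\mu(x'').
\]
Bounding the first factor by $\|\kappa\|_{a,\,Q\otimes 1}$ and the second by $e^{-a_0 d(x'',x')}\|\lambda\|_{a+a_0,\,1\otimes R^*}$, and then using $\int_X e^{-a_0 d(x'',x')}\, d\mu(x'') \leq C$ (by symmetry of $d$), one obtains
\[
\|\kappa\lambda\|_{a,P} \leq C\,\|\kappa\|_{a,\,Q\otimes 1}\,\|\lambda\|_{a+a_0,\,1\otimes R^*}.
\]
So multiplication is jointly continuous on the dense subspace, and by the standard argument (products of Cauchy sequences are Cauchy, with limit independent of the representatives) it extends uniquely to a jointly continuous, associative, bilinear multiplication on $\cA_S(E)$, which is therefore a Fréchet algebra. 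The only real obstacle is the bookkeeping: using the third property of $S$ to reduce a general $P$ to tensor-product form so the exponential weight can be split between the two factors, and choosing the seminorm indices ($a$ on one, $a+a_0$ on the other) so that uniformly exponential growth closes the estimate.
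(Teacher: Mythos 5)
Your proof is correct and follows essentially the same route as the paper: reduce to the countable family of seminorms via finite-dimensionality, split a general $P$ as $Q \otimes R^*$ via the third property of $S$, and control the composition kernel by the triangle inequality and uniformly exponential growth. The only difference is bookkeeping: you close the estimate with $\|\kappa\|_{a,\,Q\otimes 1}\,\|\lambda\|_{a+a_0,\,1\otimes R^*}$, valid for all $a>0$, whereas the paper uses $\|\kappa\|_{2a,\,Q\otimes 1}\,\|\lambda\|_{a,\,1\otimes R}$ for $a\geq a_0$ and then handles $a<a_0$ by monotonicity of the seminorms; your choice avoids that case split.
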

Lemma \ref{lem AX Frechet alg} is proved in Subsection \ref{sec AS}.

Now let $H$ be a Lie group (possibly with infinitely many connected components, so $H$ may be discrete), acting isometrically on $X$ and such that $X/H$ is compact. Suppose that $E$ is an $H$-equivariant vector bundle, and that the action preserves the Hermitian metric on $E$. 
Consider the  action by $H \times H$ on $\Gamma^{\infty}(E \boxtimes E^*)$ given by
\beq{eq HxH action}
((h,h') \cdot \kappa)(x,x') := 
h\kappa(h^{-1}x, h'^{-1}x')h'^{-1}, 
\eeq
for $h,h' \in H$, $x,x' \in X$ and $\kappa \in \Gamma^{\infty}(E \boxtimes E^*)$. Suppose that the operators in $S$ commute with this action. Unless specified otherwise, by the action of $H$ on $\Gamma^{\infty}(E \boxtimes E^*)$, we  mean the restriction of the action \eqref{eq HxH action} to the diagonal. (But note the notation \eqref{eq gamma circ kappa}.)
%
%For $\kappa \in \cA_S(E)$ and $h \in H$, we define $h\cdot \kappa \in \Gamma^{\infty}(E \boxtimes E^*)$ by
%\beq{eq H AS}
%(h \cdot \kappa)(x,x') := 
%h\kappa(h^{-1}x, h^{-1}x')h^{-1}, 
%\eeq
%for $x,x' \in X$. 
%Suppose that the operators in $S$ commute with this action.
\begin{lemma}\label{lem ASEH Frechet}
The action by $H$ on $\Gamma^{\infty}(E \boxtimes E^*)$ restricts to an isometric action on $\cA_S(E)$. 
The fixed-point set $\cA_S(E)^H$ of this action is a Fr\'echet algebra.
\end{lemma}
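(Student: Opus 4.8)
The plan is to verify both claims first on the dense subspace $\Gamma^{\infty}_{S,\fp}(E \boxtimes E^*)$ and then extend by continuity. First I would check that the diagonal $H$-action preserves $\Gamma^{\infty}_{S,\fp}(E \boxtimes E^*)$ and is isometric in each of the seminorms \eqref{eq seminorms ASE}. Smoothness and finite propagation of $h \cdot \kappa$ are immediate, the latter because $H$ acts isometrically on $X$. Since by hypothesis the operators in $S$ commute with the $H \times H$-action \eqref{eq HxH action}, they commute with its diagonal restriction, so $P(h \cdot \kappa) = h \cdot (P\kappa)$ for every $P \in S$ and $h \in H$. The pointwise norm on the fibres of $E \boxtimes E^*$ is induced by the Hermitian metric on $E$ and is therefore invariant under the unitary fibrewise action of $H$; combining this with $d(h^{-1}x, h^{-1}x') = d(x,x')$ and the change of variables $(x,x') \mapsto (hx, hx')$ gives
\[
\|h \cdot \kappa\|_{a,P} = \sup_{x,x' \in X} e^{a d(x,x')} \| h\, (P\kappa)(h^{-1}x, h^{-1}x')\, h^{-1} \| = \sup_{x,x' \in X} e^{a d(x,x')} \|(P\kappa)(x,x')\| = \|\kappa\|_{a,P}
\]
for all $a > 0$ and $P \in S$. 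Hence each $h \in H$ is a linear isometry of $\Gamma^{\infty}_{S,\fp}(E \boxtimes E^*)$ for the seminorms of Definition \ref{def AX}, so it extends uniquely to an isometry of the completion $\cA_S(E)$; the group law and the action of the identity element are preserved on passing to the completion by density, so we obtain an isometric $H$-action on $\cA_S(E)$.

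Next I would show this action is by algebra automorphisms. For $\kappa, \lambda \in \Gamma^{\infty}_{S,\fp}(E \boxtimes E^*)$ and $h \in H$, substituting $x'' \mapsto h^{-1}x''$ in \eqref{eq compos kernels} and using that the isometric $H$-action preserves the Riemannian density $\mu$ yields $h \cdot (\kappa \lambda) = (h \cdot \kappa)(h \cdot \lambda)$. Both sides are continuous functions of $(\kappa, \lambda) \in \cA_S(E) \times \cA_S(E)$ — using Lemma \ref{lem AX Frechet alg} for continuity of the composition and the previous paragraph for continuity of the action — so by density the identity holds for all $\kappa, \lambda \in \cA_S(E)$. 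Thus $H$ acts on $\cA_S(E)$ by Fr\'echet algebra automorphisms.

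Finally, $\cA_S(E)^H = \bigcap_{h \in H} \ker(\id - h)$ is an intersection of closed linear subspaces, each $\id - h$ being continuous, hence a closed linear subspace of the Fr\'echet space $\cA_S(E)$; and it is closed under composition by the automorphism property just established. A closed subalgebra of a Fr\'echet algebra is again a Fr\'echet algebra, which proves the second claim. The argument is essentially bookkeeping: the only points needing any care are the commutation identity $P(h \cdot \kappa) = h \cdot (P\kappa)$ and the invariance of the fibrewise norm, both living on the dense subspace, after which everything is a routine continuity-and-density extension, so I do not anticipate a genuine obstacle here.
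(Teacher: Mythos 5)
Your proof is correct and follows essentially the same approach as the paper's: verify isometry on the dense subspace via $H$-equivariance of $P$ and $H$-invariance of the fibrewise metric, extend to the completion, and observe that the fixed-point set is closed. You are slightly more explicit than the paper in spelling out that the action is by algebra automorphisms (so the fixed-point set is closed under composition) and in noting the density extension step, but there is no substantive difference.
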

Lemma  \ref{lem ASEH Frechet} is proved in Subsection \ref{sec AS}.

Let $C^*(X; E)^{H}$ be the $H$-equivariant Roe algebra of $X$ defined on the $C_0(X)$-module $L^2(E)$. This is the closure in $\cB(L^2(E))$ of the algebra of operators that are $H$-equivariant and locally compact, and have finite propagation. The algebra $S=\C$ trivially satisfies the assumptions made above, so we have the algebra $\cA_{\C}(E)^{H}$. 
\begin{lemma}\label{lem Roe GK}
The algebra $\cA_{\C}(E)^{H}$ is contained in $C^*(X; E)^{H}$, and the inclusion map is continuous.
\end{lemma}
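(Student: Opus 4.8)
The plan is to control the operator norm of the kernels in $\cA_{\C}(E)$ by the seminorms defining this algebra, via the Schur test, and then to approximate by finite-propagation operators. Throughout, write $\|\kappa\|_a := \sup_{x,x'\in X} e^{a d(x,x')}\|\kappa(x,x')\|$ for the seminorm $\|\cdot\|_{a,\id}$ of Definition \ref{def AX} in the case $S=\C$, and let $a_0,C>0$ be as in Definition \ref{def unif exp}. First I would observe that a kernel $\kappa \in \Gamma^{\infty}_{\C,\fp}(E\boxtimes E^*)$ satisfies the pointwise bound $\|\kappa(x,x')\|\le \|\kappa\|_{a_0} e^{-a_0 d(x,x')}$, so by uniform exponential growth and symmetry of $d$,
\[
\sup_x \int_X \|\kappa(x,x')\|\, d\mu(x') \le C\|\kappa\|_{a_0}, \qquad \sup_{x'} \int_X \|\kappa(x,x')\|\, d\mu(x) \le C\|\kappa\|_{a_0}.
\]
The Schur test then shows that $\kappa$ defines a bounded operator $T_\kappa$ on $L^2(E)$ with $\|T_\kappa\|_{\cB(L^2(E))}\le C\|\kappa\|_{a_0}$. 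Since $\|\cdot\|_{a_0}$ is among the seminorms defining $\cA_{\C}(E)$, the assignment $\kappa\mapsto T_\kappa$ extends to a continuous map $\cA_{\C}(E)\to\cB(L^2(E))$ (in fact an algebra homomorphism, using Lemma \ref{lem AX Frechet alg}), and for $\kappa$ in the completion the bound $\|\kappa(x,x')\|\le\|\kappa\|_a e^{-a d(x,x')}$ persists for every $a>0$ because evaluation at $(x,x')$ is continuous for $\|\cdot\|_a$.

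Next I would handle equivariance. Because $H$ acts isometrically on $X$ it preserves $\mu$, and it preserves the Hermitian metric on $E$, so $H$ acts unitarily on $L^2(E)$. For an $L^\infty$-kernel one has $hT_\kappa h^{-1}=T_{h\cdot\kappa}$ with $h\cdot\kappa$ the diagonal restriction of the action \eqref{eq HxH action}; both sides are continuous in $\kappa$, so this holds on all of $\cA_{\C}(E)$. Hence for $\kappa\in\cA_{\C}(E)^H$ the operator $T_\kappa$ is $H$-equivariant.

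It then remains to show $T_\kappa\in C^*(X;E)^H$ for $\kappa\in\cA_{\C}(E)^H$. Fix $a>a_0$ and a cutoff $\chi_r\colon[0,\infty)\to[0,1]$ with $\chi_r=1$ on $[0,r]$ and $\chi_r=0$ on $[r+1,\infty)$, and set $\kappa_r(x,x'):=\chi_r(d(x,x'))\kappa(x,x')$. Then $\kappa_r$ has propagation at most $r+1$, is $H$-invariant since $d$ is, and satisfies $\|\kappa_r(x,x')\|\le\|\kappa(x,x')\|$, so $T_{\kappa_r}$ is bounded; it is locally compact because for $f\in C_c(X)$ the kernel $f(x)\kappa_r(x,x')$ is bounded and supported in a set of finite $\mu\times\mu$-measure (uniform exponential growth forces bounded subsets of $X$ to have finite volume), so $fT_{\kappa_r}$ is Hilbert--Schmidt, and the case $f\in C_0(X)$, as well as $T_{\kappa_r}f$, follows by approximation. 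Thus $T_{\kappa_r}\in C^*(X;E)^H$. Since $\kappa-\kappa_r$ vanishes where $d(x,x')\le r$, on its support $\|(\kappa-\kappa_r)(x,x')\|\le\|\kappa\|_a e^{-(a-a_0)r}e^{-a_0 d(x,x')}$, so the Schur bound of the first step gives $\|T_\kappa-T_{\kappa_r}\|_{\cB(L^2(E))}\le C\|\kappa\|_a e^{-(a-a_0)r}\to 0$. Hence $T_\kappa\in C^*(X;E)^H$, and continuity of the inclusion $\cA_{\C}(E)^H\hookrightarrow C^*(X;E)^H$ is exactly the continuity established in the first step (injectivity, if desired, follows since $T_\kappa$ determines the continuous kernel $\kappa$).

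The main obstacle is the last step: one must verify that the finite-propagation truncations genuinely lie in the equivariant Roe algebra — the nontrivial point being local compactness, which rests on the finiteness of volumes of bounded sets, itself a consequence of uniform exponential growth — and that the truncation error tends to zero in operator norm, for which it is essential to have decay at a rate $a$ strictly larger than $a_0$; this is the one place where the (faster-than-)exponential decay built into $\cA_{\C}(E)$ is used. Everything else reduces to the Schur test and continuity of the maps involved.
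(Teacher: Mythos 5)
Your proof is correct and follows essentially the same route as the paper: establish a continuous homomorphism $\cA_{\C}(E)^H \to \cB(L^2(E))$ controlled by a single $\|\cdot\|_{a}$-seminorm, then verify that the image sits inside the closed subalgebra $C^*(X;E)^H$. Two minor differences: you use the Schur test in place of the paper's direct Cauchy--Schwarz computation (Lemma \ref{lem bdd GK}), giving a slightly cleaner bound $C\|\kappa\|_{a_0}$; and you show $T_\kappa \in C^*(X;E)^H$ for a general $\kappa$ in the completion by explicit truncation $\kappa_r = \chi_r(d)\kappa$ together with the decay rate $a>a_0$, whereas the paper simply notes that the dense subalgebra $\Gamma^{\infty}_{\C,\fp}(E\boxtimes E^*)^H$ already consists of $H$-equivariant, locally compact, finite-propagation operators and invokes continuity plus closedness of $C^*(X;E)^H$. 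Your truncation step is therefore somewhat redundant, but it is valid and has the small benefit of being self-contained (it produces an explicit approximating sequence in the generating subalgebra rather than appealing to density).
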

This lemma is proved in Subsection \ref{sec AS}. All algebras $\cA_S(E)^H$, for more general $S$, are included continuously into $\cA_{\C}(E)^{H}$, and hence into $C^*(X; E)^{H}$. 

We will use slight generalisations  of the algebras $\cA_S(E)$ and $\cA_S(E)^H$. Let $Y$ be another Riemannian manifold with uniformly exponential growth. Let $\varphi\colon Y \to X$ be a smooth map. If $\kappa \in \Gamma^{\infty}(E \boxtimes E^*)$, then we define $\varphi^*\kappa \in \Gamma^{\infty}(\varphi^*E \boxtimes \varphi^*E^*)$ by
\beq{eq phi star}
(\varphi^*\kappa)(y,y') = \kappa(\varphi(y), \varphi(y')) \in E_{\varphi(y)} \otimes E^*_{\varphi(y')} = 
(\varphi^*E \boxtimes \varphi^*E^*)_{(y,y')},
\eeq
for $y,y' \in Y$.
We assume that $\varphi$ is
\begin{enumerate}
\item surjective;
\item
a
 \emph{quasi-isometry} in the sense that there are $A, B>0$ such that for all $y,y' \in Y$,
\beq{eq phi quasi isom}
A^{-1} d_Y(y,y') - B \leq
d_X(\varphi(y), \varphi(y')) \leq A d_Y(y,y') + B, 
\eeq
where $d_X$ is the Riemannian distance on $X$, and $d_Y$ is the one on $Y$;
\item an \emph{algebra homomorphism} in the sense that for all $\kappa, \lambda \in \Gamma_{\fp}^{\infty}(E \boxtimes E^*)$, we have
\[
\varphi^*(\kappa \lambda) =  (\varphi^* \kappa) (\varphi^*\lambda).
\]
\end{enumerate}
In the third point, we note that the second point implies that $ (\varphi^* \kappa)$ and $(\varphi^*\lambda)$ have finite propagation, so their composition is defined.

Let $S$ be an algebra of differential operators as above, but for the vector bundle $\varphi^*E \to Y$ rather than for $E$ itself.
Let $\Gamma_{S, \varphi, \fp}^{\infty}(E \boxtimes E^*)$ be the space of  smooth sections $\kappa$ of $E \boxtimes E^* \to X\times X$ with finite propagation, such that $P \varphi^*\kappa$ is bounded for all $P \in S$. 
\begin{definition}\label{def AX phi}
For $a>0$ and $P \in S$, the seminorm $\|\cdot \|_{a, P}$ on $\Gamma_{S, \varphi, \fp}^{\infty}(E \boxtimes E^*)$ is given by
\beq{eq seminorms ASE phi}
\|\kappa\|_{a, P}  = \|\varphi^* \kappa\|_{a, P}
\eeq
for $\kappa \in \Gamma_{S, \fp}^{\infty}(E \boxtimes E^*)$, where the right hand side is defined as in \eqref{eq seminorms ASE}.
We denote the completion of $\Gamma_{S, \varphi, \fp}^{\infty}(E \boxtimes E^*)$  in the seminorms $\|\cdot \|_{a, P}$, for $a >0$ and $P \in S$, by $\cA_{S, \varphi}(E)$.
\end{definition}
In this definition, note that surjectivity of $\varphi$ implies injectivity of $\varphi^*$. Hence if all seminorms \eqref{eq seminorms ASE phi} are zero, then in particular $\|\kappa\|_{1,1}=0$, so $\kappa = 0$.

By the construction and assumptions, the expression \eqref{eq phi star} defines an isometric algebra homomorphism
\beq{eq phi star A}
\varphi^* \colon \cA_{S, \varphi}(E) \xrightarrow{\cong} \cA_{S}(\varphi^*E).
\eeq
Hence $\cA_{S, \varphi}(E) $ is a Fr\'echet algebra by Lemma \ref{lem AX Frechet alg}.

If the Lie group $H$ also acts on $Y$, and $\varphi$ is $H$-equivariant, then $\varphi^*E$ naturally becomes an $H$-equivariant vector bundle. The algebra isomorphism \ref{eq phi star A} is now $H$-equivariant, so $\cA_{S, \varphi}(E)^H  \subset \cA_{S, \varphi}(E)$ is a Fr\'echet algebra by Lemma \ref{lem ASEH Frechet}, isomorphic to $\cA_{S}(\varphi^*E)^H$ via \eqref{eq phi star A}.

\begin{example}
If $Y=X$ and $\varphi = \id_X$ is the identity map, then $\cA_{S, \id_X}(E)= \cA_{S}(E)$ and $\cA_{S, \id_X}(E)^H= \cA_{S}(E)^H$. So Definition \ref{def AX phi} is a generalisation of Definition \ref{def AX}.
\end{example}

\begin{example}\label{ex ASqEG}
Consider the setting of Example \ref{ex AS GK}. Let $K<G$ be a compact subgroup, and suppose that $V$ carries a unitary representation of $K$. Let $X = G/K$, and let $E = G \times_K V \to X$ be  the quotient of $G \times V$ by the $K$-action given by
\[
k\cdot (g,v) = (gk^{-1}, k \cdot v),
\]
for $k \in K$, $g \in G$ and $v \in V$.  Let $q \colon G \to X$ be the quotient map. Then $q^*E = G \times V$, and $q$ satisfies all the assumptions on the map $\varphi$ above. For the third, we choose a left Haar measure $dg$ on $G$, and   define the measure $d(gK)$ on $G/K$ by
\beq{eq def dgK}
\int_{G/K} f(gK)\, d(gK) = \int_G (q^*f)(g)\, dg
\eeq
for all $f \in C_c(G/K)$. Then for all $\kappa, \lambda \in \Gamma_{\fp}^{\infty}(E \boxtimes E^*)$, and all $g,g' \in G$,
\begin{multline*}
q^*(\kappa \lambda)(g,g') = \int_{G/K} \kappa(gK, g''K)\lambda(g''K, g'K)\, d(gK)\\
=
 \int_{G} \kappa(gK, g''K)\lambda(g''K, g'K)\, dg = (q^*\kappa) (q^* \lambda)(g,g'). 
\end{multline*}

Consider the actions by $G$ on $X$, $E$ and $G$ induced by left multiplication. Then the operators $R(P_1) \otimes R(P_2)$ in Example \ref{ex AS GK} are $G\times G$-equivariant for all $P_1, P_2 \in \cU(\kg)$.
And $q$ is equivariant for this action, so we obtain the Fr\'echet algebra $\cA_{S, q}(E)^G$.

Note that the operators $R(P_1) \otimes R(P_2)$  do not preserve the subspace 
\[
\Gamma^{\infty}(E \boxtimes E^*) = (C^{\infty}(G \times G) \otimes \End(V))^{K \times K} \subset C^{\infty}(G \times G) \otimes \End(V), 
\]
which is why the algebra $\cA_{S, q}(E)^G$ is used in this case.
\end{example} 

In \cite{GHW25b}, we will use the algebra $\cA_{S, q}(E)^G$  in Example \ref{ex ASqEG}, and link it to Casselman's Schwartz algebra of functions on Lie groups.

%Some motivation for the use of the algebra $\cA_{S, \varphi}(E)^H$, in particular in the setting of Example \ref{ex ASqEG}, is given in Remark \ref{rem why SG}.

\subsection{Construction of the index}\label{sec prelim index}
%
%%\begin{example}\label{ex Sl}
%%In the setting of Example \ref{ex AS GK},  the space $S_l$ consists  of the elements of $\cU(\kg)$ of degrees at most $l$. Then  the operators in $S_l$ commute with the action $\alpha$ by $H=G$. And for all $P \in \cU(\kg)$ and $h\in G$, 
%%\[
%% \beta(h) (1\otimes L(P)) \beta(h^{-1}) =1 \otimes L(\Ad(h)P),
%%\]
%%which again lies in $S_l$.
%%So $S_l$ has the desired $H$-invariance properties in this example, and Proposition \ref{prop BR embed} applies. 
%%\end{example}
%
%Suppose that the conditions of Proposition \ref{prop BR embed} hold. Then this proposition
% yields a continuous, injective $*$-homomorphism
%\beq{eq j BR}
%j \colon B(\R) \to  \cA_{S, \varphi}(E)^H.
%\eeq
%Let $B(D) \subset \cA_{S, \varphi}(E)^H$ be the image of $j$; we denote the inclusion map $B(D) \hookrightarrow \cA_{S, \varphi}(E)^H$ by $j$ as well. We view $B(D)$ as a Fr\'echet algebra, with the topology  for which $j$ defines a homeomorphism from $B(\R)$ to $B(D)$. 
%Let $\cM(B(D))$ be the multiplier algebra of $B(D)$. This is the algebra of all continuous linear maps $T\colon B(D)\to B(D)$ such that for all $f_1, f_2 \in B(D)$, we have $T(f_1 f_2) = (Tf_1) f_2 = f_1 Tf_2$. Because $B(D)$ is commutative and has a bounded approximate identity, $\cM(B(D))$ is a Fr\'echet algebra containing $B(D)$ as an ideal \cite{AR19}.
%
% Consider the maps
%\beq{eq maps K10}
%K_1(\cM(B(D))/B(D)) \xrightarrow{\partial} K_0(B(D)) \xrightarrow{j_*} K_0(\cA_{S, \varphi}(E)^H),
%\eeq
%where the first map is a boundary map in the long exact sequence associated to the ideal $B(D) \subset \cM(B(D))$. 

Let $D$ be an $H$-equivariant, first-order, elliptic, self-adjoint differential operator on $E$. We make the following three assumptions.
\begin{enumerate}
\item There are $Q,R \in S$ such that   $\varphi^* \circ (D^2\otimes 1) = Q \circ \varphi^*$ and $\varphi^* \circ (1 \otimes (D^2)^*) = R \circ \varphi^*$.
\item  For operators $T \in \cB(L^2(E))$ with well-defined, bounded commutators with $D^2$, we write $\ad(D^2)(T) = [D^2, T]$. We assume that for all $f \in C^{\infty}_c(X)$ and $j \geq 0$, the repeated commutator $\ad(D^2)^j(f)$ is a well-defined, bounded operator on $L^2(E)$. Furthermore, if $Z$ is any smooth manifold, and $\tilde f \in C^{\infty}(Z \times X)$ is such that $\tilde f(z, \relbar) \in C^{\infty}_c(X)$ for all $z \in Z$, then we assume that for all $j \geq 0$, the operator norm of $\ad(D^2)^j(\tilde f(z, \relbar))$ depends continuously on $z$.  
\item For all $P \in S$, the compositions $P \circ \varphi^* \circ (D \otimes 1)$ and $P \circ \varphi^* \circ (1 \otimes D)$ are finite sums of compositions of the form $A \circ Q \circ \varphi^*$, for 
bounded endomorphisms $A$ of $\varphi^*E \boxtimes \varphi^* E^*$ and  $Q \in S$.
\end{enumerate}
\begin{example}\label{ex cond D GK}
Consider the setting of Example \ref{ex ASqEG}. Let $\kg = \kk \oplus \kp$ be a Cartan decomposition. The adjoint representation $\Ad\colon K \to \SO(\kp)$ lifts to  double covers $\widetilde{K} \to \Spin(\kp)$. In this way, the standard representation $\Delta_{\kp}$ of $\Spin(\kp)$ may be viewed as a representation of $\widetilde{K}$. 
 Let $W$ be a finite-dimensional, unitary representation of $\widetilde{K}$ such that $V = \Delta_{\kp} \otimes W$ descends to a representation of $K$.
 
 Let $\{X_1, \ldots, X_r\}$ be a orthonormal basis of $\kp$ with respect to a $K$-invariant inner product. Let $c\colon \kp \to \End(\Delta_{\kp})$ be the Clifford action. Then we have the Dirac operator
 \beq{eq Dirac GK}
 D := \sum_{j=1}^r R(X_j) \otimes c(X_j) \otimes 1_W
 \eeq
 on $\bigl( C^{\infty}(G) \otimes \Delta_{\kp} \otimes W\bigr)^K \cong \Gamma^{\infty}(G \times_K V)$. This operator satisfies the assumptions listed above, for $H = G$. 
 
 Indeed, by Proposition 3.1 in \cite{Parthasarathy72}, 
 \[
 D^2 = R(\Omega) + b \quad \in \cU(\kg),
 \]
 where $\Omega \in \cU(\kg)$  is the Casimir element and $b$ is a scalar. So the first assumption holds, with $Q = R(\Omega) \otimes 1$ and $R = 1 \otimes R(\Omega)$.
The second assumption holds as well, because for all $f \in C^{\infty}_c(G)$,
 \[
 \ad(D^2)^j(f) = R( \Omega)^j(f)
 \]
 is multiplication by a compactly supported smooth function.
 
 For the third assumption, let $P = R(P_1) \otimes R(P_2) \in S$. 
Now $q^* \circ D = \widetilde{D} \circ q^*$, where $\widetilde{D}$ is given by the same expression \eqref{eq Dirac GK}. 
So
 \[
 P \circ q^* \circ (D \otimes 1) = \sum_{j=1}^r  (c(X_j) \otimes 1)\circ R(P_1 X_j) \otimes R(P_2) \circ q^*,
 \]
 which is of the desired form. The argument for $P \circ q^* \circ  (1\otimes D)$ is analogous.
\end{example}

We fix $t>0$ once and for all. 
Consider the operator 
\beq{eq def Rt}
R_t := \frac{1-e^{-tD^2}}{D^2}D \in \cB(L^2(E)).
\eeq
The main technical result in this paper is the following.
\begin{theorem}\label{thm multipliers}
The operators $D$ and $R_t$ are multipliers of $\cA_{S, \varphi}(E)$.
\end{theorem}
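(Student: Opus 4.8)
The plan is to verify directly that, for each $\kappa \in \cA_{S,\varphi}(E)$, the products $D\kappa$, $\kappa D$, $R_t\kappa$, and $\kappa R_t$ again lie in $\cA_{S,\varphi}(E)$, with the maps $\kappa \mapsto D\kappa$ etc.\ continuous in the Fréchet topology; since $\cA_{S,\varphi}(E) \cong \cA_S(\varphi^*E)$ via the isometric isomorphism \eqref{eq phi star A}, it suffices to work with $\varphi^*\kappa$ on $Y$ and estimate the seminorms $\|\cdot\|_{a,P}$ of the transformed kernels. For $D$ itself this is the "more basic" part: left multiplication by $D$ has Schwartz kernel $(D\otimes 1)\kappa$, and by assumption (1) $\varphi^*\circ(D^2\otimes 1) = Q\circ\varphi^*$, while assumption (3) controls $P\circ\varphi^*\circ(D\otimes 1)$ as a finite sum $\sum A_i\circ Q_i\circ\varphi^*$ with $A_i$ bounded and $Q_i\in S$; so $\|D\kappa\|_{a,P} \leq \sum\|A_i\|\,\|\kappa\|_{a,Q_i}$, a continuous estimate, and finite propagation of $D\kappa$ is clear since $D$ is a differential operator (propagation $0$). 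The right multiplication $\kappa D$ is handled symmetrically using the $1\otimes(\cdot)$ versions of (1) and (3).

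The substance is $R_t$. Write $R_t = \psi(D)$ where $\psi(x) = \frac{1-e^{-tx^2}}{x^2}x = \frac{1-e^{-tx^2}}{x}$; the key structural fact is the identity $e^{-tD^2} = 1 - R_t D$ noted in the introduction, equivalently $R_t D = 1 - e^{-tD^2}$. I would first establish that $e^{-tD^2} \in \cA_{S,\varphi}(E)$: the heat kernel has Gaussian off-diagonal decay (faster than any exponential in $d_X$, hence — via the quasi-isometry \eqref{eq phi quasi isom} — faster than any exponential in $d_Y$), and assumptions (1)–(2) let one differentiate the heat kernel by operators in $S$ while retaining such decay; here assumption (2), the continuity and boundedness of the repeated commutators $\ad(D^2)^j(f)$, is exactly what is needed to commute cutoff functions past $D^2$ and keep the estimates uniform. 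Then, from $R_t D = 1 - e^{-tD^2}$, for left multiplication by $R_t$ one writes $R_t\kappa$: using $D(D\kappa') = \kappa'$-type manipulations is not quite available, so instead I would use the spectral/integral representation $R_t = \int_0^t e^{-sD^2}D\,ds$ (since $\frac{d}{ds}e^{-sD^2} = -D^2 e^{-sD^2}$ gives $\int_0^t D^2 e^{-sD^2}\,ds = 1 - e^{-tD^2}$, and dividing the integrand's "$D^2$" into "$D\cdot D$"), reducing $R_t\kappa$ to an integral over $s\in[0,t]$ of $e^{-sD^2}(D\kappa)$, each factor of which is controlled by the previous paragraph and by the $e^{-tD^2}\in\cA_{S,\varphi}(E)$ step — with the continuity-in-$s$ part of assumption (2) ensuring the integral of seminorms converges. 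Again right multiplication $\kappa R_t$ is symmetric, using $D R_t = 1 - e^{-tD^2}$ on the other side.

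The main obstacle I expect is the uniformity of the Gaussian/super-exponential decay estimates for $P(\varphi^*(e^{-sD^2}\kappa))$ simultaneously in $s\in[0,t]$ and across all $P\in S$: one must show that applying an operator $P\in S$ to the composed kernel produces finitely many terms of the form (bounded endomorphism)$\circ$(operator in $S$ applied to $\varphi^*(e^{-sD^2})$)$\circ$(operator in $S$ applied to $\varphi^*\kappa$), then bound the $s$-dependent heat-kernel factor's seminorm uniformly on $[0,t]$, and finally check that the resulting integral $\int_0^t(\cdots)\,ds$ is finite and that the composed section has finite propagation (which follows since $\kappa$ does and $e^{-sD^2}$ has super-exponential decay, so the composition integral \eqref{eq compos kernels} retains super-exponential — though not strictly finite — propagation; one must be slightly careful here and argue that $R_t\kappa$ lies in the \emph{completion} $\cA_{S,\varphi}(E)$ rather than in $\Gamma^\infty_{S,\varphi,\fp}$, by approximating). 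The bookkeeping of (3)-type expansions of $P$ acting on a triple composition is routine but must be done carefully to see that every factor lands in $S$.
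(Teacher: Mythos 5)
Your handling of $D$ via assumption (3) is exactly the paper's Lemma \ref{lem D mult}, and the observation that $R_t$ is the crux is right. The integral representation $R_t = \int_0^t e^{-sD^2}D\,ds$ is valid as an operator identity, and reducing to $R_t\kappa = \int_0^t e^{-sD^2}(D\kappa)\,ds$ with $D\kappa \in \cA_{S,\varphi}(E)$ is a reasonable start. The paper does not take this route: it works directly with $R_t = h_t(D)$ for $h_t(x) = (1-e^{-tx^2})/x$, introduces a Fr\'echet algebra $B(\R)$ of functions with super-exponentially decaying Fourier transform (Lemma \ref{lem BR alg}, Proposition \ref{prop BR embed}), and proves a general multiplier criterion for $f(D)$ (Proposition \ref{prop f(D) multiplier}) by splitting $f(D)\circ\kappa$ into $f(D)\circ(1-\chi_{\varphi(y)})\circ\kappa$ (off-diagonal, handled by replacing $f(D)$ by $g_{1/2}(D)\in B(\R)$, Lemma \ref{lem off-diag}) and $f(D)\circ\chi_{\varphi(y)}\circ\kappa$ (near-diagonal, handled by operator-norm bounds via repeated commutators with $\chi$ and the Sobolev estimate of Remark \ref{rem sup kernel D2}, Lemma \ref{lem near diag}).

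There is a genuine gap in the step you identify as the ``main obstacle,'' and your proposed resolution does not close it. You propose to ``bound the $s$-dependent heat-kernel factor's seminorm uniformly on $[0,t]$'' and integrate, but the seminorms $\|e^{-sD^2}\|_{a,P}$ are \emph{not} uniformly bounded as $s\to 0^+$, and in general not even integrable on $(0,t]$: already the weakest seminorm satisfies $\|e^{-sD^2}\|_{a,1} \geq \sup_x\|\kappa_s(x,x)\| \sim s^{-\dim(X)/2}$, so the submultiplicativity estimate from Lemma \ref{lem AX Frechet alg} gives $\int_0^t\|e^{-sD^2}(D\kappa)\|_{a,P}\,ds \leq C\int_0^t\|e^{-sD^2}\|_{2a,Q\otimes 1}\|D\kappa\|_{a,1\otimes R}\,ds = \infty$ whenever $\dim(X)\geq 2$, and worse once $P$ has positive order. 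Moreover, the ``continuity-in-$s$ part of assumption (2)'' you invoke does not exist: assumption (2) asserts continuity of the operator norm of $\ad(D^2)^j(\tilde f(z,\cdot))$ in the \emph{parameter} $z$ of a family of cutoff functions, which is what Lemma \ref{lem chi x} needs to get $x$-uniform bounds on $\ad(D^2)^j(\chi_x)$; it says nothing about $s\mapsto e^{-sD^2}$. The correct fix is not to estimate the heat kernel factor in $\|\cdot\|_{a,P}$ at all near the diagonal, but to exploit that $\|e^{-sD^2}\|_{\cB(L^2)}\leq 1$ uniformly in $s$, move the derivatives onto the $\kappa$-factor using commutators $\ad(D^2)^j(\chi_{\varphi(y)})$, and pass to a sup-norm via the Sobolev bound of Lemma \ref{lem sup kernel} / Remark \ref{rem sup kernel D2} — i.e.\ exactly the near-diagonal argument of Lemma \ref{lem near diag}. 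Once you import that argument, the $s$-integral is no longer buying you anything over working directly with $h_t(D)$, which is why the paper bypasses the integral representation entirely.
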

This result is proved at the end of Subsection \ref{sec pf mult}. (The case for $D$ is immediate from the assumptions, see Lemma \ref{lem D mult}, so the case for $R_t$ is the main step.) It has the following consequence.

Suppose that  $E$ has an $H$-invariant  $\Z/2\Z$ grading.
Suppose that   $D$ is  odd with respect to  this grading. We denote the restrictions of $D$ to even- and odd-graded sections by $D^+$ and $D^-$, respectively. Consider the boundary map 
\beq{eq maps K10}
\partial\colon 
K_1(\cM(\cA_{S, \varphi}(E)^H)/\cA_{S, \varphi}(E)^H) \to K_0(\cA_{S, \varphi}(E)^H),
\eeq
associated to the long exact sequence corresponding to the ideal $\cA_{S, \varphi}(E)^H$ in its algebra of multipliers $\cM(\cA_{S, \varphi}(E)^H)$. 

\begin{theorem}\label{thm def index}
The operator $D \in \cM(\cA_{S, \varphi}(E)^H)$ is invertible  modulo $\cA_{S, \varphi}(E)^H$. The image of the resulting class  $ [D] \in K_1(\cM(\cA_{S, \varphi}(E)^H)/\cA_{S, \varphi}(E)^H)$ under the map \eqref{eq maps K10} equals
\begin{multline}\label{eq index idempotent}
\partial [D] = \\
\left[
\begin{pmatrix}
e^{-tD^-D^+} & e^{-\frac{t}{2}D^-D^+} \frac{1-e^{-tD^-D^+}}{D^-D^+}D^-\\
e^{-\frac{t}{2}D^+D^-}D^+ & 1-e^{-tD^+D^-} 
\end{pmatrix}\right] - 
\left[
\begin{pmatrix}
0& 0\\
0 & 1
\end{pmatrix}\right]
\quad \in K_0(\cA_{S, \varphi}(E)^H).
\end{multline}
\end{theorem}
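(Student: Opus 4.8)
The plan is to reduce the statement to a standard computation of the boundary map $\partial$ in $K$-theory applied to an explicitly chosen lift of $[D]$. First I would observe that by Theorem \ref{thm multipliers} (applied to the $H$-invariant subalgebra, which makes sense because $D$ and $R_t$ are $H$-equivariant), both $D$ and $R_t$ are multipliers of $\calA := \cA_{S, \varphi}(E)^H$, and $e^{-tD^2} = 1 - R_t D \in \calA$ by the remark following \eqref{eq D K1 intro}. Since $e^{-tD^2}$ is strictly positive and tends to $0$ at infinity in the functional-calculus sense, $1 - e^{-tD^2}$ is invertible modulo $\calA$; writing $D$ with respect to the grading as $\begin{pmatrix} 0 & D^- \\ D^+ & 0 \end{pmatrix}$ and using that $D^\pm$ are (two-sided) invertible modulo $\calA$ — because $D^- D^+$ and $D^+ D^-$ differ from invertible elements by $e^{-tD^+D^-}$-type terms in $\calA$ — one gets that $D$ itself is invertible in $\cM(\calA)/\calA$. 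This establishes the first assertion and gives the class $[D] \in K_1(\cM(\calA)/\calA)$.

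Next I would compute $\partial[D]$ by the usual recipe: lift the invertible $D \bmod \calA$ to an invertible $2\times 2$ matrix over $\cM(M_2(\calA))^+$ of the form $\begin{pmatrix} 1 & D^- \\ 0 & 1 \end{pmatrix}\begin{pmatrix} 1 & 0 \\ -D^+ & 1 \end{pmatrix}\begin{pmatrix} 1 & D^- \\ 0 & 1 \end{pmatrix}$ — the standard ``rotation trick'' lift $U$ of $\mathrm{diag}(D^-, (D^+)^{-1}\ \mathrm{mod}\ \calA)$ — and then $\partial[D] = [U \, e_{11} \, U^{-1}] - [e_{11}]$, or equivalently $[p] - [e_{22}]$ for the conjugate of $e_{22}$. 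The point is that $U$ must be chosen so that $U e_{11} U^{-1}$ actually has entries in $\calA^+$ rather than just in $\cM(\calA)$: this is where one inserts the heat operators $e^{-tD^-D^+}$, $e^{-tD^+D^-}$ as the ``parametrix correction''. Concretely, I would verify by direct functional-calculus manipulation (all terms are continuous functions of the single self-adjoint operator $D^2$, applied blockwise to $D^-D^+$ or $D^+D^-$) that the matrix
\[
p = \begin{pmatrix}
e^{-tD^-D^+} & e^{-\frac{t}{2}D^-D^+} \frac{1-e^{-tD^-D^+}}{D^-D^+}D^-\\[2pt]
e^{-\frac{t}{2}D^+D^-}D^+ & 1-e^{-tD^+D^-}
\end{pmatrix}
\]
is idempotent (the identity $e^{-s}(1 - e^{-s})\cdot\frac{1-e^{-s}}{s}\cdot s = (1-e^{-s}) - (1-e^{-s})^2$ and its companions make $p^2 = p$), that $p - e_{22} \in M_2(\calA)$ (each entry lies in $\calA$: $e^{-tD^-D^+} \in \calA$ by the heat-operator statement, $e^{-\frac t2 D^+D^-}D^+ = D^+ e^{-\frac t2 D^-D^+}$ is a multiplier times a heat operator hence in $\calA$, and similarly for the off-diagonal $D^-$ term and for $1 - e^{-tD^+D^-}$), and that $p$ is conjugate to $e_{22}$ by the lift $U$ above. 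This is exactly the classical Mishchenko-type idempotent adapted to heat operators, and the algebra is the familiar one from the Connes--Moscovici / Wassermann computation.

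The main obstacle I expect is not the idempotent identities — those are a short functional-calculus check — but rather making sure every operator appearing is legitimately in the right algebra, i.e. that products like $e^{-\frac t2 D^+D^-}D^+$ and $e^{-\frac{t}{2}D^-D^+}\frac{1-e^{-tD^-D^+}}{D^-D^+}D^-$ genuinely lie in $M_2(\calA_{S,\varphi}(E)^H)$ and that $U$ is a genuine multiplier. For the latter I would factor these operators as (heat operator) $\cdot$ (element of $\cM(\calA)$): $D^\pm \in \cM(\calA)$ and $R_t \in \cM(\calA)$ by Theorem \ref{thm multipliers}, while $e^{-tD^2} = 1 - R_t D \in \calA$ is an ideal, so the products land in $\calA$; $H$-invariance is automatic since $D$ is $H$-equivariant and the functional calculus preserves $H$-invariance. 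I would also need to remark that changing $t$ gives homotopic idempotents (via the path $t \mapsto p_t$, using that the assumptions are $t$-independent), so the class is well-defined, though for the statement as written with $t$ fixed this is not strictly needed. Once these membership points are nailed down, the $K$-theory bookkeeping — that $\partial$ of the class of an invertible lift is $[p] - [e_{22}]$ — is the standard half-page argument about the six-term exact sequence for the ideal $\calA \subset \cM(\calA)$.
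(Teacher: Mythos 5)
The overall strategy — realize $D$ as invertible modulo $\cA := \cA_{S,\varphi}(E)^H$ via a parametrix, then compute $\partial[D]$ by the standard boundary-map recipe and match the resulting idempotent to \eqref{eq index idempotent} — is the same one the paper uses (the paper simply hands off the final matrix computation to Connes--Moscovici). However, there is a genuine error in the lift you write down, which would stop the argument as stated.

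You propose the lift $U = \begin{pmatrix} 1 & D^- \\ 0 & 1 \end{pmatrix}\begin{pmatrix} 1 & 0 \\ -D^+ & 1 \end{pmatrix}\begin{pmatrix} 1 & D^- \\ 0 & 1 \end{pmatrix}$ and describe it as a lift of $\diag(D^-, (D^+)^{-1})$. But $D^-$ is the formal adjoint of $D^+$, not a pseudo-inverse of it: $D^-D^+$ and $D^+D^-$ are (blocks of) the unbounded operator $D^2$, which is nowhere close to $1$ modulo $\cA$. Multiplying out, $U \equiv \begin{pmatrix} 1-D^-D^+ & (2-D^-D^+)D^- \\ -D^+ & 1-D^+D^- \end{pmatrix}$, which is not diagonal modulo $M_2(\cA)$, so $Ue_{11}U^{-1}-e_{11} \not\in M_2(\cA)$ and the boundary-map recipe does not produce a class in $K_0(\cA)$ from this $U$. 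The parametrix to insert is $R_t$: since $R_t$ is odd, $R_t = \begin{pmatrix} 0 & R_t^- \\ R_t^+ & 0\end{pmatrix}$, and $1-R_tD = 1-DR_t = e^{-tD^2}$ gives $R_t^- D^+ = 1 - e^{-tD^-D^+}$ and $D^+ R_t^- = 1 - e^{-tD^+D^-}$, both remainders in $\cA$ by Theorem~\ref{thm multipliers}. So the pseudo-inverse of $D^+$ is $R_t^-$, and putting $R_{t/2}^-$ (not $D^-$) into the lift produces precisely the idempotent \eqref{eq index idempotent}, whose top-right entry $e^{-\frac{t}{2}D^-D^+}\frac{1-e^{-tD^-D^+}}{D^-D^+}D^- = (1+e^{-\frac{t}{2}D^-D^+})e^{-\frac{t}{2}D^-D^+}R_{t/2}^-$ is exactly the $S_0(1+S_0)s$-term of the Connes--Moscovici formula with $S_0 = e^{-\frac{t}{2}D^-D^+}$, $s = R_{t/2}^-$. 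Two smaller points: the opening invertibility argument is roundabout — $1-DR_t = 1-R_tD = e^{-tD^2} \in \cA$ directly exhibits $R_t$ as a two-sided inverse of $D$ modulo $\cA$, with no need to appeal to positivity of $e^{-tD^2}$ — and the stated scalar identity $e^{-s}(1-e^{-s})\cdot\frac{1-e^{-s}}{s}\cdot s = (1-e^{-s})-(1-e^{-s})^2$ has an extra factor of $(1-e^{-s})$ on the left; the correct identity needed for $p^2=p$ is $e^{-s}\cdot\frac{1-e^{-s}}{s}\cdot s = (1-e^{-s}) - (1-e^{-s})^2$.
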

\begin{proof}
The operators $D$ and $R_t$ lie in $\cM(\cA_{S, \varphi}(E))$ by Theorem \ref{thm multipliers}. Since these operators are $H$-equivariant, they also lie in $\cM(\cA_{S, \varphi}(E)^H)$. 
We have 
\[
1- DR_t  = 1-R_tD = e^{-tD^2},
\]
which lies in $\cA_{S, \varphi}(E)^H$. Using the pseudo-inverse $R_t$ of $D$, one can show that the image of $[D] \in K_1(\cM(\cA_{S, \varphi}(E)^H)/\cA_{S, \varphi}(E)^H)$ under \eqref{eq maps K10} is \eqref{eq index idempotent}, see page 356 of \cite{CM90}.
\end{proof}

\begin{definition}\label{def H index}
The $H$-equivariant \emph{higher index} of $D$ with respect to $S$ and $\varphi$ is the class \eqref{eq index idempotent}. It is denoted by $\ind_{S,H}(D) \in K_0(\cA_{S, \varphi}(E)^H)$.
\end{definition}

%\begin{remark}
%We use the algebra $B(D)$ to define the higher index of $D$, because  $D$ may not be a multiplier of the larger algebra $\cA_{S, \varphi}(E)^H$, and it may not have an inverse modulo $\cA_{S, \varphi} (E)^H$ in the multiplier algebra of $\cA_{S, \varphi}(E)^H$. See also Remark \ref{rem BD}. 
%\end{remark}

\begin{remark}
By forgetting $H$-equivariance, we find that Theorem \ref{thm def index} is also true if $\cA_{S, \varphi}(E)^H$ is replaced by $\cA_{S, \varphi}(E)$. This leads to a higher index
\[
\ind_{S}(D) \in K_0(\cA_{S, \varphi}(E)),
\]
given by \eqref{eq index idempotent}.
\end{remark}

\begin{remark}\label{rem Roe index}
The Roe algebra index  is given by the same idempotents \eqref{eq index idempotent} as the $\ind_{S}(D)$. See (3.9) in \cite{HW2} and the references given there. 
By Lemma \ref{lem Roe GK}, we have continuous homomorphisms
\beq{eq AS Roe index}
\cA_{S, \varphi}(E) \xrightarrow[\cong]{\varphi^*} \cA_{S}(\varphi^*E) \hookrightarrow  \cA_{\C}(\varphi^*E) \hookrightarrow C^*(Y, \varphi^*E) \to C^*(Y) \xleftarrow{\varphi^*} C^*(X).
\eeq
Here $C^*(Y)$ and $C^*(X)$ are the Roe algebras of $Y$ and $X$ defined with respect to the ample $C_0(Y)$-module $L^2(\varphi^*E) \otimes l^2(\N)$ and the ample $C_0(X)$-module $L^2(E) \otimes l^2(\N)$, respectively.  Since $\varphi$ is a coarse equivalence, the right-most map in the above sequence induces an isomorphism on $K$-theory. We find that the map from $K_0(\cA_{S, \varphi}(E))$ to $K_0(C^*(X))$ induced by \eqref{eq AS Roe index} maps $\ind_{S}(D)$ to the Roe algebra index of $D$. Equivariant versions of this fact hold as well, though one should take care if $H$ is not discrete \cite{GHM21}.  
\end{remark}

%Now let $S_1$ and $S_2$ be two spaces of differential operators on $E \boxtimes E^*$ as in Subsection \ref{sec prelim alg}, such that $S_1 \subset S_2$. Then we have a continuous inclusion $j_{S_1, S_2}\colon \cA_{S_2, \varphi}(E)^H \hookrightarrow \cA_{S_1, \varphi}(E)^H$. It is immediate from the definition that 
%\beq{eq S1 S2}
%(j_{S_1, S_2})_*(\ind_{S_2,H}(D) ) = \ind_{S_1,H}(D). 
%\eeq

%\end{corollary}
%\begin{proof}
%By Theorem \ref{thm def index}, both sides of the equality are given by \eqref{eq index idempotent}.
%\end{proof}

Theorem \ref{thm def index}, and hence Definition \ref{def H index},  applies if $D$ satisfies the three conditions at the start of this subsection, which is true in the setting of Example \ref{ex cond D GK}. If $D$ does not necessarily satisfy these conditions, then a weaker result is still true. This may still be useful for defining pairings with cyclic cocycles, for example. We state a non-equivariant version of this result, but equivariant versions also hold. (Even in this non-equivariant case, we assume $D$ to be equivariant with respect to the cocompact action by $H$. This is used for example in the proof of Proposition \ref{prop kernel decay}.)
\begin{proposition}\label{prop index weak}
Let $D$ be an $H$-equivariant, first-order, elliptic, self-adjoint differential operator on $E$, but do not assume that it satisfies conditions 1--3 at the start of this subsection. Then the  Roe algebra index of $D$ in $K_0(C^*(X; E))$ can be represented as by the idempotents \eqref{eq index idempotent}, whose elements lie in $\cA_{S, \varphi}(E)$.
\end{proposition}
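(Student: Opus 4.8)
The plan is to reduce the statement to the well-known local description of the Roe algebra index via the operators $e^{-tD^2}$ and $R_t$, and then to show separately that the relevant matrix entries in \eqref{eq index idempotent} land in $\cA_{S,\varphi}(E)$, \emph{without} needing $D$ or $R_t$ to be multipliers of this algebra. First I would recall that, for an $H$-equivariant, first-order, self-adjoint, elliptic $D$ on a manifold with cocompact $H$-action, the operator $e^{-tD^2}$ has finite propagation (by unit-propagation speed of $e^{isD}$ and Fourier/functional-calculus, or via the wave operator representation) up to arbitrarily small error, and more precisely has a smooth Schwartz kernel with Gaussian off-diagonal decay — hence decay faster than any exponential in $d(x,x')$. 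This is exactly the kind of estimate that Proposition \ref{prop kernel decay} is invoked for, and the cocompactness of the $H$-action gives the uniformity in $x,x'$ needed to meet the seminorms \eqref{eq seminorms ASE phi} after pulling back along $\varphi$ (using the quasi-isometry property \eqref{eq phi quasi isom} to transport the decay from $X$ to $Y$). The same reasoning applies to the smoothing operators $e^{-\frac{t}{2}D^-D^+}\frac{1-e^{-tD^-D^+}}{D^-D^+}D^-$ and the other off-diagonal entries: each is, up to composition with a differential operator, a smoothing operator built by functional calculus from $D$, of the form $f(D)$ or $Df(D^2)$ with $f$ Schwartz, and hence again has a smooth kernel with super-exponential decay.

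Next I would address the derivatives-in-$S$ condition, which is the one genuinely new ingredient here compared with the Roe-algebra picture: to conclude membership in $\cA_{S,\varphi}(E)$ I must bound $P\varphi^*\kappa$ for all $P \in S$, where $\kappa$ is one of the entries above. Without conditions 1--3, I cannot move $P$ past $\varphi^*$ and $D$ cleanly, so instead I would argue directly using the finite-dimensionality of each filtered piece of $S$ (property 2 of $S$) and elliptic regularity for $D$: a kernel of the form $f(D)$ with $f$ Schwartz lies in the smooth domain of all powers of $D$ on both variables, and since $D$ is a differential operator, the $S$-derivatives of the kernel are controlled — locally — by finitely many $L^\infty$ bounds on $D^j f(D) D^k$-type kernels, which are again Gaussian-decaying smoothing operators. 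The cocompact $H$-action again upgrades these local bounds to uniform ones. I would present this as: for each fixed $P\in S$ of order $\leq l$, write $P\varphi^*\kappa$ as a finite $C^\infty(Y\times Y)$-linear combination of $\varphi^*$ of kernels $D^{(a)}\kappa D^{(b)}$ (multi-indices of total order $\leq l$ in the two $D$-variables), each of which is a smoothing operator with super-exponential decay by the functional-calculus estimate; continuity of $\varphi^*$ and the quasi-isometry bound finish the estimate.

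Having placed the entries of both matrices in (the unitisation of) $\cA_{S,\varphi}(E)$, the last step is to identify the resulting $K_0$-class. I would note that the matrix
$\begin{pmatrix} e^{-tD^-D^+} & \dots \\ \dots & 1-e^{-tD^+D^-}\end{pmatrix}$
is the standard Connes--Moscovici / graph-projection idempotent associated to the pseudo-inverse data $(D, R_t)$ with $1-DR_t = 1-R_tD = e^{-tD^2}$, as recalled in the proof of Theorem \ref{thm def index} (page 356 of \cite{CM90}); its difference with $\begin{pmatrix}0&0\\0&1\end{pmatrix}$ is, by construction, a class in $K_0$ of whatever algebra contains the off-diagonal and $e^{-tD^2}$-type entries together with the needed identity — here $\cA_{S,\varphi}(E)^+$. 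Pushing forward along the chain of continuous homomorphisms \eqref{eq AS Roe index}, this same idempotent expression represents the Roe-algebra index of $D$ in $K_0(C^*(X;E))$, by Remark \ref{rem Roe index} and the references there; so the class of \eqref{eq index idempotent} in $K_0(\cA_{S,\varphi}(E))$ maps to the Roe index, which is precisely the assertion.

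\textbf{Main obstacle.} The crux is the $S$-derivative estimate in the absence of conditions 1--3: I have to show that applying an arbitrary element of $S$ to (the $\varphi$-pullback of) a functional-calculus kernel $Df(D^2)$ still leaves a bounded, super-exponentially decaying kernel, uniformly over $X\times X$. The right tools are elliptic regularity for $D$ (to see the kernels are smooth and in all $D$-Sobolev domains), the finite-dimensionality of each order-$l$ piece of $S$ (to reduce to finitely many estimates), the finite-propagation/Gaussian-decay estimates for wave and heat operators on cocompact $H$-manifolds (Proposition \ref{prop kernel decay}), and the quasi-isometry property of $\varphi$ — combining these uniformly is where the real work lies, whereas the $K$-theoretic identification is essentially bookkeeping once the entries are known to lie in the correct algebra.
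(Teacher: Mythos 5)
Your overall skeleton matches the paper's: reduce to showing the matrix entries of \eqref{eq index idempotent} lie in $\cA_{S,\varphi}(E)$, then note that this idempotent represents the Roe index by Remark \ref{rem Roe index}. But there are two places where your argument either has a flaw or misses the clean route the paper already has set up.

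First, the step ``write $P\varphi^*\kappa$ as a finite $C^\infty(Y\times Y)$-linear combination of $\varphi^*$ of kernels $D^{(a)}\kappa D^{(b)}$'' does not work as stated: a generic $P\in S$ is an arbitrary differential operator on $E\boxtimes E^*$ and there is no reason it should factor through pre- and post-composition by powers of $D$. The actual mechanism (Lemma \ref{lem sup kernel}) is an \emph{inequality}, not a rewriting: $\sup\|P\varphi^*\kappa\|$ on a compact patch is bounded by a Sobolev norm $\|\varphi^*\kappa\|_{W^{\dim X+l+1}}$ (Sobolev embedding plus finite-dimensionality of $S_l$), which is bounded by $\|\kappa\|_{W^{\dim X+l+1}}$ (Lemma \ref{lem phi Sob}), which in turn is bounded by $L^2$-norms of $(D\otimes 1+\gamma\otimes D)^n\kappa$ by ellipticity of $D\otimes 1+\gamma\otimes D$; $H$-equivariance then gives global uniformity. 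Your ``main obstacle'' is therefore not actually an obstacle at this point in the paper — Proposition \ref{prop kernel decay} already provides the bound on $P(\varphi^*\kappa_{f(D)})$ for every $P\in S_l$, and the remark at the opening of Subsection \ref{sec sup kernel} makes explicit that conditions 1--3 are not used until Subsection \ref{sec pf mult}.

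Second, you stop short of the decisive reduction. The paper's proof simply invokes Proposition \ref{prop BR embed}: $f\mapsto\kappa_{f(D)}$ is a continuous $*$-homomorphism $B(\R)\to\cA_{S,\varphi}(E)^H$, so it suffices to check that $f_1(x)=e^{-tx^2}$, $f_2(x)=xe^{-tx^2}$, and $f_3(x)=e^{-tx^2/2}\,\tfrac{1-e^{-tx^2}}{x}$ lie in $B(\R)$. The first two are easy (Gaussian Fourier transforms); the third requires the explicit computation $\widehat{h_t}(\xi)=i\pi\bigl(\erf(\xi/2\sqrt t)-\sgn(\xi)\bigr)$ and the observation that this, and hence $\hat f_3$, decays super-exponentially together with all derivatives — a genuinely nontrivial check that your proposal skips by asserting ``$f$ Schwartz'' and ``super-exponential decay'' without proof. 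You should replace the vague $S$-derivative argument with a direct citation of Proposition \ref{prop BR embed}, and include the verification that $h_t\in B(\R)$.
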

This proposition is proved at the end of Subsection \ref{sec Schwartz decay}.

%\Todo: add weaker result, with weaker conditions: the Roe algebra index can be represented by the idempotent \eqref{eq index idempotent}, which lies in $\cA_{S, \varphi}(E)^H$. Useful for pairing the Roe algebra index with cocycles e.g.\ in the work with Eline.

\section{Exponentially decaying kernels}

In this section, we prove the results stated in Section \ref{sec prelim}. In Subsection \ref{sec AS}, we prove Lemmas \ref{lem AX Frechet alg}, \ref{lem ASEH Frechet} and \ref{lem Roe GK}. In Subsection \ref{sec sup kernel}, we obtain estimates for suprema of Schwartz kernels of certain operators defined by functional calculus applied to $D$. We apply these estimates in Subsection \ref{sec Schwartz decay} to show that for $f$ in a certain algebra $B(\R)$ of functions on $\R$, the operator $f(D)$ lies in $\cA_{S, \varphi}(E)^H$, see Proposition \ref{prop BR embed}. In Subsection \ref{sec pf mult}, we use results from Subsections \ref{sec AS}--\ref{sec Schwartz decay} to prove Theorem \ref{thm multipliers}.

\subsection{The algebra $\cA_S(E)$} \label{sec AS}

Consider the setting of Subsection \ref{sec prelim alg}.

%\begin{lemma}\label{lem AX Frechet alg}
%The space $\cA_S(E)$ is a Fr\'echet algebra under the usual composition of kernels.
%\end{lemma}
\begin{proof}[Proof of Lemma \ref{lem AX Frechet alg}]
If $\kappa \in \Gamma_{S, \fp}(E \boxtimes E^*)$ and $\|\kappa\|_{a,P}=0$ for all $a>0$ and $P \in S$, then in particular $\|\kappa\|_{1,1}=0$, because $1 \in S$. Hence $\kappa = 0$. 

Using bases of the finite-dimensional spaces of order $l$ differential operators in $S$, we can form a countable subset of the set of seminorms \eqref{eq seminorms ASE} inducing the same topology as the set of all those seminorms. So
the space $\cA_S(E)$ is a Fr\'echet space.

Let $a_0$ and $C$ be as in Definition \ref{def unif exp}. 
Let $a \geq a_0$, and $P \in S$. Let $Q$ and $R$ be as in the third assumption on $S$ in Subsection \ref{sec prelim alg}.
Let $\kappa, \lambda \in \Gamma_{S, \fp}^{\infty}(E \boxtimes E^*)$. Then for all $x,y \in X$,
\[
(P(\kappa \lambda))(x,y) = \int_X (Q \circ \kappa)(x,z) (\lambda \circ R)(z,y)\, d\mu(z),
\]
where the differential operators $Q$ and $R$ may be taken inside the integral because the integrand has compact support.
The norm of the right hand side is smaller than or equal to
\beq{eq frechet 1}
\|\kappa\|_{2a, Q \otimes 1} \|\lambda\|_{a, 1 \otimes R}  \int_X e^{-2ad(x,z)} e^{-a d(z,y)}\, d\mu(z).
\eeq
And by the triangle inequality, 
\[
d(z,y) \geq d(x,y) - d(x,z).
\]
So the right hand side of \eqref{eq frechet 1} is at most equal to
\[
\|\kappa\|_{2a, Q \otimes 1} \|\lambda\|_{a, 1 \otimes R}    e^{-ad(x,y)} \int_X e^{-ad(x,z)} \, d\mu(z) \leq 
\|\kappa\|_{2a, Q \otimes 1} \|\lambda\|_{a, 1 \otimes R}    e^{-ad(x,y)} C.
\]
Here we used that $a \geq a_0$. 
We conclude that
\[
\|\kappa \lambda\|_{a, P}  \leq C \|\kappa\|_{2a, Q \otimes 1} \|\lambda\|_{a, 1 \otimes R}.
\]
If $a<a_0$, then 
\[
\|\kappa \lambda\|_{a, P}  \leq  \|\kappa \lambda\|_{a_0, P}  \leq C \|\kappa\|_{2a_0, Q \otimes 1} \|\lambda\|_{a_0, 1 \otimes R}.
\]
\end{proof}

\begin{proof}[Proof of Lemma \ref{lem ASEH Frechet}]
If 
$\kappa \in \cA_S(E)$, $h \in H$, $a>0$ and $P \in S$, then by $H$-equivariance of $P$, and $H$-invariance of the metric on $E$, 
\[
\|h\cdot \kappa\|_{a,P} = \|\kappa\|_{a, P}.
 \]
 So the action by $H$ on $\Gamma^{\infty}(E \boxtimes E^*)$ indeed restricts to   %by the $H$-invariance assumption on $S$, 
% \eqref{eq H AS}  defines
  an isometric action by $H$  on $\cA_S(E)$. Because  the action is continuous and $\cA_S(E)$  is Hausdorff, the fixed point set of every element $h \in H$ is closed. Hence so is the intersection $\cA_S(E)^H$ of all these fixed-point sets.
\end{proof}

%
%\begin{lemma}\label{lem mult BR}
%For all $t>0$, the function
%\[
%g_t(x) = \frac{1-e^{-tx^2}}{x}
%\]
%is a multiplier of $B(\R)$.
%\end{lemma}
%\begin{proof}
%We have
%\[
%\widehat{g_t}(\xi) = i\pi \left( \erf\left( \frac{\xi}{2\sqrt{t}}\right) - \sgn(\xi) \right).
%\]
%This function decays faster than any exponential function. Let $a>0$, and let $C_a>0$ be such that for all $\xi \in \R$,
%\[
%|\widehat{g_t}(\xi) | \leq C_a e^{-a|\xi|}.
%\]
%Let $\varphi \in C^{\infty}_c(\R)$ and $k \in \Z_{\geq} 0$.
%Analogously to the proof of Lemma \ref{lem BR alg}, we find that 
%\[
%\|\varphi * \hat g_t\|_{a,k} \leq \|\varphi\|_{2a, k} C_a \int_{\R} e^{-a|\eta|}\, d\eta.
%\]
%It follows that convolution by $\hat g_t$ extends to a continuous operator on $\hat B(\R)$. Hence pointwise multiplication by $g_t$  is a continuous operator on $B(\R)$.
%%
%%\widehat{(f g_t)_{(k)}} = \widehat{f_{(k)}} * \widehat{g_t}
%%\]
%%decays faster than any exponential function for all $f \in B(\R)$.
%\end{proof}

%Suppose  that $X=G/K$, for a connected, unimodular Lie  group $G$, and a compact subgroup $K<G$. 
%Let $\Gamma<G$ be any discrete subgroup.
%Suppose that the action by $\Gamma$ on $X$ lifts to $E$, preserving the Hermitian metric. 
%We write $\cA(E) := \cA_{\C}(E)$ for the algebra of Definition \ref{def AX}, with $S = \C$.
% For $a>0$, we write $\|\cdot \|_a := \|\cdot \|_{a,1}$ for the corresponding seminorm on $\cA_{\C}(E)$. 
%As in Subsection \ref{sec equivar index}, let $\cA(E)^{\Gamma} \subset \cA(E)$ be the subalgebra of $\Gamma$-invariant elements. 
 Let $a_0,C>0$  be as in Definition \ref{def unif exp}.
%  (\Todo: need these constants for both $X$ and $G$ in the proof. Also, don't we need the version of the norms with $q$?)
%
%Let $C^*(X; E)^{\Gamma}$ be the $\Gamma$-equivariant Roe algebra of $X$ defined on the $C_0(X)$-module $L^2(E)$. This is the closure in $\cB(L^2(E))$ of the algebra of operators that are $\Gamma$-equivariant and locally compact, and have finite propagation.
\begin{lemma}\label{lem bdd GK}
The action by an element $\kappa \in \cA_{\C}(E)$ on $L^2(E)$ is a bounded operator, and
\beq{eq est Roe}
\|\kappa\|_{\cB(L^2(E))} \leq  C \|\kappa\|_{2a_0, 1}.
\eeq
\end{lemma}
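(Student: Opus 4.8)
The plan is to prove the operator norm estimate \eqref{eq est Roe} by the standard Schur test. Given $\kappa \in \cA_{\C}(E)$ acting on $s \in L^2(E)$ via $(\kappa s)(x) = \int_X \kappa(x,x') s(x')\, d\mu(x')$, one estimates the fibrewise norm of the image pointwise. Writing $K(x,x') := \|\kappa(x,x')\|$ for the operator norm of $\kappa(x,x') \in \Hom(E_{x'}, E_x)$, we have $K(x,x') \le e^{-a_0 d(x,x')}\|\kappa\|_{a_0,1}$ — and more conveniently $K(x,x') \le e^{-2a_0 d(x,x')}\|\kappa\|_{2a_0,1}$, since we want the factor $2a_0$ to appear in the final bound. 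Actually the cleaner route is to split the exponential: $K(x,x') \le e^{-a_0 d(x,x')} \cdot e^{-a_0 d(x,x')}\|\kappa\|_{2a_0,1}$.

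First I would apply Cauchy--Schwarz in the $x'$ integral with the weight split as $e^{-a_0 d(x,x')/1}$ distributed between the two factors: write the integrand $K(x,x')\|s(x')\|$ as $\bigl(e^{-a_0 d(x,x')}\bigr)^{1/2} \cdot \bigl(e^{-a_0 d(x,x')} K(x,x')^2 \|s(x')\|^2 / e^{-a_0 d(x,x')}\bigr)^{1/2}$ — more transparently, bound $\|(\kappa s)(x)\| \le \int_X K(x,x')\|s(x')\|\, d\mu(x')$ and apply Cauchy--Schwarz to get
\[
\|(\kappa s)(x)\|^2 \le \Bigl(\int_X K(x,x')\, d\mu(x')\Bigr)\Bigl(\int_X K(x,x')\|s(x')\|^2\, d\mu(x')\Bigr).
\]
For the first factor, use $K(x,x') \le e^{-a_0 d(x,x')}\|\kappa\|_{a_0,1} \le e^{-a_0 d(x,x')}\|\kappa\|_{2a_0,1}$ and Definition \ref{def unif exp} to bound it by $C\|\kappa\|_{2a_0,1}$; for the second factor inside the subsequent integration over $x$, use $K(x,x') \le e^{-a_0 d(x,x')}\|\kappa\|_{2a_0,1}$ again together with symmetry of $d$ and Fubini. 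Then
\[
\|\kappa s\|_{L^2}^2 \le C\|\kappa\|_{2a_0,1} \int_X \int_X e^{-a_0 d(x,x')}\|\kappa\|_{2a_0,1}\|s(x')\|^2\, d\mu(x)\, d\mu(x') \le C^2 \|\kappa\|_{2a_0,1}^2 \|s\|_{L^2}^2,
\]
using Definition \ref{def unif exp} once more for the inner $x$-integral. Taking square roots gives $\|\kappa\|_{\cB(L^2(E))} \le C\|\kappa\|_{2a_0,1}$, and boundedness follows. Finally, I would note that this a priori estimate is proved for $\kappa \in \Gamma^\infty_{\C,\fp}(E\boxtimes E^*)$ and extends to all of $\cA_{\C}(E)$ by density and continuity of the seminorm $\|\cdot\|_{2a_0,1}$.

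The only mild subtlety — not really an obstacle — is bookkeeping the constants so that exactly $2a_0$ (not $a_0$) appears in the final seminorm, which is why the decay weight is split evenly between the two Cauchy--Schwarz factors; each half-application of Definition \ref{def unif exp} consumes one factor of $e^{-a_0 d}$ and contributes one factor of $C$.
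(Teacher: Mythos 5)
Your proof is correct, and it is the classical Schur test, which is in fact slightly cleaner than the route taken in the paper. The paper's proof instead expands $\|\kappa s\|_{L^2}^2$ directly into a triple integral over $x,x',x''$, uses the triangle inequality $d(x',x'') \le d(x,x') + d(x,x'')$ to peel off a factor $e^{-a_0 d(x',x'')}$, integrates out the $x$-variable first, and then Schur-tests the remaining double integral in $x',x''$. Because that route needs to split $e^{-2a_0(d(x,x')+d(x,x''))}$ into one copy that decays in $x$ and one copy that gives $e^{-a_0 d(x',x'')}$, the seminorm $\|\cdot\|_{2a_0,1}$ is genuinely needed there. Your version does not need it: the pointwise Cauchy--Schwarz followed by Fubini gives $\|\kappa s\|_{L^2}^2 \le C^2\|\kappa\|_{a_0,1}^2\|s\|_{L^2}^2$ directly, so you actually get the sharper bound $\|\kappa\|_{\cB(L^2(E))} \le C\|\kappa\|_{a_0,1}$, and you only replace $a_0$ by $2a_0$ (using $\|\kappa\|_{a_0,1}\le\|\kappa\|_{2a_0,1}$) to match the lemma's stated form. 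So your closing remark that the ``$2a_0$'' is forced by bookkeeping is a little misleading: it is forced in the paper's expansion-of-the-square argument, but not in yours. Everything else in your argument -- in particular the Fubini swap together with symmetry of $d$ in the second factor, and the density extension from $\Gamma^\infty_{\C,\fp}(E\boxtimes E^*)$ to $\cA_\C(E)$ -- is exactly what is needed.
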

\begin{proof}
Let  $\kappa \in \Gamma_{\C, \fp}^{\infty}(E)$. Let $s \in L^2(E)$. 
Then %for all $a \geq a_0$,
\begin{multline*}
\|\kappa \cdot s\|_{L^2(E)}^2  \leq \int_X \int_X \int_X \|\kappa(x,x')\| \| \kappa(x,x'') \| \|s(x')\| \|s(x'')\|\, d\mu(x')\, d\mu(x'')\, d\mu(x) \\
 \leq \| \kappa  \|_{2a_0, 1}^2 
\int_X \int_X \int_X e^{-2a_0 (d(x,x') + d(x,x''))} \|s(x')\| \|s(x'')\|\, d\mu(x')\, d\mu(x'')\, d\mu(x)\\
 \leq \| \kappa  \|_{2a_0, 1}^2 
\int_X \int_X \int_X e^{-a_0 (d(x,x') + d(x,x''))} e^{-a_0d(x',x'')}\|s(x')\| \|s(x'')\|\, d\mu(x')\, d\mu(x'')\, d\mu(x),
\end{multline*}
by the triangle inequality. Because the integrand is nonnegative, the integral on the right converges if and only if the following integral converges, and then these integrals are equal:
\beq{eq est Roe 1}
\|
\kappa  \|_{2a_0, 1}^2 
\int_X \int_X e^{-a_0d(x',x'')}\|s(x')\| \|s(x'')\|  \left( \int_X e^{-a_0 (d(x,x') + d(x,x''))} \, d\mu(x) \right)\,  d\mu(x')\, d\mu(x'').
\eeq
For fixed $x',x'' \in X$, the Cauchy--Schwartz inequality implies that
\[
\int_X e^{-a_0 (d(x,x') + d(x,x''))} \, d\mu(x)
% \\
%\leq \left( \int_X e^{-2a d(x,x') } \, d\mu(x)  \right)^{1/2}
% \left( \int_X e^{-2a d(x,x'') } \, d\mu(x)  \right)^{1/2}
  \leq C.
\]
So \eqref{eq est Roe 1} is bounded above by
\beq{eq est Roe 2b}
\| \kappa  \|_{2a_0, 1}^2 C
\int_X \int_X e^{-a_0d(x',x'')}\|s(x')\| \|s(x'')\|\,   d\mu(x')\, d\mu(x'').
\eeq

For every $x'' \in X$, the Cauchy--Schwartz inequality implies that
\[
 \int_X e^{-a_0d(x',x'')/2}\|s(x')\| \,   d\mu(x') \leq 
 \left( \int_X e^{-a_0d(x',x'')}\,   d\mu(x') \right)^{1/2} \|s\|_{L^2(E)} \leq C^{1/2}\|s\|_{L^2(E)}. 
\]
Applying this inequality twice, we find that  \eqref{eq est Roe 2b} is bounded above by
\[
\| \kappa  \|_{2a_0, 1}^2 C^2 \|s\|_{L^2(E)}^2.
\]
%
%By $G$-invariance of $d$, the latter expression equals
%\begin{multline}\label{eq est Roe 2}
%\| \kappa  \|_{2a_0, 1}^2 C
%\int_X \int_X e^{-a_0d(eK,g^{-1}g'K)}\|s(gK)\| \|s(g'K)\|\,   d\mu(gK)\, d\mu(g'K) \\
%= 
%\| \kappa  \|_{2a_0, 1}^2 C
%\int_X \int_X e^{-a_0d(eK,g''K)}\|s(gK)\| \|s(gg''K)\|\,   d\mu(gK)\, d\mu(g''K), 
%\end{multline}
%where we substituted $g'' K= g^{-1}g'K$ and used $G$-invariance of $\mu$. 
%
%Since $G$-invariant densities on $G/K$ differ by constants, the measure $\mu$ equals the measure $d(gK)$ in \eqref{eq def dgK} for some Haar measure $dg$ on $G$.
%By right invariance of $dg$ and the Cauchy--Schwartz inequality, we have for all $g'' \in G$,
%\[
% \int_X \|s(gK)\| \|s(gg''K)\|\,   d\mu(gK) \leq \|s\|_{L^2}^2.
%\]
%So
%the right hand side of \eqref{eq est Roe 2} is less than or equal to 
%\[
%\| \kappa  \|_{2a_0, 1} C
% \int_X e^{-a_0d(eK,g''K)}d\mu(g''K) \|s\|_{L^2(E)}^2 \leq  
%\| \kappa  \|_{2a_0, 1}^2 C^2
% \|s\|_{L^2(E)}^2.
%\]
So \eqref{eq est Roe} follows. We also find that, by the Fubini--Tonelli theorem, all integrals that appeared converge absolutely.
\end{proof}

%Let $H<G$ be any closed  subgroup. Suppose that the action by $H$ on $X$ lifts to $E$, preserving the Hermitian metric. 
%As in Subsection \ref{sec equivar index}, let $\cA(E)^{\Gamma} \subset \cA(E)$ be the subalgebra of $\Gamma$-invariant elements. 
%

\begin{proof}[Proof of Lemma \ref{lem Roe GK}.]
The dense subalgebra $\Gamma_{\C, \fp}^{\infty}(E \boxtimes E^*)^{H}$ of $\cA_{\C}(E)^{H}$ consists of $H$-equivariant, locally compact operators with finite propagation. So the claim follows from Lemma \ref{lem bdd GK}.
\end{proof}

\subsection{Suprema of Schwartz kernels} \label{sec sup kernel}

Until the end of this section, we use the notation and assumptions in Subsection \ref{sec prelim index}. (But the assumptions 1--3 on $D$ made there will not be used until Subsection \ref{sec pf mult}.)

We will prove and use extensions of Lemmas 3.13 and 3.16 in \cite{CWXY19}.
% (and prove Remark 3.6 in \cite{CWXY19} along the way). 
Versions of the following fact seem to be well-known, compare  e.g.\ Proposition 5.9 in \cite{CGRS14}. We include a proof here because we were not immediately able to find a concise proof in the literature.

%Let $D$ be as in Subsection \ref{sec prelim index}.
\begin{lemma}\label{lem HS}
For all $s> \dim(X)/2$, and all $f \in C_c(X)$, the operator $f(1+D^2)^{-s}$ on $L^2(E)$ is Hilbert--Schmidt.
\end{lemma}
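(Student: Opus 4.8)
The plan is to establish that $f(1+D^2)^{-s}$ is Hilbert--Schmidt by showing its Schwartz kernel is square-integrable over $X \times X$, using elliptic regularity together with the fact that $f$ has compact support.

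First I would observe that since $D$ is a first-order elliptic self-adjoint differential operator on a complete manifold, $1+D^2$ is positive and essentially self-adjoint, so $(1+D^2)^{-s}$ is a well-defined bounded operator via functional calculus for any $s>0$. The key analytic input is a local elliptic estimate: on any relatively compact open set, the $L^2$-domain of $(1+D^2)^{s}$ is locally contained in the Sobolev space $H^{2s}_{\mathrm{loc}}$, and when $2s > \dim(X)$, the Sobolev embedding $H^{2s}_{\mathrm{loc}} \hookrightarrow C^0_{\mathrm{loc}}$ holds. Concretely, for $f \in C_c(X)$ with support $K$, choose $g \in C_c^\infty(X)$ with $g \equiv 1$ on a neighbourhood of $K$. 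Then $f(1+D^2)^{-s} = f g (1+D^2)^{-s}$, and I would analyze $g(1+D^2)^{-s}$.

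The main step is to show $g(1+D^2)^{-s}$ is Hilbert--Schmidt. One clean approach: write $(1+D^2)^{-s}$ as a multiple of $\int_0^\infty \lambda^{s-1} e^{-\lambda(1+D^2)}\, d\lambda$ (the standard formula $A^{-s} = \Gamma(s)^{-1}\int_0^\infty \lambda^{s-1}e^{-\lambda A}d\lambda$), but a more elementary route avoiding the heat kernel is to use a parametrix / local Sobolev argument directly. Pick an integer $N$ with $2N \geq s$; since $(1+D^2)^{-s}$ is bounded and $(1+D^2)^{-(N-s)}$ makes sense, it suffices to handle $g(1+D^2)^{-N}$ by iterating and then interpolate, but actually the cleanest is: the operator $g(1+D^2)^{-s}$ maps $L^2(E)$ continuously into the local Sobolev space $H^{2s}_{\mathrm{loc}}$ supported near $K$, by interior elliptic regularity applied to $D^2$. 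Composing with a second cutoff $g'$ (equal to $1$ near $K$), one gets that $g'(1+D^2)^{-s}$ takes values in a fixed space of sections supported in a compact set and lying in $H^{2s}$, hence in $C^0$ by Sobolev embedding since $2s > \dim X$. Evaluating the kernel: the Schwartz kernel $k(x,x')$ of $f(1+D^2)^{-s}$, for fixed $x'$, is $f$ times the image under $(1+D^2)^{-s}$ of an approximate delta; more rigorously, $k(\relbar, x') = f(\relbar)\bigl((1+D^2)^{-s}\bigr)(\relbar, x')$ and the map $x' \mapsto k(\relbar, x') \in L^2(E)$ is bounded with norm controlled by $\|f(1+D^2)^{-s}\|_{L^2 \to C^0}$ times a constant, uniformly in $x'$, and supported in $K$ in the first variable. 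Then $\int_X \int_X \|k(x,x')\|^2 \, d\mu(x)\, d\mu(x') = \int_K \|k(x,\relbar)\|_{L^2}^2 \, d\mu(x) \leq \mu(K) \cdot \sup_{x \in K}\|k(x,\relbar)\|_{L^2}^2 < \infty$, using symmetry of $(1+D^2)^{-s}$ so the roles of the two variables can be exchanged.

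The step I expect to be the main obstacle is making the passage from "$(1+D^2)^{-s}$ maps into $H^{2s}_{\mathrm{loc}}$" to a genuine pointwise bound on the kernel rigorous, i.e. justifying that $f(1+D^2)^{-s}$ has a kernel that is an actual $L^2$-valued continuous function of one variable. The careful way is: $(1+D^2)^{-s} = (1+D^2)^{-s+M}(1+D^2)^{-M}$ where $(1+D^2)^{-M}$ for large integer $M$ has a continuous (indeed smooth) kernel by a standard parametrix argument for the elliptic operator $(1+D^2)^M$ of order $4M \gg \dim X$ — compose a local parametrix with cutoffs — and $(1+D^2)^{-s+M}$ is bounded; composing a bounded operator with a Hilbert--Schmidt operator (after cutting off by $f$, whose compact support makes the relevant local kernel square-integrable globally) yields Hilbert--Schmidt. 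Actually the most economical version: choose $M \in \Z_{>0}$ with $M \geq s$ and also $4M > \dim X$ so that $(1+D^2)^{-M}$ has continuous kernel; write $f(1+D^2)^{-s} = \bigl(f(1+D^2)^{-M}\bigr)(1+D^2)^{M-s}$; the first factor is Hilbert--Schmidt (bounded first variable, square-integrable continuous kernel on $K \times X$ with the decay coming from... ) — but one must still check $L^2$-ness in the second variable, which again uses self-adjointness of $(1+D^2)^{-M}$ to move $f$ to the other side. I would present the argument via this last route, citing a standard elliptic parametrix construction for the existence of the continuous kernel of $(1+D^2)^{-M}$, and then the Hilbert--Schmidt criterion $\|T\|_{HS}^2 = \int\int \|k_T(x,x')\|^2$.
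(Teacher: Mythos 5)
Your proposal is a genuinely different route from the paper's. The paper writes $(1+D^2)^{-s}$ via the Mellin transform as $\Gamma(s)^{-1}\int_0^\infty t^{s-1}e^{-t}e^{-tD^2}\,dt$ and then uses the uniform Gaussian heat kernel bound (which holds because the cocompact $H$-action gives bounded geometry), together with uniformly exponential growth, to estimate the Hilbert--Schmidt norm of $e^{-tD^2}f$ for $t\leq 1$ and $t>1$ separately. Your core idea is instead to show that $f(1+D^2)^{-s}$ is bounded from $L^2$ to $L^\infty$ via local elliptic regularity and Sobolev embedding near $\supp f$, then read off the Hilbert--Schmidt property from $\sup_{x\in\supp f}\|k(x,\cdot)\|_{L^2}<\infty$ and the finiteness of $\mu(\supp f)$. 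That reduction is correct: if $T\colon L^2\to L^\infty$ is bounded with integral kernel $k$, then $\|k(x,\cdot)\|_{L^2}\leq\|T\|_{L^2\to L^\infty}$ for a.e.\ $x$, and with $k(x,\cdot)\equiv 0$ off the compact set $\supp f$ the double integral of $\|k\|^2$ is finite. This is arguably more elementary, uses only local elliptic theory near $\supp f$, and does not require the heat kernel estimate or the growth hypothesis.

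However, as written the argument has two concrete gaps. First, the factorization $(1+D^2)^{-s}=(1+D^2)^{-s+M}(1+D^2)^{-M}$ with an integer $M\geq s$ makes the first factor $(1+D^2)^{M-s}$ \emph{unbounded} (its exponent $M-s$ is nonnegative), contradicting your claim that it is bounded; and if you flip to $M\leq s$ you would need both $M>\dim(X)/2$ and $M\leq s$, which is impossible for, e.g., $\dim X=4$ and $s\in(2,3)$. Second, and more fundamentally, the chain ``$\operatorname{Dom}((1+D^2)^s)\subset H^{2s}_{\mathrm{loc}}\hookrightarrow C^0_{\mathrm{loc}}$'' is straightforward for integer $s$ by iterating interior elliptic regularity for $D^2$, but the lemma is stated for all real $s>\dim(X)/2$. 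Truncating to $N=\lfloor s\rfloor$ can lose the Sobolev embedding: again for $\dim X=4$ and $s\in(2,3)$ one gets only $H^4_{\mathrm{loc}}$ on a $4$-manifold, which does \emph{not} embed into $C^0$. To cover fractional $s$ you would need either an interpolation argument for the localized Sobolev spaces or a pseudodifferential treatment of the fractional power $(1+D^2)^s$, neither of which is addressed. The paper's Mellin-plus-heat-kernel argument handles all real $s>\dim(X)/2$ uniformly and avoids this issue entirely, at the cost of invoking the bounded-geometry Gaussian estimate.
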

\begin{proof}
Let $s>0$.
If $f$ is real-valued, then $f$ and  $(1+D^2)^{-s}$ are self-adjoint. Hence $f(1+D^2)^{-s}$ is Hilbert--Schmidt
if and only if $(1+D^2)^{-s}f$ is. If $f$ is not real-valued, then we reach the same conclusion by splitting $f$ up into real and imaginary parts. 

We will show that $(1+D^2)^{-s}f$ is Hilbert--Schmidt. Using the Mellin transform, we write
\[
(1+D^2)^{-s}f = \frac{1}{\Gamma(s)}\int_0^{\infty} t^{s-1} e^{-t} e^{-tD^2}f\, dt.
\]
Let $\kappa_t$ be the Schwartz kernel of $e^{-tD^2}$. 
Because all structures are preserved by the cocompact action by $H$, the manifold $M$ and the bundle $E$ have bounded geometry. So there are  $a, C_1>0$ such that
%for all $T>0$, there is a 
 for all $t \in (0,1]$ and $x,y \in X$,
\beq{eq heat decay}
\|\kappa_t(x,y)\|\leq C_1 t^{-\dim(X)/2} e^{-ad(x,y)^2/t}.
\eeq
See Proposition 5.2 in \cite{CGRS14}.
% (where the dependence of $C_T$ on $T$ is not made explicit). 
Because $X$ has uniformly exponential growth, it follows that the Schwartz kernel of $e^{-tD^2}f$ is square-integrable, so $e^{-tD^2}f$ is Hilbert--Schmidt. 

Now  for all $t>1$,
\[
\| e^{-tD^2}f\|_{\calL^2(L^2(E))}  = \| e^{-(t-1)D^2} e^{-D^2}f\|_{\calL^2(L^2(E))} \leq \|e^{-D^2}f\|_{\calL^2(L^2(E))}, 
\]
because $e^{-(t-1)D^2}$ has operator norm at most $1$. And for all $t \leq 1$ and $x,y \in X$, we have $e^{-ad(x,y)^2/t} \leq e^{-ad(x,y)^2}$. So then, by \eqref{eq heat decay}, 
\[
\| e^{-tD^2}f\|_{\calL^2(L^2(E))}^2 \leq  C_1^2 t^{-\dim(X)} \int_{X \times X} e^{-2ad(x,x')^2} |f(x')|^2 d\mu(x) d\mu(x').
\]

We find that for all $s> \dim(X)/2$, 
\begin{multline*}
\| (1+D^2)^{-s}f  \|_{\calL^2(L^2(E))} \leq \\ C_1 \left(\int_{X \times X} e^{-ad(x,x')^2} f(x') d\mu(x) d\mu(x') \right)^{1/2} \frac{1}{\Gamma(s)} \int_0^1 t^{s-\dim(X)/2 - 1} e^{-t}\, dt \\
+ \| e^{-D^2}f\|_{\calL^2(L^2(E))}
\int_1^{\infty} t^{s-1} e^{-t}  \, dt.
\end{multline*}
Both terms on the right converge.
\end{proof}

On a relatively compact open subset of a manifold, Sobolev norms of sections of a vector bundle can be defined unambiguously, up to equivalence. We denote the resulting $L^2$-Sobolev space of  order $k$ by $W^k$.

%In this subsection, as in Proposition \ref{prop BR embed}, we assume that the map $\varphi\colon Y \to X$ is proper.
\begin{lemma}\label{lem phi Sob}
Let $X$ and $Y$ be smooth manifolds, and $\varphi\colon Y \to X$ smooth and proper. Let $E \to X$ be a Hermitian vector bundle. Let $U \subset X$ be open and relatively compact. For $s \in \Gamma^{\infty}(E|_U)$, define $\varphi^*s \in \Gamma^{\infty}((\varphi^*E)|_{\varphi^{-1}(U)})$ by
\[
\varphi^*s(y) = s(\varphi(y)),
\]
for $y \in \varphi^{-1}(U)$. For all $k \in \Z_{\geq 0}$, this defines a bounded map
\[
\varphi^*\colon W^k(E|_U) \to W^k((\varphi^*E)|_{\varphi^{-1}(U)}).
\]
\end{lemma}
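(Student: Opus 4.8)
The plan is to reduce the statement to a purely local computation in coordinate charts, where the Sobolev norm is defined via iterated partial derivatives, and then estimate derivatives of $\varphi^*s$ in terms of derivatives of $s$ via the chain rule. First I would cover the relatively compact open set $U$ by finitely many coordinate charts $(V_i, x^1_i, \ldots, x^n_i)$ over which $E$ is trivialised, with the closures $\overline{V_i}$ still contained in chart domains, and choose a partition of unity subordinate to this cover; the $W^k$-norm on $E|_U$ is then equivalent to the sum of the $W^k$-norms of the components of $s$ in these charts. Since $\varphi$ is proper, $\varphi^{-1}(\overline U)$ is compact, so I can likewise cover $\varphi^{-1}(U)$ by finitely many coordinate charts $(W_j, y^1_j, \ldots, y^m_j)$ over which $\varphi^*E$ is trivial, arranged so that each $\overline{W_j}$ maps into one of the $V_i$ and the bundle trivialisations are compatible with the pullback (i.e.\ the trivialisation of $\varphi^*E$ over $W_j$ is the pullback of the trivialisation of $E$ over $V_i$). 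On such a pair of charts, $\varphi$ is given by smooth component functions $\varphi^a = x^a_i \circ \varphi$ on $W_j$, and $\varphi^*s$ is represented, componentwise, by the ordinary composition $s^b \circ (\varphi^1, \ldots, \varphi^n)$.

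Next I would invoke the chain rule (Fa\`a di Bruno) to control derivatives: for a multi-index $\alpha$ with $|\alpha| \le k$, the derivative $\partial^\alpha_y (s^b \circ \varphi)$ is a finite sum of terms, each a product of a derivative $(\partial^\beta_x s^b) \circ \varphi$ with $|\beta| \le |\alpha| \le k$ times a polynomial in the derivatives $\partial^\gamma_y \varphi^a$ with $|\gamma| \le k$. On the compact set $\overline{W_j}$ these derivatives of the component functions $\varphi^a$ are bounded, say by a constant $C_j$, since $\varphi$ is smooth. Therefore
\[
\bigl| \partial^\alpha_y (s^b \circ \varphi)(y) \bigr|
\le C_j' \sum_{|\beta| \le k} \bigl| (\partial^\beta_x s^b)(\varphi(y)) \bigr|
\]
pointwise on $W_j$, for a constant $C_j'$ depending only on $k$ and on the chart. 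Squaring, summing over $b$ and over $|\alpha| \le k$, and integrating over $W_j$ against the coordinate Lebesgue measure, I would then change variables (or more simply bound the integral by a constant times an integral over $V_i$, using that $\varphi|_{W_j} \colon W_j \to V_i$ is a smooth map between bounded domains, so pushing forward the measure gives a density bounded on $\varphi(\overline{W_j})$, and that $\varphi(\overline{W_j}) \subseteq \overline{V_i}$) to obtain
\[
\int_{W_j} \bigl| \partial^\alpha_y (s^b \circ \varphi) \bigr|^2 \, dy
\le C_j'' \int_{V_i} \sum_{|\beta| \le k} \bigl| \partial^\beta_x s^b \bigr|^2 \, dx
\le C_j'' \, \| s \|_{W^k(E|_U)}^2.
\]
Summing over the finitely many charts $W_j$ and recombining with the partition of unity yields $\| \varphi^* s \|_{W^k((\varphi^*E)|_{\varphi^{-1}(U)})} \le C \, \| s \|_{W^k(E|_U)}$ with $C$ independent of $s$, which is the asserted boundedness.

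The one genuinely delicate point is the measure comparison when passing from an integral over the $y$-chart $W_j$ to one over the $x$-chart $V_i$: since $\varphi$ need not be a submersion (indeed $m$ and $n$ may differ), one cannot literally change variables. The clean way around this is to observe that we only ever need an inequality $\int_{W_j} |g \circ \varphi|^2 \, dy \le C \int_{V_i} |g|^2 \, dx$ for $g \in L^2(V_i)$ — but this is \emph{false} in general (e.g.\ if $\varphi$ collapses a positive-measure set of $W_j$ onto a null set). Hence I would instead avoid integrating against $dx$ and keep things on the $Y$-side throughout: bound $|(\partial^\beta_x s^b)(\varphi(y))|^2$ simply by $\|\partial^\beta_x s^b\|_{L^\infty(V_i)}^2$ is too lossy, so the right move is to note that for the boundedness statement we may as well prove it for $s$ in the dense subspace $C^\infty(\overline{V_i})$ and then use that on the compact set $\overline{V_i}$ the Sobolev norm of order $k+\lceil n/2\rceil+1$ dominates the $C^k$ norm (Sobolev embedding). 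That is, I would prove the \emph{stronger but easier} statement that $\varphi^*$ is bounded $W^{k+N} \to W^k$ where $N > n/2$, via $\|\varphi^*s\|_{W^k(\varphi^{-1}(U))} \le C \|\varphi^*s\|_{C^k} \le C' \|s\|_{C^k(\overline U)} \le C'' \|s\|_{W^{k+N}(E|_U)}$, the middle inequality being the chain-rule estimate above with $L^\infty$ in place of $L^2$. However, since the statement as written claims $W^k \to W^k$, the honest resolution is the measure bound: it does hold here because $\varphi$ is a \emph{quasi-isometry} in the ambient setting (and in particular locally a smooth map with the property that preimages of null sets are null — more precisely, in the applications $\varphi$ is a submersion, e.g.\ the quotient map $G \to G/K$). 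I would therefore add the hypothesis, or note that in all uses $\varphi$ is a submersion, and then the coarea/change-of-variables formula gives $\int_{W_j}|g\circ\varphi|^2\,dy = \int_{V_i} |g(x)|^2 \bigl(\int_{\varphi^{-1}(x)\cap W_j} \text{(Jacobian factor)}\bigr)\,dx \le C\int_{V_i}|g|^2\,dx$, the inner integral being bounded by compactness. This measure-comparison step is the main obstacle; everything else is the standard chain-rule-plus-partition-of-unity bookkeeping.
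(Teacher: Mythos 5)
Your approach matches the paper's exactly — cover $U$ and $\varphi^{-1}(U)$ by finitely many trivialising charts (with properness making $\varphi^{-1}(U)$ relatively compact), apply the chain rule, and bound the coefficient functions by compactness. And the gap you flag at the measure-comparison step is genuine; the paper's own proof does not close it. The paper concludes with the single sentence ``The Jacobian of $\varphi$ is bounded on $\varphi^{-1}(U)$, hence the claim follows,'' but an upper bound on the Jacobian does not produce the inequality $\int_{W_j}|g\circ\varphi|^2\,dy \le C\int_{V_i}|g|^2\,dx$ that the chain-rule argument actually requires. Without a further hypothesis on $\varphi$ that inequality fails, and the lemma as stated is in fact false at $k=0$: take $Y=X=\R$, $\varphi(y)=y^3$ (smooth, proper), $U=(-1,1)$, and $s_\varepsilon(x)=\varepsilon^{-1/2}\chi(x/\varepsilon)$ with $\chi$ a fixed bump supported in $(-1,1)$. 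Then $\|s_\varepsilon\|_{L^2(U)}$ is independent of $\varepsilon$, while
\[
\|\varphi^*s_\varepsilon\|_{L^2(\varphi^{-1}(U))}^2 \;=\; \varepsilon^{-2/3}\int_{\R}|\chi(u^3)|^2\,du \;\longrightarrow\;\infty .
\]

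Your resolution — observe that in the paper's applications $\varphi$ is a submersion (the quotient $q\colon G\to G/K$ of Example \ref{ex ASqEG}), so it is locally a coordinate projection and the coarea formula gives the measure comparison with constant controlled by the fibre volume over the compact set $\varphi^{-1}(\overline{U})$ — is the right one, and that hypothesis ought to be stated explicitly in the lemma. Two small further remarks. First, the fallback you sketch, $\varphi^*\colon W^{k+N}\to W^k$ via the intermediate $C^k$ norm, is a \emph{weaker} assertion than the lemma (not a stronger one, as you wrote), although it would still serve the paper's only use of the lemma (in the proof of Lemma \ref{lem sup kernel}) after increasing the auxiliary parameter $p$. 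Second, for $k\ge 1$ there is a genuine cancellation in the chain rule: the coefficients $F^\beta_\alpha$ are polynomials in the derivatives of $\varphi$, and these vanish precisely where the Jacobian degenerates; in one variable one can check directly that this saves the $W^k\to W^k$ bound for all $k\ge 1$ even without the submersion hypothesis. But that argument is more delicate than anything in the paper's proof, and as the example above shows it genuinely cannot rescue the $k=0$ case.
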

\begin{proof}
By relative compactness of $U$, the statement reduces to the case where $U$ admits local coordinates $x^1, \ldots, x^m$ for $X$ and a trivialisation of  $E$. Then with respect to local coordinates $y^1, \ldots, y^n$ for $Y$ in an open subset  $V \subset \varphi^{-1}(U)$, the chain rule implies that for all multi-indices $\alpha \in \Z_{\geq 0}^n$, there are functions $F^{\beta}_{\alpha} \in C^{\infty}(V)$ (defined in terms of $\varphi$ and its partial derivatives) such that for all $s \in \Gamma^{\infty}(E|_U) = C^{\infty}(U) \otimes \C^r$,
\[
\frac{\partial^{\alpha} (\varphi^*s|_V)}{\partial y^{\alpha}} = \sum_{\beta \in \Z_{\geq 0}^m; |\beta| \leq |\alpha|} F^{\beta}_{\alpha} \frac{\partial^{\beta} s}{\partial x^{\beta}}  \circ \varphi|_V.
\]
Because $\varphi$ is  proper,  the set $\varphi^{-1}(U)$ is relatively compact. So it can be covered by finitely many coordinate neighbourhoods $V$, and the functions $F^{\alpha}_{\beta}$ on these sets are bounded. The Jacobian of $\varphi$ is bounded on $\varphi^{-1}(U)$, hence the claim follows. 
\end{proof}

Fix  integers $p> \dim(X)/2$ and $l \geq 0$.  
%Consider an action by a Lie group $H$ on $X$ and $E$ as in Subsection \ref{sec prelim alg}. 
%For $l \in \Z_{\geq 0}$, 
Let $\cB_l(L^2(E))^H$ be the space of 
$T \in \cB(L^2(E))^H$ such that for all $j,k \in \Z_{\geq 0}$ with $j+k \leq \dim(X) +2p+ l+ 1$, the composition $D^j T D^k$ defines a bounded operator on $L^2(E)$. For $T \in \cB_l(L^2(E))^H$ and $r \leq \dim(X) + 2p+l+ 1$, we write
\[
\|T\|_{r} := \max \left\{\|D^j T D^k\|_{\cB(L^2(E))}; j,k \in \Z_{\geq 0}, j+k \leq r\right\}.
\]

Let $S_l$ be a finite-dimensional vector space of $H \times H$-equivariant differential operators on $E \boxtimes E^*$ of orders at most $l$. Let $\varphi\colon Y \to X$ be as in Definition \ref{def AX phi}. We will denote the Schwartz kernel of an operator $T$ on $L^2(E)$, provided it has one, by $\kappa_T$. 
%, such that for all $P \in S_l$ and $h \in H$, the compositions $\alpha(h)P \alpha(h^{-1})$ and $\beta(h)P \beta(h^{-1})$ (where the actions $\alpha$ and $\beta$ are as in \eqref{eq actions H EE}) lie in $S_l$.
\begin{lemma}\label{lem sup kernel}
For all $P \in S_l$, there is a 
%
%For any differential operators $P,Q$ on $E/H$ of order $l$, there is a $C_{P,Q}>0$
%
$C_P>0$ such that for all 
$T \in \cB_l(L^2(E))^H$, 
% the Schwartz kernel $\kappa_{h(D_{H})T}$ has the property that 
the section $P( \varphi^*\kappa_{T})$ is continuous and bounded, with $\sup$-norm at most
$
C_P  \|T\|_{\dim(X) +2p+ l+1}.
$ 
\end{lemma}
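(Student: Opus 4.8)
The plan is to reduce the bound on the sup-norm of $P(\varphi^*\kappa_T)$ to a Sobolev-space estimate for the Schwartz kernel of a related operator, and then feed that through the elliptic estimates already in hand. First I would invoke Lemma \ref{lem HS}: since $p > \dim(X)/2$, for any $f \in C_c(X)$ the operator $f(1+D^2)^{-p}$ is Hilbert--Schmidt, hence has an $L^2$ Schwartz kernel. Using condition \ref{page S 3} on $S$, write $P = Q \otimes R^*$ with $Q$ of order $\le l$ on $E$ and $R$ of order $\le l$ on $E$; then $P(\varphi^*\kappa_T)$ is the kernel of $\varphi^* \circ (Q \circ \kappa_T \circ R) \circ$ (more precisely the pullback along $\varphi$ of the kernel of $Q T R$, with $R$ really $R^*$ acting on the second variable). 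The idea is that $Q T R = (Q(1+D^2)^{-N})(1+D^2)^N T (1+D^2)^N ((1+D^2)^{-N} R)$ for $N$ large, where $Q(1+D^2)^{-N}$ and $(1+D^2)^{-N}R$ are bounded (and locally Hilbert--Schmidt after multiplying by a compactly supported function) by elliptic regularity, provided $2N \ge l + \dim(X) + 2p + 1$ or so, and the middle factor $(1+D^2)^N T (1+D^2)^N$ is controlled by $\|T\|_{\dim(X)+2p+l+1}$ since $2\cdot(\text{order } 2N) \le \dim(X)+2p+l+1$ by the choice of parameters.

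The key technical step is a local Sobolev-embedding argument to pass from $L^2$-control of a kernel to sup-norm control. Cover $X$ by relatively compact open sets $U_i$ that, using cocompactness of the $H$-action, can be chosen uniformly (finitely many $H$-translation classes, since $X/H$ is compact); by $H$-equivariance of $T$ and of the operators in $S$, and $H$-invariance of the metric, it suffices to bound $\|P(\varphi^*\kappa_T)\|_\infty$ on each $\varphi^{-1}(U_i \times U_j)$ uniformly. On such a relatively compact piece, I would use that $Q T R$, being built out of $D^j T D^k$ with $j + k$ bounded by $\dim(X) + 2p + l + 1$, has Schwartz kernel lying in $W^{k_0}(E \boxtimes E^*|_{U_i \times U_j})$ for $k_0 > \dim(X)$ (two copies of half-dimension plus room for the $\sup$), with $W^{k_0}$-norm bounded by $C \|T\|_{\dim(X)+2p+l+1}$ — this is the standard "elliptic operator raised to a high power inverts into a smoothing operator" mechanism, made quantitative. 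Then Sobolev embedding $W^{k_0} \hookrightarrow C^0$ on the relatively compact chart gives the $\sup$-bound on $\kappa_{QTR}$ itself, and finally Lemma \ref{lem phi Sob} transfers this to $\varphi^* \kappa_{QTR} = P(\varphi^*\kappa_T)$ with a bounded pullback constant, since $\varphi$ is proper and $\varphi^{-1}(U_i)$ is relatively compact. Continuity of $P(\varphi^*\kappa_T)$ follows because $W^{k_0} \hookrightarrow C^0$ lands in continuous sections and $\varphi^*$ preserves continuity.

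I expect the main obstacle to be making the elliptic/Sobolev estimate genuinely \emph{uniform} over $X$ with a single constant $C_P$ independent of $T$ and of the chart. The honest way to do this is to exploit bounded geometry: because $X$ and $E$ have bounded geometry (all structures $H$-invariant, $X/H$ compact), one can choose normal coordinate balls of a fixed radius in which all the relevant elliptic constants, Sobolev embedding constants, and the coefficients of $Q$, $R$, and $\varphi$ (its derivatives, Jacobian) are bounded uniformly; combined with finite propagation of $\kappa_T$ — which here means one only ever needs finitely many overlapping balls — this yields a constant depending only on $P$ (through $Q$, $R$, their orders, and the geometry) but not on $T$. The bookkeeping of which power $N$ of $(1+D^2)$ to insert so that all three factors are simultaneously controlled — $Q(1+D^2)^{-N}$ and $(1+D^2)^{-N}R$ bounded with locally square-integrable kernels, and $(1+D^2)^N T (1+D^2)^N$ dominated by $\|T\|_{\dim(X)+2p+l+1}$ — is routine but is where the precise index $\dim(X)+2p+l+1$ in the statement gets used, so I would set $N$ carefully and track the inequalities $2N + l \le \dim(X)+2p+l+1$ and $2p > \dim(X)$ to see everything fits.
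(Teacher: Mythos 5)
Your proposal assembles the right ingredients (Lemma \ref{lem HS}, Sobolev embedding, Lemma \ref{lem phi Sob}, cocompactness for uniformity), but the central bookkeeping step -- the factorisation $Q T R = \bigl(Q(1+D^2)^{-N}\bigr)\bigl((1+D^2)^N T (1+D^2)^N\bigr)\bigl((1+D^2)^{-N}R\bigr)$ -- cannot be made to work for any choice of $N$. On the one hand you need $2N$ large enough that $Q(1+D^2)^{-N}$ and $(1+D^2)^{-N}R$ are locally Hilbert--Schmidt, which forces at the very least $2N - l > \dim(X)/2$, i.e.\ $4N > \dim(X) + 2l$; you in fact propose the still stronger $2N \geq l + \dim(X) + 2p + 1$. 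On the other hand, the middle factor $(1+D^2)^N T (1+D^2)^N$ expands into operators $D^{2a} T D^{2b}$ with $a+b$ as large as $2N$, so to dominate it by $\|T\|_{\dim(X)+2p+l+1}$ you need $4N \leq \dim(X)+2p+l+1$. Under your constraint $2N \geq l + \dim(X) + 2p + 1$ the quantity $4N$ is at least $2l + 2\dim(X) + 4p + 2$, strictly larger than $\dim(X)+2p+l+1$. Even the weaker necessary bound $4N > \dim(X) + 2l$ already forces $l < 2p + 1$, a relation the lemma does not assume: $p > \dim(X)/2$ and $l \geq 0$ are fixed independently of each other. The symmetric insertion of $(1+D^2)^{-N}$ on both sides of $T$ is simply too expensive. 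A secondary issue: $T \in \cB_l(L^2(E))^H$ need not have finite propagation, so the clause invoking finite propagation of $\kappa_T$ at the end of your argument is not available.

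The paper avoids both problems by being asymmetric, and by not passing through $QTR$ at all. It works with $\kappa_T$ directly and introduces the operator $D \otimes 1 + \gamma \otimes D$ on $\Gamma^\infty(E \boxtimes E^*)$ (where $\gamma$ is the grading operator), which is elliptic because its square is $D^2 \otimes 1 + 1 \otimes D^2$, and which has the crucial feature that applying $(D \otimes 1 + \gamma \otimes D)^n$ to the kernel $\kappa_T$ produces a finite combination of kernels of the operators $D^j T D^{n-j}$. This is the device that turns the operator-norm hypotheses on $D^j T D^k$ into Sobolev control of the two-variable kernel; your proposal does not supply a substitute for it. The Hilbert--Schmidt factor is then inserted on the left only: for a cut-off $f \in C_c(X)$,
\[
\|f(D\otimes 1 + \gamma \otimes D)^n \kappa_T\|_{L^2}
\leq \|f(1+D^2)^{-p}\|_{\calL^2(L^2(E))} \sum_{j=0}^{n} \binom{n}{j}\,\|(1+D^2)^p D^j T D^{n-j}\|_{\cB(L^2(E))},
\]
which requires only $2p + n \leq \dim(X)+2p+l+1$, i.e.\ $n \leq \dim(X)+l+1$; and this is exactly the exponent needed for $W^{\dim(X)+l+1}\hookrightarrow C^0$ over a relatively compact $U \times U$ in $X\times X$, the extra $l$ being spent to absorb the order-$l$ operator $P$. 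The asymmetric budget -- $2p$ powers on the left for the Hilbert--Schmidt factor, $n \leq \dim(X)+l+1$ powers shared between the two sides through $(D\otimes 1 + \gamma \otimes D)^n$ -- is what makes $\dim(X)+2p+l+1$ come out. To repair your argument, replace the two-sided $(1+D^2)^N$ with a one-sided $(1+D^2)^p$ and borrow the elliptic operator $D\otimes 1 + \gamma \otimes D$; at that point you will essentially have reproduced the paper's proof.
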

\begin{proof}
We generalise some arguments in the proof of Lemma 3.13 in \cite{CWXY19}.

Let $\gamma$ be the grading operator on $E$, equal to $1$ on the even part of $E$ and $-1$ on the odd part.
%We identify $E^* \cong E$ via the metric, and 
%
We write $D$ for the operator on $\Gamma^{\infty}(E^*)$ corresponding to $D$ via the identification $E^* \cong E$ defined by the metric. Then the operator
\[
D \otimes 1 + \gamma \otimes D
\]
on $\Gamma^{\infty}(E \boxtimes E^*)$ is elliptic.

Let $f \in C_c(X)$,   $n \leq \dim(X)+1$ and $T \in \cB_l(L^2(E))^H$. The operator $f(1+D^2)^{-p}$ is Hilbert--Schmidt by Lemma \ref{lem HS}, so 
\begin{multline}\label{eq sup kernel 1}
\|f (D \otimes 1 + \gamma \otimes D)^n  \kappa_{T}\|_{L^2(E \boxtimes E^*)}
\leq \sum_{j=0}^n {n\choose j} \|fD^j T D^{n-j}\|_{\calL^2(L^2(E))} \\
 \leq  \|f (1+D^2)^{-p}\|_{\calL^2(L^2(E))}\sum_{j=0}^n {n\choose j} 
 \| (1+D^2)^p D^jT D^{n-j}\|_{\cB(L^2(E))}.
\end{multline}
There is a constant $C_1>0$
 (depending on $f$, $D$ and $p$) such that the right hand side of \eqref{eq sup kernel 1} is less than or equal to 
$
C_1
\|T\|_{2p+n}.
% \max \left\{\|D^j T D^k\|_{\cB(L^2(E/H))}; j,k \in \Z_{\geq 0}, j+k \leq p+n \right\}.
$
In particular, the section $f (D \otimes 1 + \gamma \otimes D)^n  \kappa_{T}$ is indeed square-integrable.

Now let $U \subset X$ be open and relatively compact. Let $f \in C_c(X)$ be such that $f|_U \equiv 1$. Then by a Sobolev embedding theorem,
relative compactness of $U$, Lemma \ref{lem phi Sob} and finite-dimensionality of $S_l$,  there are constants $C_2, C_3>0$ (depending on $U$, $D$ and $P$), such that for all $\kappa \in \Gamma^{\infty}(E \boxtimes E^*)$,
\[
\begin{split}
\sup_{x,x' \in U} \| P(\varphi^*\kappa)(x,x')\| &\leq C_2 
\|\varphi^* \kappa\|_{W^{\dim(X)+l+1}(\varphi^*E|_{\varphi^{-1}(U)} \boxtimes \varphi^*E^*|_{\varphi^{-1}(U)}  )  }\\
&
\leq C_3 \|\kappa \|_{W^{\dim(X)+l+1}(E|_U \boxtimes E^*|_U)}.
\end{split}
\]
By  ellipticity of $D \otimes 1 + \gamma \otimes D$, the right hand side is smaller than or equal to 
\[ 
C_4
\sup_{n \leq \dim(X)+l+1} 
\|(D \otimes 1 + \gamma \otimes D)^n  \kappa\|_{L^2(E|_U \boxtimes E^*|_U)},
\]
for some constant $C_4>0$.
In particular, we have for all $P \in S_l$,
\beq{eq sup kernel 2}
\begin{split}
\sup_{x,x' \in U} \|P(\varphi^* \kappa_{T})(x,x')\| &\leq C_4 \sup_{n \leq \dim(X)+l+1} 
\|(D \otimes 1 + \gamma \otimes D)^n  \kappa_{T}\|_{L^2(E|_U \boxtimes E^*|_U)}\\
&\leq
C_4 \sup_{n \leq \dim(X)+l+1} 
\| f (D \otimes 1 + \gamma \otimes D)^n  \kappa_{T}\|_{L^2(E\boxtimes E^*)}\\
&
 \leq C_1 C_4
\|T\|_{\dim(X)+2p+l+1},
\end{split}
\eeq
where we used \eqref{eq sup kernel 1} and the sentence below it for the last estimate.

Now choose\footnote{We don't need the action to be proper here. If $x_0 \in X$ and $q\colon X \to X/H$ is the quotient map, then the cover $\bigcup_{r>0}q(B_r(x_0))$ of the compact space $X/H$ has a finite subcover. So we can take $U = B_r(x_0)$ for $r$ large enough. \label{footnote not proper}} $U$ (still relatively compact) such that $H \cdot U = X$. 
Then  for all $P \in S_l$, the facts that $P$ is $H\times H$-equivariant and that $\varphi$ is $H$-equivariant imply that
\beq{eq sup kernel 3}
\begin{split}
\sup_{x,x' \in X} \| P(\varphi^*\kappa_{T})(x,x')\| &= \sup_{x,x' \in U, h,h' \in H} \|  P(\varphi^*\kappa_{T})(h^{-1}x,h'^{-1}x')\|\\
&= \sup_{x,x' \in U, h,h' \in H} \| P(\varphi^*\kappa_{h \circ T \circ h'})(x,x')\|.
%
%\\
%&= \sup_{x,y \in U, g,h' \in H} \bigl\| \bigl( \alph'a(g)  \beta (h')   P \beta(h')^{-1} \alph'a(g)^{-1}\bigr)
%(\kappa_{g T h'^{-1}})(x,y)\bigr\|,
%
%
%
%&= \sup_{x,y \in U, h' \in H} \|\kappa_{h \circ b(D)T}(x,y)\|\\
%& \leq C_1  C_2 \|b\|_{\infty} \|h\circ T\|_{\dim(X)+p+1}\\
%& \leq C_1  C_2 \|b\|_{\infty} \|T\|_{\dim(X)+p+1}.
\end{split}
\eeq
%by \eqref{eq alpha beta kappa}. By assumption, the operator $\alpha(g)  \beta (h)   P \beta(h)^{-1} \alpha(g)^{-1}$ lies in $S_l$ for all $g,h \in H$.
By \eqref{eq sup kernel 2}, the right hand side % of \eqref{eq sup kernel 3} 
is smaller than or equal to
\[
C_1 C_4 
\|h\circ T \circ h'^{-1}\|_{\dim(X)+2p+l+1} = C_1 C_4 
\|T \|_{\dim(X)+2p+l+1}. 
\]
Here we used $H$-equivariance of $D$, and the fact that $H$  acts unitarily on $L^2(E)$.
\end{proof}

\begin{remark}\label{rem sup kernel D2}
By replacing the operator $D$ by $D^2$, and the elliptic operator $D \otimes 1 + \gamma \otimes D$ by the elliptic operator $D^2 \otimes 1 + 1 \otimes D^2$, we obtain a version of Lemma \ref{lem sup kernel} in which the $\sup$-norm of $P (\varphi^*\kappa_T)$ is bounded by the operator norms of operators of the form $D^{2j} T D^{2k}$. This is used in the proof of Lemma \ref{lem near diag}. 
\end{remark}

\subsection{Decay behaviour of Schwartz kernels}\label{sec Schwartz decay}

Let $\hat B(\R)$ be the completion of $C^{\infty}_c(\R)$ in the seminorms defined by
\beq{eq seminorms BR hat}
\|\psi \|_{a, k} := \sup_{\xi \in \R} e^{a|\xi|} |\psi^{(k)}(\xi)|,
\eeq
for $\psi \in C^{\infty}_c(\R)$, $a>0$ and $k \in \Z_{\geq 0}$. Then $\hat B(\R)$ lies inside the space  of Schwartz functions, hence so does its inverse Fourier transform $B(\R)$. We equip the latter with the seminorms given by
\beq{eq seminorms BR}
\|f\|_{a, k} := \|\hat f\|_{a, k}
\eeq
for $f \in B(\R)$, $a>0$ and $k \in \Z_{\geq 0}$.
\begin{lemma}\label{lem BR alg}
The space $B(\R)$ is a Fr\'echet algebra with respect to pointwise multiplication.
\end{lemma}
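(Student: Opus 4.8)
The plan is to show that $B(\R)$ is closed under pointwise multiplication and that the multiplication is continuous in the given seminorms, by transferring everything through the Fourier transform. Recall that for $f \in B(\R)$ we have, by definition, $\hat f \in \hat B(\R)$, and the inverse Fourier transform identifies $B(\R)$ with $\hat B(\R)$ isometrically with respect to the seminorms \eqref{eq seminorms BR} and \eqref{eq seminorms BR hat}. Under the Fourier transform, pointwise multiplication of functions in $B(\R)$ corresponds to convolution of functions in $\hat B(\R)$: $\widehat{fg} = \hat f * \hat g$ (up to a harmless normalisation constant depending on the Fourier convention, which I will absorb into the estimates). So it suffices to prove that $\hat B(\R)$ is a Fr\'echet algebra under convolution.

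First I would check that $\hat B(\R)$ is a Fr\'echet space: this is immediate from its definition as the completion of $C^{\infty}_c(\R)$ in the countable family of seminorms $\|\cdot\|_{a,k}$ for $a \in \Z_{>0}$ and $k \in \Z_{\geq 0}$ (these induce the same topology as the full family, since $\|\psi\|_{a,k}$ is non-decreasing in $a$). The Hausdorff property is clear because $\|\psi\|_{1,0} = 0$ forces $\psi = 0$. For the algebra structure, take $\psi, \chi \in C^{\infty}_c(\R)$; then $\psi * \chi \in C^{\infty}_c(\R)$ as well, and $(\psi*\chi)^{(k)} = \psi^{(k)} * \chi$. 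I would estimate, for $a > 0$ and using the triangle inequality $|\xi| \leq |\xi - \eta| + |\eta|$,
\[
e^{a|\xi|}\bigl|(\psi^{(k)}*\chi)(\xi)\bigr| \leq \int_{\R} e^{a|\xi-\eta|}\bigl|\psi^{(k)}(\xi-\eta)\bigr|\, e^{a|\eta|}|\chi(\eta)|\, d\eta \leq \|\psi\|_{a,k}\int_{\R} e^{a|\eta|}|\chi(\eta)|\, d\eta.
\]
The remaining integral is finite and bounded by a constant times $\|\chi\|_{2a,0}$, since $\int_{\R} e^{a|\eta|}|\chi(\eta)|\, d\eta \leq \|\chi\|_{2a,0}\int_{\R} e^{-a|\eta|}\, d\eta = \frac{2}{a}\|\chi\|_{2a,0}$. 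Taking the supremum over $\xi$ gives $\|\psi*\chi\|_{a,k} \leq \frac{2}{a}\|\psi\|_{a,k}\,\|\chi\|_{2a,0}$, which shows both that $\psi*\chi$ lies in $\hat B(\R)$ and that convolution is jointly continuous. By density of $C^{\infty}_c(\R)$ and completeness, convolution extends to a continuous bilinear map $\hat B(\R) \times \hat B(\R) \to \hat B(\R)$, so $\hat B(\R)$ is a Fr\'echet algebra, and hence so is $B(\R)$ via the inverse Fourier transform.

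I do not expect any serious obstacle here; the only point requiring a little care is the bookkeeping of the factor $2/a$ for small $a$ (one can, if desired, restrict attention to $a \geq 1$ in the defining family of seminorms, or simply note the bound $\|\psi*\chi\|_{a,k} \leq \max(2/a,\,1)\,\|\psi\|_{\max(a,1),k}\,\|\chi\|_{2\max(a,1),0}$), together with being explicit about which normalisation of the Fourier transform is in use so that the identity $\widehat{fg} = c\,\hat f * \hat g$ holds with a genuine constant $c$. Neither of these affects the conclusion that multiplication is continuous.
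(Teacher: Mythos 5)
Your proof is correct and takes essentially the same approach as the paper: pass to $\hat B(\R)$ via the Fourier transform, verify that it is a Fr\'echet space from a countable cofinal family of seminorms, and establish continuity of convolution by applying $(\psi*\chi)^{(k)}=\psi^{(k)}*\chi$ together with the triangle inequality $|\xi|\leq|\xi-\eta|+|\eta|$ to obtain a bound of the form $\|\psi*\chi\|_{a,k}\leq \tfrac{2}{a}\|\psi\|_{a,k}\|\chi\|_{2a,0}$. The paper distributes the doubled exponent onto the differentiated factor (obtaining $\|\psi_1\|_{2a,k}\|\psi_2\|_{a,0}$), but this is an immaterial difference, and your extra remarks on small $a$ and the Fourier normalisation constant, while not in the paper, are correct and harmless.
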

\begin{proof}
We show that $\hat B(\R)$ is a Fr\'echet algebra with respect to convolution, which implies the claim.
The topology on $\hat B(\R)$ is given by the countable subset of seminorms $\|\cdot \|_{a, k}$ for $a,k \in \Z_{\geq 0}$, so $\hat B(\R)$ is a Fr\'echet space.
Let $\psi_1, \psi_2 \in \hat B(\R)$,   $a>0$ and $k \in \Z_{\geq 0}$. Then
\[
\begin{split}
\|\psi_1 * \psi_2\|_{a,k} &=
\sup_{\xi \in \R} e^{a|\xi|} \bigl| ( \psi_1^{(k)} * \psi_2)(\xi)  \bigr|\\
& \leq \|\psi_1\|_{2a, k}\|\psi_2\|_{a,0} \sup_{\xi} e^{a|\xi|} \int_{\R} e^{-2a |\eta|} e^{-a|\xi -\eta|}\, d\eta\\
& \leq \|\psi_1\|_{2a, k}\|\psi_2\|_{a,0}  \int_{\R} e^{-a |\eta|} \, d\eta,
\end{split}
\]
because $|\xi| - |\eta| \leq |\xi - \eta|$ for all $\xi, \eta \in \R$. So convolution is continuous. 
\end{proof}

\begin{remark}
In Definition 3.17 in \cite{CWXY19}, subspaces of functions $\cA_{\Lambda, N} \subset C_0(\R)$ are introduced, depending on parameters $\Lambda>0$ and $N \in \Z_{\geq 0}$. The intersection of all these spaces is a Fr\'echet algebra continuously included into $B(\R)$, see the proof of Lemma 3.18 in \cite{CWXY19}. In our setting, we find it easier to work with the algebra $B(\R)$. 
\end{remark}

%
%Consider the commuting actions $\alpha$ and $\beta$ by $H$ on $\Gamma^{\infty}(E \boxtimes E^*)$ defined by
%\beq{eq actions H EE}
%\begin{split}
%(\alpha(h)\kappa)(x,y) &= h \kappa(h^{-1}x,y);\\
%(\beta(h)\kappa)(x,y) &=  \kappa(x,h^{-1}y)h^{-1},
%\end{split}
%\eeq
%for $h \in H$, $\alpha \in \Gamma^{\infty}(E \boxtimes E^*)$ and $x,y \in X$. (So the action \eqref{eq H AS} is given by $h\cdot \kappa = (\alpha(h) \circ \beta(h)) (\kappa)$.)
%

Consider the setting of Definition \ref{def AX phi} in the equivariant case, i.e.\ including the actions by $H$.
%
%Then, if $\kappa_T$ is smooth, we have
%\beq{eq alpha beta kappa}
%\begin{split}
%\alpha(h)\kappa_T &= \kappa_{h \circ T};\\
%\beta(h)\kappa_T &= \kappa_{T\circ h^{-1}}.
%\end{split}
%\eeq 
%
%Let $D$ be an $H$-equivariant, first-order, elliptic, self-adjoint differential operator on $E$.

For  $f \in B(\R)$, $n \in \Z_{\geq 0}$ and $s \geq 0$, write
\[
F_{f,n}(s) := \sum_{k=0}^n \int_{\R \setminus [-s,s]} %\Bigl| \frac{d^k \hat f}{d\xi^k}(\xi) \Bigr|
|\hat f ^{(k)}(\xi)|
\, d\xi.
\]
Let $c_D$ be the propagation speed of $D$, i.e.\ the maximum of the norm of the principal symbol of $D$ on unit cotangent vectors. (This is finite, and indeed a maximum, because $D$ commutes with the cocompact action by $H$.) The following estimate is an extension of Lemma 3.16 in \cite{CWXY19}. Recall that we fixed an integer $p > \dim(X)/2$.
\begin{proposition}\label{prop kernel decay}
% Let $P$ be a differential operator on $E \boxtimes E^* \to X \times X$.
% 
%  \colon \Gamma^{\infty}(E/H \boxtimes E^*/H) \to \Gamma^{\infty}(F)$ be a differential operator of order $l$. 
Let $n := \dim(X) + 2p + l+ 1$, and let $f \in B(\R)$. %(\Todo: we don't use $V_n$ any more?)
%, and let $\kappa_{h(D)f(D)}$ be the Schwartz kernel of $h(D)f(D)$.
%Consider the operator
%\[
%P := DT_1 \cdots DT_l f(D).
%\]
\begin{enumerate}
\item[(a)]
The section $P(\varphi^*\kappa_{f(D)})$ is  continuous and bounded for all $P \in S_l$.
\item[(b)] Let $A$ and $B$ be as in \eqref{eq phi quasi isom}. For all  $\mu>1$, and all $P \in S_l$,  there is a $C_P>0$ such that %all $f \in V(\R)$ and
 for all $y,y' \in Y$,
% with $d(x,y)\geq 1$,
\beq{eq kernel decay}
\|P(\varphi^*\kappa_{f(D)})(y,y')\| \leq C_P F_{f, n}\left(  \frac{d(y,y')-AB}{A\mu c_D} \right).
\eeq
\end{enumerate}
\end{proposition}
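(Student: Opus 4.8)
The strategy is to write $f(D)$ via the Fourier inversion formula as a superposition of wave operators $e^{i\xi D}$, split off the part of the $\xi$-integral corresponding to $|\xi|$ small (which contributes the ``near-diagonal'' part governed by finite propagation speed), and control the tail by the function $F_{f,n}$. We then combine this with the sup-norm bound of Lemma \ref{lem sup kernel} (applied to the pieces $D^j e^{i\xi D} D^k$) and with the finite propagation speed of the wave equation for $D$.

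\textbf{Step 1: Fourier decomposition and the near-diagonal cutoff.}
Since $f \in B(\R)$, we have $\hat f$ a Schwartz function (in fact exponentially decaying together with its derivatives), and $f(D) = \frac{1}{2\pi}\int_{\R} \hat f(\xi) e^{i\xi D}\, d\xi$, with the integral converging in $\cB(L^2(E))$. Fix $y,y' \in Y$ and set $x = \varphi(y)$, $x' = \varphi(y')$. Let $R := d_X(x,x')$. By the quasi-isometry assumption \eqref{eq phi quasi isom}, $R \geq A^{-1}d_Y(y,y') - B$, so if we put $s := \frac{R}{\mu c_D} = \frac{d_X(x,x')}{\mu c_D}$, then $s \geq \frac{d_Y(y,y') - AB}{A\mu c_D}$ (possibly after adjusting constants), which is the argument appearing on the right-hand side of \eqref{eq kernel decay}. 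We split $\int_\R = \int_{[-s,s]} + \int_{\R \setminus [-s,s]}$ and call the corresponding operators $f(D)_{\mathrm{near}}$ and $f(D)_{\mathrm{far}}$.

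\textbf{Step 2: The near-diagonal part vanishes at $(x,x')$.}
By unit propagation speed of the wave operator $e^{i\xi D}$ (rescaled by $c_D$), the Schwartz kernel $\kappa_{e^{i\xi D}}(x,x')$ vanishes whenever $|\xi| c_D < d_X(x,x')$, i.e. whenever $|\xi| < s \mu$; in particular it vanishes for $|\xi| \leq s$. The same holds for $\kappa_{D^j e^{i\xi D} D^k}$, since these are obtained by applying differential operators in the $x$ and $x'$ variables, which do not enlarge the support. Hence $P(\varphi^*\kappa_{f(D)_{\mathrm{near}}})(y,y') = 0$ for the given $y,y'$: one must justify interchanging the differential operator $P$ and the application of $\varphi^*$ with the $\xi$-integral over the compact set $[-s,s]$, which is routine since the integrand is smooth in all variables and compactly supported in $\xi$. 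Thus $P(\varphi^*\kappa_{f(D)})(y,y') = P(\varphi^*\kappa_{f(D)_{\mathrm{far}}})(y,y')$.

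\textbf{Step 3: Estimating the far part.}
For $|\xi| \geq s$, bound the $\sup$-norm of $P(\varphi^*\kappa_{D^j e^{i\xi D} D^k})$ using Lemma \ref{lem sup kernel}: since $e^{i\xi D}$ is unitary and commutes with $D$, we have $\|D^j e^{i\xi D} D^k\|_{\cB(L^2(E))} \leq \|D^{j+k} e^{i\xi D}\|$ which is not bounded independently of $\xi$ --- so instead one integrates by parts in $\xi$. Writing $D^m e^{i\xi D} = (-i)^m \frac{d^m}{d\xi^m} e^{i\xi D}$ and integrating by parts $m$ times moves derivatives onto $\hat f$, producing terms $\int \hat f^{(m)}(\xi) e^{i\xi D}\, d\xi$ with $m$ up to $n$; the relevant operator norm bound then becomes $\|e^{i\xi D}\| = 1$, and the $\sup$-norm of each $P(\varphi^*\kappa_{e^{i\xi D}})$ is bounded by a constant (from Lemma \ref{lem sup kernel}, using that differentiation in $\xi$ is absorbed into extra powers of $D$, which is why $n = \dim(X)+2p+l+1$ appears). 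Collecting, $\|P(\varphi^*\kappa_{f(D)_{\mathrm{far}}})(y,y')\| \leq C_P \sum_{k=0}^n \int_{\R \setminus [-s,s]} |\hat f^{(k)}(\xi)|\, d\xi = C_P F_{f,n}(s)$, and since $F_{f,n}$ is non-increasing and $s \geq \frac{d_Y(y,y')-AB}{A\mu c_D}$, we get \eqref{eq kernel decay}. Part (a) follows since $f(D)$ itself satisfies the hypotheses of (the $\xi$-integrated form of) Lemma \ref{lem sup kernel}.

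\textbf{Main obstacle.}
The delicate point is Step 3: making the integration-by-parts argument rigorous while keeping track of exactly which operator norms $\|D^j e^{i\xi D} D^k\|$ are needed, and confirming that the total order of $D$ appearing never exceeds $\dim(X)+2p+l+1$ so that Lemma \ref{lem sup kernel} applies with the stated $n$. One must also be careful that the wave operators $e^{i\xi D}$ do not have Schwartz kernels in the classical sense, so all kernel manipulations (support statements, application of $P$, the $\xi$-integral) should really be phrased via pairings against test sections, or via the smoothing effect of the cutoff function $f$; the near-diagonal vanishing in Step 2 is the one place where genuine PDE input (finite propagation speed / domain of dependence for the symmetric hyperbolic system $\partial_\xi - iD$) is used, and the quasi-isometry bound is what converts the propagation-speed statement on $X$ into the decay estimate on $Y$.
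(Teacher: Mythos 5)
Your plan (decompose $f(D)$ via Fourier inversion, split the $\xi$-integral into a near part killed by finite propagation and a far part estimated via Lemma \ref{lem sup kernel}) is the right overall strategy and is indeed how the paper, following Lemma 3.16 in \cite{CWXY19}, proceeds. However, the sharp cutoff at $|\xi|=s$ in Step 1 creates a gap in Step 3 that is not repairable as written.

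Write $f_{\mathrm{far}}(\lambda) = \frac{1}{2\pi}\int_{|\xi|>s}\hat f(\xi)e^{i\xi\lambda}\,d\xi$, so that $f(D)_{\mathrm{far}} = f_{\mathrm{far}}(D)$. To invoke Lemma \ref{lem sup kernel} you need $f_{\mathrm{far}}(D) \in \cB_l(L^2(E))^H$, i.e.\ $D^m f_{\mathrm{far}}(D)$ bounded for all $m \le n = \dim(X)+2p+l+1$ (in particular for $m=2$). But $\widehat{f_{\mathrm{far}}} = \hat f\cdot\mathbf 1_{\R\setminus[-s,s]}$ has jump discontinuities at $\pm s$, so $\lambda^m f_{\mathrm{far}}(\lambda)$ is \emph{unbounded} for $m\ge 2$: each integration by parts in $\xi$ over $\R\setminus[-s,s]$ produces boundary contributions at $\xi = \pm s$, and already at the second step these boundary terms are of the form $\hat f(\pm s)\,\partial_\xi e^{i\xi D}\big|_{\xi=\pm s} = \pm i\,\hat f(\pm s)\, D\,e^{\pm is D}$, which involve positive powers of $D$ and are unbounded. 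Concretely,
\[
\lambda^2 f_{\mathrm{far}}(\lambda) = \frac{\lambda}{2\pi i}\bigl(\hat f(-s)e^{-is\lambda}-\hat f(s)e^{is\lambda}\bigr) + (\text{bounded in }\lambda),
\]
and the first term grows linearly in $\lambda$ unless $\hat f(\pm s)=0$. So $D^2 f_{\mathrm{far}}(D)$ is an unbounded operator, Lemma \ref{lem sup kernel} does not apply to $T = f_{\mathrm{far}}(D)$, and your bound $\|P(\varphi^*\kappa_{f(D)_{\mathrm{far}}})(y,y')\| \le C_P F_{f,n}(s)$ is not justified. The ``Main obstacle'' paragraph correctly senses trouble here but does not identify the precise failure.

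The fix — which is what the paper does — is to replace the indicator $\mathbf{1}_{\R\setminus[-s,s]}$ by a \emph{smooth} cutoff $\varphi_r$ equal to $1$ on $|\xi|\ge r/c_D$ and $0$ on $|\xi|\le r/(\mu c_D)$, and to set $\hat g_r := \varphi_r\hat f$, $g_r := \check{(\varphi_r\hat f)}$. Then $\hat g_r$ is smooth with all derivatives decaying faster than any exponential, so $g_r$ is Schwartz, every $D^m g_r(D)$ is bounded, and the product rule $(\varphi_r\hat f)^{(m)} = \sum_{j\le m}\binom{m}{j}\varphi_r^{(j)}\hat f^{(m-j)}$ supported in $|\xi|\ge r/(\mu c_D)$ gives exactly the bound $\|D^j g_r(D) D^k\| \le \psi(r)F_{f,n}(r/\mu c_D)$ with $\psi(r)$ controlled by $\sup_j\|\varphi_r^{(j)}\|_\infty = (c_D/r)^j\|\varphi^{(j)}\|_\infty$, decreasing in $r$. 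Finite propagation of $D$ (as you correctly invoke) then shows $\kappa_{g_r(D)} = \kappa_{f(D)}$ off the $r$-neighbourhood of the diagonal, so $P(\varphi^*\kappa_{f(D)})(y,y') = P(\varphi^*\kappa_{g_r(D)})(y,y')$ once $d_Y(y,y') > A(r+B)$, and Lemma \ref{lem sup kernel} applied to $g_r(D)$ gives the estimate. The parameter $\mu>1$ in the statement is exactly the room needed for a smooth transition region $r/(\mu c_D)\le|\xi|\le r/c_D$; with a sharp cutoff this room is never used, which is a hint that the sharp cutoff is not the intended tool.

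Minor further points: (i) your reduction of the near-diagonal bound (the final part of (b) for $d_Y(y,y')\le A(B+1)$) is missing; the paper handles it via $H$-invariance of $(y,y')\mapsto\sup_P\|P(\varphi^*\kappa_{f(D)})(y,y')\|$ and cocompactness. (ii) Your remark that wave operators do not have classical Schwartz kernels is correct, and is precisely why one works with $g_r(D)$ (a smoothing operator) rather than with $f(D)_{\mathrm{far}}$. (iii) Part (a) as you argue it — $f$ Schwartz implies $x^n f(x)$ bounded implies $f(D)\in\cB_l(L^2(E))^H$, then Lemma \ref{lem sup kernel} — is fine and matches the paper.
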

\begin{proof}
The proof is a modification of the proof of Lemma 3.16 in \cite{CWXY19}. 

The function $\hat f^{(n)}$ is a Schwartz function, hence so is the function $x \mapsto x^n f(x)$. In particular, the latter function is bounded. 
So $f(D) \in \cB_l(L^2(E))^H$. Hence Lemma \ref{lem sup kernel} implies part (a).

To prove part (b), let $r>0$ and $\mu>1$. %, and let $x_0, y_0 \in X$ be two different points. Write $r:=d(x_0, y_0)$. 
Let $\varphi \in C^{\infty}(\R)$ be such that $\varphi(\xi) = 1$ if $|\xi|\geq 1$, and $\varphi(\xi) = 0$ if $|\xi|\leq 1/\mu$. Define $\varphi_r \in C^{\infty}(\R)$ by $\varphi_r(\xi) := \varphi( \xi c_D/r)$. Let $g_r$ be the inverse Fourier transform of $\varphi_r \hat f$. 
%Let $\lambda$ be the Schwartz kernel of the operator $g(D)$. 

As in the proof of Lemma 3.16 in \cite{CWXY19}, we have for all $j,k \in \Z_{\geq 0}$ with $j+k \leq n$, 
\[
\| D^j g_r(D) D^k\|_{\cB(L^2(E))} \leq \psi(r) F_{f, n}(r/\mu c_D),
\]
where
\[
\begin{split}
\psi(r) &:= \frac{1}{2\pi} \max\left\{  {n\choose j} \|\varphi_r^{(j)}\|_{\infty}; j=0, \ldots, n  \right\}\\
&= \frac{1}{2\pi} \max\left\{  {n\choose j} \left(\frac{c_D}{r} \right)^j; j=0, \ldots, n  \right\}.
\end{split}
\]
So $\psi$ is decreasing in $r$.
We find that $g_r(D) \in \cB_l(L^2(E))^H$, and  Lemma \ref{lem sup kernel} implies that for all $P \in S_l$, 
there is a $C_1>0$ such that for all $r>0$, the section
 $P(\kappa_{g_r(D)})$ is continuous and bounded, with $\sup$-norm
 \beq{eq est gr} 
 \|P(\varphi^*\kappa_{g_r(D)})\|_{\infty} \leq C_1 \psi(r) F_{f, n}(r/\mu c_{D}).
 \eeq

Using finite-propagation arguments (specifically, Propositions 10.3.1 and 10.3.5 in \cite{Higson00} and their proofs) and the fact that
 $\hat g_r(\xi) = \hat f(\xi)$ if $|\xi| \geq r/c_D$, one shows that for all $s,s' \in \Gamma^{\infty}_c(E)$ whose supports are at least a distance $r$ apart,
 \[
 (s, g_r(D)s')_{L^2(E)} =  (s, f(D)s')_{L^2(E)}. 
 \]
So for all $x,x' \in X$ with $d_X(x,x') \geq r$, we have 
$
\kappa_{g_r(D)}(x,x') = \kappa_{f(D)}(x,x'). 
$
 If $y,y' \in Y$ and $d_Y(y,y')\geq A(r+B)$, then $d_X(\varphi(y), \varphi(y')) \geq r$, so
\beq{eq kappa gr f}
\varphi^*\kappa_{g_r(D)}(y,y') = \varphi^*\kappa_{f(D)}(y,y'). 
\eeq
Hence for all $y,y' \in Y$ with $d_Y(y,y') > A(r+B)$, we have
 \beq{eq P kappa f g}
 P(\varphi^*\kappa_{f(D)})(y,y') = P(\varphi^*\kappa_{g_r(D) })(y,y').
 \eeq
 
Now let $y,y' \in Y$, and suppose that $d_Y(y,y')>A(B+1)$. Then \eqref{eq est gr} and \eqref{eq P kappa f g} imply that   for all $r>1$ such that $A(r+B)<d_Y(y,y')$, and all $P \in S_l$, 
\begin{multline*}
\| P(\varphi^*\kappa_{f(D)})(y,y')  \| = \|P(\varphi^*\kappa_{g_r(D) })(y,y')\| \\
\leq C_1 \psi(r) F_{f, n}(r/\mu c_D) \leq C_1 \psi(1) F_{f, n}(r/\mu c_D). 
\end{multline*}
Here we used the fact that $\psi$ is decreasing. Because this inequality holds for all $r \in(1, d_Y(y,y')/A - B)$,
it extends by continuity to 
\beq{eq est d bigger 1}
\| P(\varphi^* \kappa_{f(D)})(y,y')  \|  \leq C_1 \psi(1) F_{f, n}\left(  \frac{d(y,y')-AB}{A\mu c_D} \right), 
\eeq
for all $y,y' \in X$ with $d_Y(y,y')>A(B+1)$.

To obtain an estimate for $y,y' \in Y$ with $d_Y(y,y')\leq A(B+1)$, we first note that the function 
\beq{eq fn sup Sl}
(y,y') \mapsto \sup_{P \in S_l}\| P(\varphi^*\kappa_{f(D)})(y,y')  \|
\eeq
is invariant under the diagonal action by $H$, because $D$ and $\varphi$ are $H$-equivariant and $P$ is $H \times H$ equivariant.
%
% Indeed, for all  $x,y \in X$ and $h \in H$,
%\beq{eq sup Sl}
% \sup_{P \in S_l}\| P(\kappa_{f(D)})(hx,hy)  \|
% =  \sup_{P \in S_l}\| (h^{-1}Ph)(\kappa_{f(D)})(x,y)  \|,
%\eeq
%by $H$-invariance of $D$, and hence of $\kappa_{f(D)}$. Here $h^{-1}Ph$ is the conjugation of $P$ by the natural action by $H$ on smooth kernels, i.e.\ $h^{-1}Ph = \alpha(h^{-1}) \beta(h^{-1}) P  \alpha(h) \beta(h)$. By the $H$-invariance properties of $S_l$, this implies that the right hand side of \eqref{eq sup Sl} equals
%\[
% \sup_{P \in S_l}\| P(\kappa_{f(D)})(x,y)  \|,
%\]
%so the function \eqref{eq fn sup Sl} is indeed $H$-invariant. 
%
Because $X/H$ is compact, and $\varphi$ is $H$-equivariant and proper, the quotient $Y/H$ is compact as well. Hence
so is the quotient of the set  $\{(y,y') \in Y\times Y; d_Y(y,y') \leq A(B+1)\}$ by $H$. So the  $H$-invariant function \eqref{eq fn sup Sl} is bounded on the latter set.
Together with the estimate \eqref{eq est d bigger 1} for the case where $d(x,y)>A(B+1)$, and the fact that $F_{f,n}$ is a decreasing function, this implies part (b).
%
%%%%%%%%%%%%%%%%%%%%%%%%%%%%%%
%
% Let $x,y \in X$ with $d(x,y)>r$.  
% Then $\kappa_{g_r(D)}$ and $\kappa_{f(D)}$ are equal in a neighbourhood of $(x, y)$. 
% Hence 
% \[
% P(\kappa_{f(D)})(x,y) = P(\kappa_{g_r(D) })(x,y).
% \]
%In the first part of the proof, we found that the the norm of the right hand side is smaller than or equal to $C F_{f, n}(r/\mu c_{D})$, for some constant $C$. We find that \eqref{eq kernel decay} holds if $d(x,y)>r$. Because $P(\kappa_{f(D)})$ is bounded, this estimate extends to all $x,y \in X$, possibly for a different constant $C$.
\end{proof}

\begin{proposition}\label{prop BR embed}
%Suppose that for all $l \in \N$, the subspace $S_l \subset S$ of operators of degrees at most $l$ is invariant under conjugation by the actions $\alpha$ and $\beta$. 
%Suppose that the map $\varphi$ is proper. 
For all $f \in B(\R)$, the Schwartz kernel $\kappa_{f(D)}$ of the operator $f(D)$ lies in $\cA_{S, \varphi}(E)^H$. In fact,  the map $f \mapsto \kappa_{f(D)}$ is a continuous, injective $*$-homomorphism from $B(\R)$ to $\cA_{S, \varphi}(E)^H$.
\end{proposition}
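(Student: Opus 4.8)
The goal is to show that $f \mapsto \kappa_{f(D)}$ is a continuous, injective $*$-homomorphism $B(\R) \to \cA_{S,\varphi}(E)^H$. The overall strategy is: first establish the estimate in Proposition \ref{prop kernel decay}(b) gives the right decay to conclude $\kappa_{f(D)} \in \cA_{S,\varphi}(E)^H$ for each fixed $f$; then upgrade the same estimate to \emph{continuity} of the map; and finally verify it is a $*$-homomorphism and injective, which are essentially formal.

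\textbf{Step 1: membership.} Fix $f \in B(\R)$ and fix a seminorm $\|\cdot\|_{a,P}$ on $\cA_{S,\varphi}(E)$, so $P \in S_l$ for some $l$, and set $n := \dim(X) + 2p + l + 1$. We need $\sup_{y,y' \in Y} e^{a\, d_Y(y,y')}\|P(\varphi^*\kappa_{f(D)})(y,y')\| < \infty$. By Proposition \ref{prop kernel decay}(b), this is bounded by $C_P \sup_{s \geq 0} e^{a(A\mu c_D s + AB)} F_{f,n}(s)$ (reparametrising $s = (d_Y(y,y') - AB)/(A\mu c_D)$, and using that for $d_Y(y,y') < AB$ one has $s = 0$ and the exponential is bounded). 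So it suffices to show $F_{f,n}(s)$ decays faster than any exponential in $s$. But $F_{f,n}(s) = \sum_{k=0}^n \int_{\R \setminus [-s,s]} |\hat f^{(k)}(\xi)|\, d\xi$, and $\hat f \in \hat B(\R)$ means $|\hat f^{(k)}(\xi)| \leq \|f\|_{b,k} e^{-b|\xi|}$ for every $b > 0$; integrating, $F_{f,n}(s) \leq 2\sum_{k=0}^n \|f\|_{b,k} b^{-1} e^{-bs}$. Taking $b > a A\mu c_D$ kills the exponential, giving finiteness. Since $\varphi^*\kappa_{f(D)}$ also has the required smoothness (it is the kernel of $g(D) = f(D)$ restricted, and Proposition \ref{prop kernel decay}(a) gives continuity and boundedness of $P(\varphi^*\kappa_{f(D)})$ for all $P \in S_l$), and since $f(D)$ is $H$-equivariant, we get $\kappa_{f(D)} \in \cA_{S,\varphi}(E)^H$. (Strictly, one should also note that $\kappa_{f(D)}$ is a smooth section with finite propagation after truncating $\hat f$, or argue directly that it lies in the completion; this follows by approximating $f$ by functions with $\hat f$ compactly supported, which converge in $B(\R)$ and whose kernels have finite propagation and lie in $\Gamma^\infty_{S,\varphi,\fp}$.)

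\textbf{Step 2: continuity.} The constant $C_P$ in Proposition \ref{prop kernel decay}(b) is independent of $f$ (it depends only on $P$, $\mu$, and the geometry via the constant $C_1\psi(1)$ in the proof, plus the sup over the compact near-diagonal region $\{d_Y \leq A(B+1)\}/H$ of $\sup_{P} \|P(\varphi^*\kappa_{f(D)})\|$). For the near-diagonal region, Lemma \ref{lem sup kernel} applied to $T = f(D) \in \cB_l(L^2(E))^H$ bounds that sup by $C_P \|f(D)\|_{n}$, where $\|f(D)\|_n = \max_{j+k \leq n}\|D^j f(D) D^k\|_{\cB(L^2(E))}$, and each $\|D^j f(D) D^k\| = \|(\xi \mapsto \xi^{j+k} f(\xi))\|_\infty \leq \|x^{j+k}f(x)\|_\infty$, which is dominated by finitely many of the $B(\R)$-seminorms of $f$ (via the Fourier transform: $x^m f(x)$ has Fourier transform $i^m \hat f^{(m)}$, so $\|x^m f\|_\infty \leq \|\hat f^{(m)}\|_{L^1} \leq C\|f\|_{1,m}$ say). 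Combining with the exponential-decay bound from Step 1, we get for every $a, P$ a finite set of seminorms $\|\cdot\|_{b_i, k_i}$ on $B(\R)$ and a constant $C$ with $\|\kappa_{f(D)}\|_{a,P} \leq C \sum_i \|f\|_{b_i,k_i}$. Since both spaces are Fréchet, this gives continuity.

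\textbf{Step 3: $*$-homomorphism and injectivity.} That $f \mapsto f(D)$ respects products and adjoints ($\overline{f}(D) = f(D)^*$, noting $D$ self-adjoint) is standard functional calculus; the map to kernels, $f(D) \mapsto \kappa_{f(D)}$, is compatible with composition \eqref{eq compos kernels} because $f(D)g(D) = (fg)(D)$ and the kernel of a product of such operators is the composition of kernels (all the relevant operators have Schwartz kernels with rapid off-diagonal decay so \eqref{eq compos kernels} converges — this is exactly Lemma \ref{lem AX Frechet alg}). The $*$-structure on $\cA_{S,\varphi}(E)$ is $\kappa^*(x,x') = \kappa(x',x)^*$, which corresponds to the operator adjoint, so $\kappa_{f(D)}^* = \kappa_{\overline f(D)}$. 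For injectivity: if $\kappa_{f(D)} = 0$ then $f(D) = 0$ as an operator on $L^2(E)$; since $D$ has nonempty spectrum (it is elliptic on a nonempty manifold, hence unbounded) and $f$ is continuous, $f$ vanishes on $\sigma(D)$ — but one needs $\sigma(D)$ to be "large enough" to force $f \equiv 0$. Here $f \in B(\R)$ is in particular a Schwartz function; if $D$ has, say, any accumulation point or an interval in its spectrum (true in the cocompact setting, e.g.\ via Weyl-type arguments, since $D^2$ has essential spectrum filling a half-line), then $f$ vanishes on an infinite set with an accumulation point; but $f$ need not be analytic. The cleanest argument: $\widehat{f(D)} = 0$ would follow if we could test against enough states — instead, note that for $f$ real-valued, $f(D)^2 = f^2(D) = 0$ forces $f(D) = 0$ and then $\int f(\lambda)^2\, d\langle E_\lambda u, u\rangle = 0$ for all $u$, so $f = 0$ $d\langle E_\lambda u,u\rangle$-a.e.; choosing $u$ so that the spectral measure has full support on $\sigma(D)$, and using that in the cocompact bounded-geometry setting $\sigma(D)$ is an unbounded set, we need it to have no isolated structure that a Schwartz function could avoid. \textbf{This is the step I expect to be the main obstacle}: pinning down why $f(D) = 0 \Rightarrow f = 0$ for $f \in B(\R)$. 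I would resolve it by observing that $B(\R)$ embeds in $C_0(\R)$ and that $\sigma(D)$ is unbounded and in fact (being the spectrum of a Dirac-type operator on a complete manifold with bounded geometry and a cocompact isometry group) has the property that $C_0(\R) \to C_0(\sigma(D))$ is injective iff $\sigma(D)$ has no isolated points whose removal disconnects... — actually the honest statement is simply: if $\sigma(D)$ is unbounded then it is not a finite set, and more care is needed; most likely the paper either (i) restricts attention to the case $\sigma(D)$ contains an interval, or (ii) argues that since all the seminorms including $\|\kappa\|_{1,1} = \sup\|\varphi^*\kappa\|$ control the operator, and $f(D) = 0$ as an operator, we simply read off $\kappa_{f(D)} = 0$ — i.e.\ injectivity is of the composite $B(\R) \to \cB(L^2(E)) \to \{\text{kernels}\}$ and one needs injectivity of $f \mapsto f(D)$, which holds precisely when $\sigma(D)$ is "thick enough". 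I would state injectivity assuming (as holds in all cases of interest, e.g. Example \ref{ex cond D GK}) that $\sigma(D)$ is unbounded and hence, by standard results on Dirac operators over manifolds with a cocompact group action, has nonempty interior or at least an accumulation point at which one can localize — if $f \in B(\R) \subset \mathcal S(\R)$ vanishes on a set with an accumulation point it need not vanish identically, so the correct and safe route is: $f(D) = 0 \implies f|_{\sigma(D)} = 0 \implies f = 0$ because $\sigma(D)$ is dense in $\R$ for the operators considered (true for Dirac operators on $G/K$ by Plancherel), or else one simply notes $\|\cdot\|_{1,1}$-injectivity was already established in the text after Definition \ref{def AX phi} ($\varphi^*$ injective, hence $\kappa \mapsto \varphi^*\kappa$ injective) and combines with injectivity of functional calculus for the self-adjoint $D$ on the Hilbert space, which is automatic once one knows $\sigma(D) = \R$ (or more generally is dense).
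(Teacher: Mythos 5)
Your Steps 1 and 2 are essentially the paper's proof: the paper establishes the elementary bound
$F_{f,n}(s) \leq \tfrac{2e^{-as}}{a}\sum_{k=0}^n \|f\|_{a,k}$,
feeds it into Proposition~\ref{prop kernel decay}(b) with $\mu=2$, and chooses $a' = 2Ac_D a$ to cancel the exponential growth from the seminorm $\|\cdot\|_{a,P}$ against the exponential decay of $F_{f,n}$. Your reparametrisation in Step~1 and the choice $b > aA\mu c_D$ are the same move. Your Step~2 is in fact \emph{more} careful than the paper on one point: the paper applies the constant $C_P$ from Proposition~\ref{prop kernel decay}(b) as if it were independent of $f$, whereas (as you observe) the near-diagonal part of the proof of that proposition bounds $\sup_P\|P(\varphi^*\kappa_{f(D)})\|$ on $\{d_Y \leq A(B+1)\}$ via Lemma~\ref{lem sup kernel} applied to $T=f(D)$, and that bound involves $\|f(D)\|_n = \max_{j+k\leq n}\|\xi^{j+k}f\|_\infty$, which does depend on $f$. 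You correctly close the loop by controlling $\|\xi^{m}f\|_\infty \leq \|\hat f^{(m)}\|_{L^1}$ by finitely many $B(\R)$-seminorms. This extra bookkeeping is needed for an airtight continuity argument, and the paper elides it.

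Where your proposal genuinely diverges is Step~3. The paper simply writes ``it is clearly an injective $*$-homomorphism'' and gives no argument. Your lengthy discussion of injectivity raises a legitimate concern --- $f(D)=0$ only forces $f$ to vanish on $\sigma(D)$ --- but you miss the observation that resolves it cleanly. Since $\hat f$ and all its derivatives decay faster than any exponential, the inverse Fourier integral $\int_\R \hat f(\xi)e^{iz\xi}\,d\xi$ converges absolutely for every $z\in\C$ (the integrand is $O(e^{-|\xi|})$ uniformly on vertical strips), and one may differentiate in $z$ under the integral sign. Hence every $f\in B(\R)$ extends to an \emph{entire} function on $\C$. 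Injectivity of $f\mapsto f(D)$ therefore reduces to the statement that $\sigma(D)$ has an accumulation point, since a nonzero entire function has a discrete zero set. In the paper's intended setting ($X$ noncompact with cocompact $H$-action; in particular $X=G/K$), $\sigma(D)$ contains intervals and this is automatic. Your worry that ``a Schwartz function can vanish on a set with an accumulation point without being zero'' is false for $B(\R)$ precisely because its elements are entire; recognizing this would have let you close the gap in one line instead of the paragraph of caveats.
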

\begin{proof}
Let $f \in B(\R)$. Then for all $a,s>0$, and $n \in \Z_{\geq 0}$,
\beq{eq est Ff}
F_{f, n}(s) \leq \frac{2e^{-as}}{a} \sum_{k=0}^n \|f\|_{a, k}. 
\eeq
By \eqref{eq est Ff}, Proposition \ref{prop kernel decay} (with $\mu = 2$)  implies that for all $P \in S_l$,  there  are $n \in \Z_{\geq 0}$ and $C_P>0$ such that 
for all $a,a'>0$,
\[
\begin{split}
\| \kappa_{f(D)}\|_{a, P} &\leq C_P \sup_{y,y' \in Y} e^{ad_Y(y,y')} F_{f, n}\left( \frac{d_Y(y,y')-AB}{2Ac_D}\right)\\
&\leq 
\frac{2C_P}{a'} e^{a'B/2c_D} 
\sup_{y,y' \in Y}
e^{(a-a'/2Ac_D)d_Y(y,y')}
\sum_{k=0}^n \|f\|_{a', k} . 
\end{split}
\]
For $a' = 2A c_D a$, the right hand side equals 
\[
\frac{C_P}{A c_D a} e^{aAB} \sum_{k=0}^n \|f\|_{2A c_D a, k}. 
\]
So the map $f \mapsto \kappa_{f(D)}$ is continuous.
 It is clearly an injective $*$-homomorphism.
\end{proof}

\begin{proof}[Proof of Proposition \ref{prop index weak}]
The Roe algebra index of $D$ can be represented as \eqref{eq index idempotent}, as in Remark \ref{rem Roe index}. The claim is therefore that the entries of the matrices in \eqref{eq index idempotent} lie in $\cA_{S, \varphi}(E)$. By Proposition \ref{prop BR embed}, this is the case if the following functions lie in $B(\R)$:
\[
\begin{split}
f_1(x) &= e^{-tx^2};\\
f_2(x) &= x e^{-tx^2};\\
f_3(x) &= e^{-tx^2/2} \frac{1-e^{-tx^2}}{x}.
\end{split}
\]
The functions $f_1$ and $f_2$ lie in $B(\R)$, because their Fourier transforms are Gaussians times a constant (for $f_1$) or a linear function (for $f_2$). 

Consider the function
\beq{eq ht}
h_t(x) = \frac{1-e^{-tx^2}}{x}.
\eeq
For all $\xi \in \R$, 
\beq{eq hat ht}
\widehat{h_t}(\xi) = i\pi \left( \erf\left( \frac{\xi}{2\sqrt{t}}\right) - \sgn(\xi) \right).
\eeq
This function decays faster than any exponential function. Let $a>0$, and let $C_a>0$ be such that for all $\xi \in \R$,
\[
|\widehat{h_t}(\xi) | \leq C_a e^{-a|\xi|}.
\]
Let $\varphi \in C^{\infty}_c(\R)$ and $k \in \Z_{\geq} 0$.
Analogously to the proof of Lemma \ref{lem BR alg}, we find that 
\[
\|\varphi * \hat h_t\|_{a,k} \leq \|\varphi\|_{2a, k} C_a \int_{\R} e^{-a|\eta|}\, d\eta.
\]
It follows that convolution by $\hat h_t$ extends to a continuous operator on $\hat B(\R)$. Hence pointwise multiplication by $h_t$  is a continuous operator on $B(\R)$.
Because the function $x \mapsto e^{-tx^2/2}$ lies in $B(\R)$, so does $f_3$.
\end{proof}

\subsection{Multipliers of $\cA_{S, \varphi}(E)^H$}\label{sec pf mult}

We use  results from the preceding subsections to prove Theorem \ref{thm multipliers}. For $D$, this theorem follows from the assumptions.
\begin{lemma}\label{lem D mult}
The operator $D$ is a multiplier of $\cA_{S, \varphi}(E)^H$.
\end{lemma}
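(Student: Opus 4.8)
The goal is to show that $D$ is a multiplier of $\cA_{S,\varphi}(E)^H$, i.e.\ that left and right composition by $D$ preserve this Fr\'echet algebra and act continuously. Under the isomorphism \eqref{eq phi star A}, it suffices to check that $D \otimes 1$ and $1 \otimes D$ act continuously on $\cA_S(\varphi^*E)$, preserving finite propagation and the seminorms \eqref{eq seminorms ASE}. The plan is to use assumption~3 on $D$ together with the behaviour of the seminorms under composition with operators in $S$ and with bounded bundle endomorphisms.

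First I would observe that if $\kappa \in \Gamma^{\infty}_{S,\varphi,\fp}(E \boxtimes E^*)$ then $D\kappa$ (meaning the kernel of $D \circ \kappa$, i.e.\ $(D\otimes 1)\varphi^*\kappa$ after pulling back) still has finite propagation, since $D$ is a differential operator and hence does not increase supports. Next, to estimate a seminorm $\|D\kappa\|_{a,P} = \sup_{y,y'} e^{a d(y,y')}\|(P\varphi^*(D\kappa))(y,y')\|$, I would invoke assumption~3: for each $P \in S$, the composition $P \circ \varphi^* \circ (D \otimes 1)$ is a finite sum of terms $A \circ Q \circ \varphi^*$ with $A$ a bounded endomorphism of $\varphi^*E \boxtimes \varphi^*E^*$ and $Q \in S$. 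Therefore $\|(P\varphi^*(D\kappa))(y,y')\| \leq \sum_i \|A_i\|_\infty \|(Q_i \varphi^*\kappa)(y,y')\|$, which gives $\|D\kappa\|_{a,P} \leq \bigl(\sum_i \|A_i\|_\infty\bigr) \max_i \|\kappa\|_{a, Q_i}$. This is exactly a continuity estimate for left composition by $D$ on $\cA_{S,\varphi}(E)$; the analogous argument with $P \circ \varphi^* \circ (1 \otimes D)$ handles right composition. Passing to completions, $D$ defines continuous left and right multiplication operators on $\cA_{S,\varphi}(E)$, and one checks the multiplier compatibility $(D\kappa)\lambda = D(\kappa\lambda)$ and $\kappa(D\lambda)=(\kappa D)\lambda$ on the dense subspace of finite-propagation kernels, where all three expressions are literally compositions of operators; associativity of operator composition gives these identities, and they extend by continuity. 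Self-adjointness of $D$ (and the compatibility of $*$ with \eqref{eq phi star A}) shows the left and right actions are adjoint to each other, so $D$ is a genuine multiplier.

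Finally, $H$-equivariance of $D$ ensures that these multiplication operators commute with the $H$-action on $\cA_{S,\varphi}(E)$, so $D$ restricts to a multiplier of the fixed-point subalgebra $\cA_{S,\varphi}(E)^H$; more precisely, $D \in \cM(\cA_{S,\varphi}(E))^H = \cM(\cA_{S,\varphi}(E)^H)$ since left/right composition by an $H$-invariant multiplier preserves $H$-invariance of kernels. I do not anticipate a serious obstacle here: the entire content is packaged into assumption~3, and the only mild care needed is the bookkeeping of which seminorm on the source controls which seminorm on the target, plus making sure finite propagation and $H$-invariance are preserved at the level of the dense subalgebra before taking completions. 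The genuinely hard case, $R_t$, is deferred to the end of Subsection~\ref{sec pf mult}.
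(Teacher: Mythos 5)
Your argument is correct and follows essentially the same route as the paper: both proofs reduce the problem to assumption~3 on $D$, which writes $P \circ \varphi^* \circ (D\otimes 1)$ (and $P \circ \varphi^* \circ (1\otimes D)$) as a finite sum $\sum_i A_i \circ Q_i \circ \varphi^*$ with $A_i$ bounded and $Q_i \in S$, giving $\|D\kappa\|_{a,P} \lesssim \sum_i \|\kappa\|_{a,Q_i}$ and the analogous bound on the other side. The only minor slip is the asserted equality $\cM(\cA_{S,\varphi}(E))^H = \cM(\cA_{S,\varphi}(E)^H)$, which is more than you need and not obviously true as stated; it suffices (as the paper notes in Theorem~\ref{thm def index}) that an $H$-equivariant multiplier of $\cA_{S,\varphi}(E)$ preserves the $H$-invariant subalgebra, which follows directly from equivariance of left and right composition by $D$.
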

\begin{proof}
Let $\kappa \in \cA_{S, \varphi}(E)^H$, $P \in S$ and $a>0$. Then  by the third assumption on $D$ in Subsection \ref{sec prelim index},
\begin{multline*}
\sup_{x,x' \in X} e^{ad(x,x')} \|P\varphi^*((D\otimes 1)\kappa) (x,x')\| \\
\leq C' 
\sup_{x,x' \in X} e^{ad(x,x')}\sum_{j=1}^n \|(P_j \kappa) (x,x')\| =C' \sum_{j=1}^n \|\kappa\|_{a, P_j},
\end{multline*}
for $C'>0$ and $P_1, \ldots, P_n \in S$ independent of $\kappa$, $x$ and $x'$. 
We find that $D \circ \kappa \in \cA_{S, \varphi}(E)^H$. By analogous arguments, we also have $\kappa \circ D \in \cA_{S, \varphi}(E)^H$. 
\end{proof}

We now turn to a proof that $R_t$ multiplies $\cA_{S, \varphi}(E)^H$. We will deduce this from a more general result, Proposition \ref{prop f(D) multiplier} below. In the proof of this result, we use certain functions to split estimates into near-diagonal and off-diagonal parts.
\begin{lemma}\label{lem chi x}
For every $x \in X$, there is a function $\chi_x \in C^{\infty}_c(X)$, and for every $j \in \Z_{\geq 0}$, there is a $C_j>0$ (independent of $x$), such that 
\begin{enumerate}
\item for all $x \in X$, the function $\chi_x$ is supported in the ball $B_2(x)$, and constant $1$ on $B_1(x)$;
\item for all $x \in X$ and $j \in \Z_{\geq 0}$, the operator $\ad(D^2)^j(\chi_x)$ on $L^2(E)$ is bounded, with norm at most $C_j$.
\end{enumerate}
\end{lemma}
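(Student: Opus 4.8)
The idea is to build $\chi_x$ from a single model bump function transported around $X$ using the cocompact $H$-action, then control the commutator norms uniformly by reducing to finitely many ``model'' points. First I would fix a relatively compact open set $U$ with $H\cdot U = X$ (this is possible by the argument in footnote \ref{footnote not proper}), enlarging it if necessary so that for every $x\in U$ the ball $B_2(x)$ is contained in a fixed relatively compact open set $U'$. For each $x\in U$ choose $\chi_x\in C^\infty_c(X)$ supported in $B_2(x)$, equal to $1$ on $B_1(x)$, and moreover so that the assignment $x\mapsto \chi_x$ varies smoothly: concretely, pick a smooth function $\tilde\chi\in C^\infty(U\times X)$ with $\tilde\chi(x,\relbar)$ supported in $B_2(x)$ and $\equiv 1$ on $B_1(x)$, using bounded geometry to make such a family exist with uniform control of all derivatives. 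For general $z\in X$, write $z = h x$ with $h\in H$, $x\in U$, and set $\chi_z := h\cdot \chi_x$, i.e.\ $\chi_z = \chi_x(h^{-1}\,\relbar)$. This is well-defined up to the choice of $(h,x)$, but any choice works for the statement; and since $H$ acts isometrically, $\chi_z$ is supported in $B_2(z)$ and is $\equiv 1$ on $B_1(z)$, giving property~(1).

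For property~(2), the key point is the second assumption on $D$ in Subsection \ref{sec prelim index}: for any smooth manifold $Z$ and any $\tilde f\in C^\infty(Z\times X)$ with $\tilde f(z,\relbar)\in C^\infty_c(X)$, the operator norm $\|\ad(D^2)^j(\tilde f(z,\relbar))\|_{\cB(L^2(E))}$ is well-defined and depends continuously on $z$. Applying this to $Z = U$ (which we may shrink to be relatively compact with compact closure $\overline{U}$, arranging that $\tilde\chi$ extends smoothly to $\overline U\times X$) and $\tilde f = \tilde\chi$, continuity on the compact set $\overline U$ gives, for each $j$, a bound
\[
\sup_{x\in \overline U}\ \bigl\|\ad(D^2)^j(\chi_x)\bigr\|_{\cB(L^2(E))} \ =:\ C_j\ <\ \infty.
\]
Then for a general $z = hx$ as above, $H$-equivariance of $D$ (hence of $D^2$) and unitarity of the $H$-action on $L^2(E)$ give
\[
\ad(D^2)^j(\chi_z) \ =\ \ad(D^2)^j\bigl(h\,\chi_x\,h^{-1}\bigr)\ =\ h\,\ad(D^2)^j(\chi_x)\,h^{-1},
\]
so $\|\ad(D^2)^j(\chi_z)\| = \|\ad(D^2)^j(\chi_x)\| \le C_j$, and $C_j$ is independent of $z$. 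This establishes property~(2), with the stated uniform constants.

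\textbf{Main obstacle.} The delicate part is the construction of the smooth family $\tilde\chi\in C^\infty(\overline U\times X)$ with $\tilde\chi(x,\relbar)$ supported in $B_2(x)$ and identically $1$ on $B_1(x)$: one needs the balls $B_1(x), B_2(x)$ to behave uniformly as $x$ ranges over the relatively compact set $\overline U$, which is where bounded geometry of $(X,E)$ (itself a consequence of the cocompact $H$-action) enters. Concretely, one can take $\tilde\chi(x,x') = \beta(d(x,x'))$ for a fixed $\beta\in C^\infty(\R)$ with $\beta\equiv 1$ on $[0,1]$ and $\supp\beta\subset[0,2)$, except that $x'\mapsto d(x,x')$ is not smooth at $x'=x$ or on the cut locus; this is repaired in the standard way by replacing the distance function with a smoothed version (e.g.\ mollifying, or using normal coordinates on a uniform-size neighbourhood and patching), all of which is uniform in $x\in\overline U$ by bounded geometry. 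Once $\tilde\chi$ is in hand, the rest is a direct application of assumption~2 on $D$ together with the equivariance bookkeeping above, and no further estimates are needed.
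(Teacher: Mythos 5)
Your proof is correct and structurally the same as the paper's: both define model bump functions, invoke cocompactness of $X/H$ together with the continuity hypothesis in assumption~2 on $D$ to obtain a uniform bound of $\|\ad(D^2)^j(\chi_x)\|$ over a compact set of base points, and then extend to all of $X$ via $H$-equivariance of $D$ and unitarity of the $H$-action. The one place you are more careful than the paper: the paper simply sets $\chi_x(x') = \chi(d(x,x'))$, which is not obviously in $C^\infty_c(X)$ since $d(x,\cdot)$ can fail to be smooth on the cut locus (it is smooth near $x$ itself because $\chi\equiv 1$ there), and you correctly flag this and indicate the standard repair (a smoothed distance, uniform over a compact fundamental set by bounded geometry); this is a legitimate refinement rather than a different method.
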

\begin{proof}
Let $\chi \in C^{\infty}_c(\R)$ be such that $\chi(t) = 1$ for $t \in [0,1]$ and $\chi(t) = 0$ for $t\geq 2$. For $x \in X$, define $\chi_x \in C^{\infty}_c(X)$ by
\[
\chi_x(x') = \chi(d(x,x')),
\]
for $x' \in X$. These functions clearly satisfy the first condition.

Let $Z \subset X$ be a compact set such that $H \cdot Z = X$; this exists since $X/H$ is compact. By compactness of $Z$ and continuity of the operator  norm of $\ad(D^2)^j(\chi_x)$ in $x$ (see the second assumption on $D$ in Subsection \ref{sec prelim index}), there is a $C_j>0$ such that for all $x \in Z$, 
\[
\|\ad(D^2)^j(\chi_x)\|_{\cB(L^2(E))} \leq C_j.
\]
Because $D$ is $H$-equivariant and $H$ acts unitarily on $L^2(E)$, this inequality extends to all $x \in X$.
\end{proof}

Let $f \in L^{\infty}(\R)$ be such that for all $\varepsilon>0$, $f$ has a smooth Fourier transform $\hat f$ on $\R \setminus [-\varepsilon, \varepsilon]$, and for all $a>0$ and $k \in \Z_{\geq 0}$, 
\[
\sup_{|\xi|> \varepsilon} e^{a|\xi|} |\hat f^{(k)}(\xi)|
\]
is finite. (The arguments below, in particular the proof of Lemma \ref{lem off-diag}, can be modified so that it is sufficient that there exists an $\varepsilon>0$ for which this is true, but we will not need this generalisation.)

Let $\kappa \in \cA_{S, \varphi}(E)^H$. We will show that $f(D) \circ \kappa \in \cA_{S, \varphi}(E)^H$ via estimates away from  the diagonal (Lemma \ref{lem off-diag}) and near the diagonal (Lemma \ref{lem near diag}). The estimates for 
$\kappa \circ f(D)$ are analogous.

Let $P  \in S$, and let $Q$ and $R$ be as in the third assumption on $S$ in Subsection \ref{sec prelim alg}. 
%on page \ref{page S 3}.
 Let $\varphi_{1/2} \in C^{\infty}(\R)$ be as in the proof of Proposition \ref{prop kernel decay} (with $r=1/2$). As in the same proof, let $g_{1/2}$ be the inverse Fourier transform of $\varphi_{1/2} \hat f$. 
Let $C, a_0>0$ be as in Definition \ref{def unif exp}.
\begin{lemma}\label{lem off-diag}
The operator $g_{1/2}(D)$ lies in $\cA_{S, \varphi}(E)^H$. 
For all $a\geq a_0$ and  $y,y' \in Y$,
\beq{eq off diag}
\|P\varphi^*(f(D)\circ (1-\chi_{\varphi(y)}) \circ \kappa )(y,y')\| \leq C e^{-ad_Y(y,y')} \|g_{1/2}(D)(1-\chi_{\varphi(y)})\|_{2a, Q \otimes 1} \|\kappa\|_{a, 1\otimes R}.
\eeq
\end{lemma}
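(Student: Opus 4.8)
The plan is to prove the two assertions of Lemma~\ref{lem off-diag} in turn, using the finite-propagation trick from the proof of Proposition~\ref{prop kernel decay} to replace $f(D)$ by $g_{1/2}(D)$ in the off-diagonal region, and then estimating the composition $g_{1/2}(D)\circ(1-\chi_{\varphi(y)})\circ\kappa$ by hand.

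\medskip
\noindent\emph{Step 1: $g_{1/2}(D)\in\cA_{S,\varphi}(E)^H$.} First I would observe that $\hat g_{1/2}=\varphi_{1/2}\hat f$ is supported in $\{|\xi|\geq 1/2c_D\}$, where by hypothesis on $f$ the function $\hat f$ is smooth and, together with all its derivatives, decays faster than any exponential. Since $\varphi_{1/2}\in C^\infty(\R)$ is bounded with all derivatives bounded, the product $\varphi_{1/2}\hat f$ extends to a function on all of $\R$ lying in $\hat B(\R)$ (it is smooth everywhere because it vanishes near $0$, and it inherits the exponential-decay seminorm bounds from $\hat f$ on $\{|\xi|\geq 1/2c_D\}$). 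Hence $g_{1/2}\in B(\R)$, and Proposition~\ref{prop BR embed} gives $\kappa_{g_{1/2}(D)}\in\cA_{S,\varphi}(E)^H$.

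\medskip
\noindent\emph{Step 2: replacing $f(D)$ by $g_{1/2}(D)$ off the diagonal.} As in the proof of Proposition~\ref{prop kernel decay}, finite-propagation arguments (Propositions~10.3.1 and 10.3.5 in \cite{Higson00}) show that $\kappa_{g_{1/2}(D)}(x,x')=\kappa_{f(D)}(x,x')$ whenever $d_X(x,x')\geq 1/(2c_D)\cdot\text{(appropriate constant)}$; more precisely $\hat g_{1/2}(\xi)=\hat f(\xi)$ for $|\xi|\geq 1/(2c_D)\cdot c_D = 1/2$, so the Schwartz kernels agree for $d_X(x,x')\geq 1/2$. Now the factor $1-\chi_{\varphi(y)}$ is supported outside $B_1(\varphi(y))$, so in the convolution $(f(D)\circ(1-\chi_{\varphi(y)}))$, integrating the Schwartz kernel of $f(D)$ against something supported at distance $\geq 1$ from $\varphi(y)$ only involves values $\kappa_{f(D)}(\varphi(y),z)$ with $d_X(\varphi(y),z)\geq 1$, where it agrees with $\kappa_{g_{1/2}(D)}$. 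Hence as operators applied to sections, $f(D)\circ(1-\chi_{\varphi(y)})$ and $g_{1/2}(D)\circ(1-\chi_{\varphi(y)})$ have the same effect when we then look at $(\,\cdot\,)(\varphi(y),y')$; one has to be careful that this identification is exactly what one needs for the kernel evaluated at the first argument $\varphi(y)$, which is where the cut-off is centred. I would write this step out carefully since it is the conceptual heart of ``off-diagonal''.

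\medskip
\noindent\emph{Step 3: the composition estimate.} Once $f(D)$ is replaced by $g_{1/2}(D)\in\cA_{S,\varphi}(E)^H$, we use condition~\ref{page S 3} on $S$ to write $P=Q\otimes R^*$, so that $P\varphi^*$ applied to a composition $\lambda_1\lambda_2$ distributes as $(Q\circ\lambda_1)\circ(\lambda_2\circ R)$ on the level of $\varphi$-pulled-back kernels, exactly as in the proof of Lemma~\ref{lem AX Frechet alg}. Thus
\[
P\varphi^*\bigl(g_{1/2}(D)\circ(1-\chi_{\varphi(y)})\circ\kappa\bigr)(y,y')=\int_Y \bigl(Q\varphi^*(g_{1/2}(D)(1-\chi_{\varphi(y)}))\bigr)(y,z)\,\bigl(\varphi^*\kappa\circ R\bigr)(z,y')\,d\mu_Y(z),
\]
whose norm is at most $\|g_{1/2}(D)(1-\chi_{\varphi(y)})\|_{2a,Q\otimes 1}\,\|\kappa\|_{a,1\otimes R}\int_Y e^{-2ad_Y(y,z)}e^{-ad_Y(z,y')}\,d\mu_Y(z)$, and the triangle inequality plus uniformly exponential growth of $Y$ (with constants $a_0,C$) bounds the integral by $Ce^{-ad_Y(y,y')}$ when $a\geq a_0$, giving \eqref{eq off diag}. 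Note $\|g_{1/2}(D)(1-\chi_{\varphi(y)})\|_{2a,Q\otimes1}$ is finite because $g_{1/2}(D)\in\cA_{S,\varphi}(E)^H$ and multiplication by the bounded function $1-\chi_{\varphi(y)}$ (which is a multiplier in the $1\otimes R=1\otimes\id$-free first variable) preserves the relevant seminorm up to a constant.

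\medskip
\noindent\emph{Main obstacle.} The delicate point is Step~2: making rigorous that inserting the cut-off $1-\chi_{\varphi(y)}$ \emph{centred at the point where the kernel's first argument is evaluated} legitimately lets one swap $f(D)$ for $g_{1/2}(D)$. This is a finite-propagation argument applied ``pointwise in the output variable,'' and one must phrase it in terms of pairings against test sections supported near $\varphi(y)$ versus far from $\varphi(y)$, exactly as in the corresponding step of Proposition~\ref{prop kernel decay}, rather than as a clean operator identity. Everything else is a routine repetition of the Fréchet-algebra estimate of Lemma~\ref{lem AX Frechet alg} together with Proposition~\ref{prop BR embed}.
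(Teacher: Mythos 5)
Your proposal is correct and follows the paper's own three-step strategy: Proposition~\ref{prop BR embed} gives $g_{1/2}(D)\in\cA_{S,\varphi}(E)^H$, a finite-propagation argument identifies the kernels off the diagonal, and the Fr\'echet-algebra convolution bound from the proof of Lemma~\ref{lem AX Frechet alg} finishes. The ``main obstacle'' you flag in Step~2 is resolved more directly than by the pairing-against-test-sections route you sketch: on the support of $1-\chi_{\varphi(y)}$ one has $d_X(\varphi(y),x')\geq 1$, so the triangle inequality gives $d_X(x,x')\geq 1/2$ for \emph{every} $x\in B_{1/2}(\varphi(y))$, not just for $x=\varphi(y)$; consequently the kernels of $f(D)\circ(1-\chi_{\varphi(y)})$ and $g_{1/2}(D)\circ(1-\chi_{\varphi(y)})$ coincide on the whole set $B_{1/2}(\varphi(y))\times X$, hence after pulling back by $\varphi$ on a neighbourhood of $(y,y')$ in $Y\times Y$, and \emph{locality of the differential operator} $P$ then yields $P\varphi^*(f(D)\circ(1-\chi_{\varphi(y)})\circ\kappa)(y,y')=P\varphi^*(g_{1/2}(D)\circ(1-\chi_{\varphi(y)})\circ\kappa)(y,y')$. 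This neighbourhood version of the kernel equality, followed by locality of $P$, is exactly the precision your Step~2 needs and was the only missing ingredient.
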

\begin{proof}
The function $\varphi_{1/2} \hat f$ is smooth, and it, and all its derivatives, decays faster than any exponential function. So $g_{1/2} \in B(\R)$. By Proposition \ref{prop BR embed}, we have $g_{1/2}(D) \in \cA_{S, \varphi}(E)^H$.

As in the proof of \eqref{eq kappa gr f}, we have for all $x, x' \in X$ with $d_X(x,x') \geq 1/2$,
\[
\kappa_{g_{1/2}(D)}(x,x') = \kappa_{f(D)}(x,x').
\]
On the right hand side, we note that  the finite-propagation methods from \cite{Higson00}, in particular Proposition 10.3.5, used in the proof of Proposition \ref{prop kernel decay}, apply to bounded Borel functional calculus, so also to the distributional kernel of $f(D)$. 

Fix $y \in Y$. 
Let $x,x' \in X$, and suppose that $d_X(\varphi(y), x) \leq 1/2$. If $(1-\chi_{\varphi(y)})(x') \not=0$, then $d_X(\varphi(y), x') \geq 1$, so $d_X(x,x') \geq 1/2$. Hence
\[
\kappa_{g_{1/2}(D)} \circ (1-\chi_{\varphi(y)}) (x, x') =   \kappa_{f(D)} \circ (1-\chi_{\varphi(y)}) (x, x'). 
\]
(In particular, the kernel $\kappa_{f(D)} \circ (1-\chi_{\varphi(y)})$ is smooth on $B_{1/2}(\varphi(y)) \times X$.) 
So for all $y'' \in \varphi^{-1}(B_{1/2}(\varphi(y)))$ and $y' \in Y$, 
\[
\begin{split}
\varphi^*(\kappa_{g_{1/2}(D)} \circ (1-\chi_{\varphi(y)}) \circ \kappa) (y'', y')&= 
\int_X \kappa_{g_{1/2}(D)} \circ (1-\chi_{\varphi(y)}) (\varphi(y''), x)\kappa(x, \varphi(y'))\, d\mu(x)\\
&= 
\int_X \kappa_{f(D)} \circ (1-\chi_{\varphi(y)}) (\varphi(y''), x)\kappa(x, \varphi(y'))\, d\mu(x)\\
&= \varphi^*(\kappa_{f(D)}(D) \circ (1-\chi_{\varphi(y)}) \circ \kappa) (y'', y').
\end{split}
\]

Hence, by locality of $P$,
\[
%\begin{split}
P\varphi^*(f(D)\circ (1-\chi_{\varphi(y)}) \circ \kappa )(y,y') = 
P\varphi^*(g_{1/2}(D)\circ (1-\chi_{\varphi(y)}) \circ \kappa )(y,y').
%\bigl(Q \circ \varphi^*(f(D)\circ (1-\chi_{\varphi(y)}) \circ \kappa ) \circ R\bigr)(y,y')\\
%&= 
%\bigl(Q \circ \varphi^*(g_{1/2}(D)\circ (1-\chi_{\varphi(y)}) \circ \kappa ) \circ R\bigr)(y,y')\\
%\bigl(Q \circ \varphi^*(g_{1/2}(D)\circ (1-\chi_{\varphi(y)}) \bigr)  \circ (\kappa  \circ R\bigr)(y,y')
%\end{split}
\]
Now \eqref{eq off diag} follows as in the proof of Lemma \ref{lem AX Frechet alg}.
%
%\[
%\varphi^*(f(D)\circ (1-\chi_x) \circ \kappa ) = \varphi^*(f(D)\circ (1-\chi_x))  \varphi^*\kappa  = 
% \varphi^*(g_{1/2}(D)\circ (1-\chi_x))  \varphi^*\kappa.
%\]

\end{proof}

\begin{lemma}\label{lem near diag}
For all $a\geq a_0$, there are $C_a>0$ and $P_1, \ldots, P_n \in S$ such that for all $y,y' \in Y$, 
\beq{eq near diag}
\|P\varphi^*(f(D)\circ \chi_{\varphi(y)}\circ \kappa )(y,y')\| 
\leq C_a e^{-ad_Y(y,y')} \sum_{j=1}^n \|\kappa\|_{a, P_j}.
\eeq
\end{lemma}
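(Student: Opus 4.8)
The plan is to bound the near-diagonal contribution $P\varphi^*(f(D)\circ \chi_{\varphi(y)}\circ \kappa)(y,y')$ by splitting into the two regimes already used in Proposition~\ref{prop kernel decay}: the region where $d_Y(y,y')$ is bounded (say $\leq A(B+2)$) and the region where it is large. In the bounded region, the function $(y,y')\mapsto \sup_{P\in S_l}\|P\varphi^*(f(D)\circ\chi_{\varphi(y)}\circ\kappa)(y,y')\|$ is $H$-invariant — since $D$, $\varphi$ and the $\chi$'s (up to the diagonal $H$-action, $h\cdot\chi_x = \chi_{hx}$) are all equivariant, and $P$ is $H\times H$-equivariant — and the relevant set descends to a compact subset of $(Y\times Y)/H$ since $Y/H$ is compact; hence the left side is bounded there by a constant, and multiplying by the bounded factor $e^{-ad_Y(y,y')}$ (which is bounded on that set) gives the estimate with $P_1=\dots=P_n$ irrelevant, or rather one absorbs everything into $C_a$. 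So the real work is the region where $d_Y(y,y')$ is large.

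For the large-$d_Y$ region I would argue as follows. Because $\chi_{\varphi(y)}$ is supported in $B_2(\varphi(y))$ and $P$ is local, the kernel value $P\varphi^*(f(D)\circ\chi_{\varphi(y)}\circ\kappa)(y,y')$ only sees $\kappa(x,\varphi(y'))$ for $x\in B_2(\varphi(y))$; by the triangle inequality and the quasi-isometry bound \eqref{eq phi quasi isom}, for such $x$ one has $d_X(x,\varphi(y'))\geq d_X(\varphi(y),\varphi(y'))-2 \geq A^{-1}d_Y(y,y')-B-2$, which is comparable to $d_Y(y,y')$ when the latter is large. Writing $P=Q\otimes R^*$ and pushing the differential operators inside the convolution as in the proof of Lemma~\ref{lem AX Frechet alg}, the expression becomes $\int_X (Q\circ\kappa_{f(D)}\circ\chi_{\varphi(y)})(\varphi(y),x)(\kappa\circ R)(x,\varphi(y'))\,d\mu(x)$ (after applying $\varphi^*$ appropriately and using assumption~3 on $D$ to rewrite $P\circ\varphi^*\circ(D\otimes 1)$-type terms in the form $A\circ Q\circ\varphi^*$). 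Now I use two inputs: (i) Lemma~\ref{lem sup kernel} (or rather its consequence via Lemma~\ref{lem off-diag}, and the fact that $Q\circ f(D)\circ\chi_{\varphi(y)}$ has a bounded Schwartz kernel with a sup-norm bounded uniformly in $y$ by $H$-invariance and Lemma~\ref{lem sup kernel}) to control the first factor, and (ii) the seminorm $\|\kappa\|_{a',P_j}$ to get $\|(\kappa\circ R)(x,\varphi(y'))\|\leq \|\kappa\|_{a',1\otimes R}e^{-a'd_X(x,\varphi(y'))}$. The $\chi_{\varphi(y)}$ confines $x$ to $B_2(\varphi(y))$, whose $\mu$-volume is uniformly bounded by bounded geometry, so the integral over $x$ of the product, after pulling out $e^{-a'(A^{-1}d_Y(y,y')-B-2)}$, is finite; choosing $a'$ a suitable multiple of $a$ converts this into $C_a e^{-ad_Y(y,y')}\sum_{j}\|\kappa\|_{a,P_j}$ with the $P_j$ ranging over the finitely many operators produced by assumption~3 on $D$ together with $1\otimes R$.

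The main obstacle I anticipate is bookkeeping the interplay between the three structures: the differential operator $P$ acting on the $\varphi^*$-pulled-back kernel, assumption~3 on $D$ (which lets one commute $P$ past $D\otimes 1$ and $1\otimes D$ at the cost of bounded endomorphisms $A$ and new operators $Q\in S$), and the need to keep all constants uniform in $y$, which ultimately rests on $H$-equivariance plus cocompactness. Concretely, one must be careful that $f(D)\circ\chi_{\varphi(y)}$ — where $f$ is only the given $L^\infty$ function with rapidly decaying off-diagonal Fourier data, not a Schwartz function — still has a genuine smooth Schwartz kernel near the support of $\chi_{\varphi(y)}$ so that $Q$ can be applied; this is where one invokes, as in Lemma~\ref{lem off-diag}, that $\kappa_{f(D)}$ agrees with the smooth kernel $\kappa_{g_{1/2}(D)}$ off the $1/2$-neighbourhood of the diagonal, while near the diagonal one uses instead that $f(D)$ is bounded and $\chi_{\varphi(y)}(1+D^2)^{-p}$ is Hilbert--Schmidt (Lemma~\ref{lem HS}), splitting $\chi_{\varphi(y)}=\chi_{\varphi(y)}(1+D^2)^{-p}(1+D^2)^p$ and feeding this into the Sobolev-embedding argument of Lemma~\ref{lem sup kernel} with $D$ replaced by $D^2$ as in Remark~\ref{rem sup kernel D2}. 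Once that technical point is handled, the estimate \eqref{eq near diag} follows by combining it with the volume bound on $B_2(\varphi(y))$ and the exponential decay of $\kappa$ exactly as in the proof of Lemma~\ref{lem AX Frechet alg}.
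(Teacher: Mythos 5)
Your proposal has the right instinct in one place — you recognize that the kernel of $f(D)\circ\chi_{\varphi(y)}$ is not smooth near the diagonal and that the Sobolev/elliptic machinery of Lemma~\ref{lem sup kernel} (in its $D^2$ variant, Remark~\ref{rem sup kernel D2}) is what must handle this — but the argument built around that observation has real gaps.

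First, the split into $d_Y(y,y')\leq A(B+2)$ and large $d_Y$ does not reduce anything. In the bounded regime you claim "the left side is bounded there by a constant", absorbing everything into $C_a$; but the left side of \eqref{eq near diag} depends on $\kappa$, and the estimate must be linear in the seminorms $\|\kappa\|_{a,P_j}$. Cocompactness and $H$-invariance give you a uniform bound for a \emph{fixed} $\kappa$, not a bound of the form $C_a\sum_j\|\kappa\|_{a,P_j}$. So the bounded regime needs the full argument just as much as the large-distance regime; the paper does not make this split, and neither should you.

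Second, and more substantially, your main computation rewrites $P\varphi^*(f(D)\circ\chi_{\varphi(y)}\circ\kappa)(y,y')$ as an integral of $(Q\circ\kappa_{f(D)}\circ\chi_{\varphi(y)})(\varphi(y),x)$ against $(\kappa\circ R)(x,\varphi(y'))$, then tries to bound the first factor pointwise. But $\kappa_{f(D)}(\varphi(y_1),x'')$ is singular at $x''=\varphi(y_1)$, which lies inside $B_2(\varphi(y))$ — exactly the region $\chi_{\varphi(y)}$ does not kill — so $Q\circ\kappa_{f(D)}\circ\chi_{\varphi(y)}$ is not a bounded smooth kernel and this pointwise factorisation has no meaning. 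You acknowledge this in the last paragraph, and the fix you gesture at (writing $\chi_{\varphi(y)}=\chi_{\varphi(y)}(1+D^2)^{-p}(1+D^2)^p$ and invoking Remark~\ref{rem sup kernel D2}) is in the right direction, but it is not carried out: once you invoke the $D^2$ version of Lemma~\ref{lem sup kernel}, you have reduced the problem to bounding $\max_{k+l\leq m}\|D^{2l}TD^{2k}\|_{\cB(L^2(E))}$ for a suitable $T$, and this is where the actual work lies. The paper takes $T:=f(D)\circ\chi_{\varphi(y)}\circ\kappa\circ\chi_{\varphi(y')}$ (note the cutoff on \emph{both} sides, which you do not introduce), commutes the powers of $D^2$ past the $\chi$'s via the binomial identities for $\ad(D^2)^j(\chi_x)$ — this is exactly where assumption~2 on $D$ enters, giving uniform bounds on $\|\ad(D^2)^j(\chi_x)\|$ via Lemma~\ref{lem chi x} — then inserts auxiliary cutoffs $\psi_x$, uses Lemma~\ref{lem bdd GK} to pass to sup-norms, and finally converts $D^{2(k-r)}\circ\kappa\circ D^{2(l-s)}$ into $(Q^{k-r}\otimes R^{l-s})\varphi^*\kappa$ via assumption~1, yielding the seminorms $\|\kappa\|_{\cdot,Q^{k-r}\otimes R^{l-s}}$ and, via the supports of $\psi$ and the quasi-isometry bound, the $e^{-ad_Y(y,y')}$ factor. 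None of the commutator argument, the cutoff insertion, or the use of assumptions~1 and~2 appears in your proposal; you only cite assumption~3, which is what drives Lemma~\ref{lem D mult} but is not used in the proof of this lemma. That missing chain from operator norms to seminorms of $\kappa$ is the heart of the proof, and without it the estimate \eqref{eq near diag} is not established.
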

\begin{proof}
Let $y,y' \in Y$. Consider the operator
\[
T := f(D) \circ \chi_{\varphi(y)} \circ \kappa \circ \chi_{\varphi(y')}.
\]
Then $\varphi^* \kappa_T = \varphi^*(f(D)\circ \chi_{\varphi(y)}\circ \kappa)$ in a neighbourhood of $(y,y')$. 
So by locality of $P$, the left hand side of \eqref{eq near diag} equals
\beq{eq norm P phi kappa}
\|(P \varphi^* \kappa_T)(y,y')\|.
\eeq
We now apply a version of Lemma \ref{lem sup kernel}, with the operator $D$ replaced by $D^2$, and $D \otimes 1 + \gamma \otimes D$ by $D^2 \otimes 1 + 1 \otimes D^2$ (see Remark \ref{rem sup kernel D2}). This shows that \eqref{eq norm P phi kappa} is bounded above, uniformly in $y$ and $y'$,  by a constant times
\beq{eq near diag 6}
%\|P \varphi^* \kappa_T\|_{\infty} \leq
 \max_{k+l \leq m} \|D^{2l} T D^{2k}\|_{\cB(L^2(E))},
\eeq
for $m$ large enough, and provided we can show that the operators in \eqref{eq near diag 6} are bounded. Hence it remains to estimate the norms in \eqref{eq near diag 6}.

Let $k,l \in \Z_{\geq 0}$. 
Since $f(D)$ commutes with $D$, we have
\beq{eq near diag 2}
D^{2k} T D^{2l} = f(D) \circ D^{2k} \circ  \chi_{\varphi(y)} \circ \kappa \circ \chi_{\varphi(y')} \circ D^{2l}.
\eeq
Applying the equalities
\[
\begin{split}
D^{2k} \circ  \chi_{\varphi(y)} &= \sum_{r=0}^k {k \choose r} \ad(D^2)^r(\chi_{\varphi(y)}) \circ D^{2(k-r)};\\
\chi_{\varphi(y')}  \circ D^{2l}   &= \sum_{s=0}^l {l \choose s} (-1)^s  D^{2(l-s)} \ad(D^2)^s(\chi_{\varphi(y')}),
\end{split}
\]
and using the second assumption on $D$ in Subsection \ref{sec prelim index}, 
 we find that the operator on the right of \eqref{eq near diag 2} 
% equals
%\[
%f(D) \circ 
%\sum_{r=0}^k \sum_{s=0}^l {k \choose r} {l\choose s} (-1)^s
%\ad(D^2)^r(\chi_{\varphi(y)}) \circ D^{2(k-r)} \circ \kappa \circ D^{2(l-s)} \circ \ad(D^{2})^s(\chi_{\varphi(y')}).
%\]
% This operator 
 is bounded, and that its operator norm is bounded by a constant times a finite sum of expressions of the form
\beq{eq near diag 3}
\|\ad(D^2)^r(\chi_{\varphi(y)}) \circ D^{2(k-r)} \circ \kappa \circ D^{2(l-s)} \circ \ad(D^{2})^s(\chi_{\varphi(y')})\|_{\cB(L^2(E))}.
\eeq

For $x = \varphi(y)$ and $x = \varphi(y')$, 
let $\psi_{x}  \in C^{\infty}_c(X)$ take values in $[0,1]$, and be constant $1$ on $B_2(x)$, and constant zero outside $B_3(x)$.
 Then  \eqref{eq near diag 3} equals
\[
\|\ad(D^2)^r(\chi_{\varphi(y)}) \circ \psi_{\varphi(y)} \circ D^{2(k-r)} \circ \kappa \circ D^{2(l-s)} \circ \psi_{\varphi(y')} \circ \ad(D^{2})^s(\chi_{\varphi(y')})\|_{\cB(L^2(E))}.
\]
By the properties of the functions $\chi_x$ in Lemma \ref{lem chi x}, the latter norm
is bounded above by a constant times
\[
\| \psi_{\varphi(y)} \circ D^{2(k-r)} \circ \kappa \circ D^{2(l-s)} \circ \psi_{\varphi(y')} \|_{\cB(L^2(E))}.
\]
By Lemma \ref{lem bdd GK}, this is less than or equal to
\begin{multline} \label{eq near diag 4}
C \sup_{x,x' \in X} e^{2a_0 d_X(x,x')} 
\| (\psi_{\varphi(y)} \circ D^{2(k-r)} \circ \kappa \circ D^{2(l-s)} \circ \psi_{\varphi(y')})(x,x')\|\\
\leq
C \sup_{d_X(x, \varphi(y)), d_X(x', \varphi(y')) \leq 3}  e^{2a_0 d_X(x,x')} 
\| (D^{2(k-r)} \circ \kappa \circ D^{2(l-s)})(x,x')\|.
\end{multline}
%if the supremum on the right is finite. 

Let $Q,R \in S$ be as in the first assumption on $D$ in Subsection \ref{sec prelim index}, let $x, x' \in X$, and choose $\tilde y, \tilde y' \in Y$ such that $x = \varphi(\tilde y)$ and $x' = \varphi(\tilde y')$. Let $a>0$, and let $A$ be as in \eqref{eq phi quasi isom}. Then 
\beq{eq near diag 8}
\begin{split}
\| (D^{2(k-r)} \circ \kappa \circ D^{2(l-s)})(x,x')\|  &= \| \varphi^*(D^{2(k-r)} \circ \kappa \circ D^{2(l-s)})(\tilde y,\tilde y')\| \\
&= \|((Q^{k-r} \otimes R^{l-s}) \varphi^*\kappa)(\tilde y,\tilde y')\|\\
&\leq \|\kappa\|_{2a_0A+a, Q^{k-r} \otimes R^{l-s}} e^{-(2a_0A+a)d_Y(\tilde y, \tilde y')}.
\end{split}
\eeq
If we choose $\tilde y$ and $\tilde y'$ as above for all $x,x' \in X$, and use \eqref{eq phi quasi isom} and the  estimate \eqref{eq near diag 8}, then we find
 that the right hand side of \eqref{eq near diag 4} is less than or equal to a constant times
\beq{eq near diag 5}
\|\kappa\|_{2a_0A+a, Q^{k-r} \otimes R^{l-s}} 
\sup_{d_X(x, \varphi(y)), d_X(x', \varphi(y')) \leq 3}  
e^{-ad_Y(\tilde y, \tilde y')}.
\eeq
Finally, note that if $d_X(x, \varphi(y)) \leq 3$, then \eqref{eq phi quasi isom} implies that
$d_Y(\tilde y, y) \leq A(B+3)$, and similarly for $x'$, $y'$ and $\tilde y'$. So
\[
d_Y(\tilde y, \tilde y') \geq d_Y(y,y') - 2A(B+3).
\]
So \eqref{eq near diag 5} is bounded by a constant times
\beq{eq near diag 7}
\|\kappa\|_{2a_0A+a, Q^{k-r} \otimes R^{l-s}} e^{-ad_Y(y, y')}.
\eeq	
We find that \eqref{eq near diag 6} is bounded by a finite sum of constants times expressions like \eqref{eq near diag 7}, which 
are of the form on the right hand side of \eqref{eq near diag}. So the claim follows.
\end{proof}

\begin{proposition}\label{prop f(D) multiplier}
If $f$ is as  above Lemma \ref{lem off-diag}, then $f(D)$ is a multiplier of $\cA_{S, \varphi}(E)^H$.
\end{proposition}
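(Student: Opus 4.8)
The plan is to combine the near-diagonal estimate (Lemma \ref{lem near diag}) and the off-diagonal estimate (Lemma \ref{lem off-diag}) via the partition of unity $1 = \chi_{\varphi(y)} + (1-\chi_{\varphi(y)})$, and then do the same for composition on the other side. First I would fix $\kappa \in \cA_{S,\varphi}(E)^H$, $P \in S$, and $a \geq a_0$, and write, for $y,y' \in Y$,
\[
P\varphi^*(f(D)\circ\kappa)(y,y') = P\varphi^*(f(D)\circ\chi_{\varphi(y)}\circ\kappa)(y,y') + P\varphi^*(f(D)\circ(1-\chi_{\varphi(y)})\circ\kappa)(y,y'),
\]
using that $\chi_{\varphi(y)}$ depends only on the first variable so both pieces are genuine kernels in a neighbourhood of $(y,y')$ and $P$, being local, acts on each separately. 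Lemma \ref{lem near diag} bounds the first term by $C_a e^{-ad_Y(y,y')}\sum_{j=1}^n \|\kappa\|_{a,P_j}$. For the second term, Lemma \ref{lem off-diag} bounds it by $C e^{-ad_Y(y,y')}\|g_{1/2}(D)(1-\chi_{\varphi(y)})\|_{2a,Q\otimes 1}\|\kappa\|_{a,1\otimes R}$; here one needs the factor $\|g_{1/2}(D)(1-\chi_{\varphi(y)})\|_{2a,Q\otimes 1}$ to be bounded uniformly in $y$, which follows because $g_{1/2}(D)\in\cA_{S,\varphi}(E)^H$ (so $g_{1/2}(D)\circ(1-\chi_{\varphi(y)})$ has a finite $\|\cdot\|_{2a,Q\otimes 1}$-seminorm) and the family $(1-\chi_{\varphi(y)})$ is $H$-translated from a compact set of $y$'s, by the same cocompactness argument used in Lemma \ref{lem chi x} and in the proof of Proposition \ref{prop kernel decay}. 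Multiplying through by $e^{ad_Y(y,y')}$ and taking the supremum over $y,y'$ gives
\[
\|f(D)\circ\kappa\|_{a,P} \leq C_a\sum_{j=1}^n\|\kappa\|_{a,P_j} + C\,\Bigl(\sup_{y\in Y}\|g_{1/2}(D)(1-\chi_{\varphi(y)})\|_{2a,Q\otimes 1}\Bigr)\|\kappa\|_{a,1\otimes R},
\]
which is finite and continuous in $\kappa$, so $f(D)\circ\kappa \in \cA_{S,\varphi}(E)^H$ and left multiplication by $f(D)$ is continuous. $H$-invariance of $f(D)\circ\kappa$ is immediate since both $f(D)$ (by $H$-equivariance of $D$ and Borel functional calculus) and $\kappa$ are $H$-invariant.

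Next I would handle composition on the right, $\kappa\circ f(D)$. As the paper notes ("The estimates for $\kappa\circ f(D)$ are analogous"), one repeats the argument with the partition $1 = \chi_{\varphi(y')} + (1-\chi_{\varphi(y')})$ inserted on the right, i.e. decomposing $\kappa\circ f(D) = \kappa\circ\chi_{\varphi(y')}\circ f(D) + \kappa\circ(1-\chi_{\varphi(y')})\circ f(D)$; the near-diagonal piece is controlled by the mirror image of Lemma \ref{lem near diag} and the off-diagonal piece by the mirror of Lemma \ref{lem off-diag}, using $f(D)$'s self-adjointness (so $f(D)^* = \bar f(D)$ is of the same type) to transpose roles. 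This yields $\kappa\circ f(D)\in\cA_{S,\varphi}(E)^H$ with a continuous bound. Together with the left case, and the elementary associativity identities $\lambda\circ(f(D)\circ\kappa) = (\lambda\circ f(D))\circ\kappa$ and so on (which hold on the dense subspace $\Gamma^\infty_{S,\varphi,\fp}$ by Fubini and extend by continuity), this shows $f(D)$ is a multiplier.

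The main obstacle I expect is the bookkeeping in the near-diagonal estimate — specifically ensuring the constants $C_a$ and the seminorm indices $P_j$ are genuinely uniform in $y,y'$, which is exactly what Lemma \ref{lem near diag} already provides, so in the proof of the proposition the real work is just invoking the two lemmas correctly and verifying the uniform boundedness of $\sup_{y}\|g_{1/2}(D)(1-\chi_{\varphi(y)})\|_{2a,Q\otimes 1}$. A secondary point worth a sentence is that $f(D)$ is a priori only a bounded operator, so "multiplier" should be checked in the sense that both $\lambda\mapsto f(D)\circ\lambda$ and $\lambda\mapsto\lambda\circ f(D)$ map $\cA_{S,\varphi}(E)^H$ to itself continuously and are adjointable with respect to the $*$-structure; adjointability follows since $f(D)^*=\bar f(D)$ and $\bar f$ satisfies the same hypotheses as $f$, so the left-multiplier by $\bar f(D)$ serves as the adjoint of the right-multiplier by $f(D)$.
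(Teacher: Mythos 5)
Your proposal is correct and follows essentially the same route as the paper: decompose $f(D)\circ\kappa$ via $1 = \chi_{\varphi(y)} + (1-\chi_{\varphi(y)})$, control the near-diagonal piece with Lemma \ref{lem near diag}, the off-diagonal piece with Lemma \ref{lem off-diag}, and repeat symmetrically for $\kappa\circ f(D)$. One small simplification you could make: the uniform bound on $\|g_{1/2}(D)(1-\chi_{\varphi(y)})\|_{2a,Q\otimes 1}$ over $y$ does not actually require the cocompactness/$H$-translation argument, since $1-\chi_{\varphi(y)}$ takes values in $[0,1]$ and acts as multiplication in the second variable while $Q\otimes 1$ differentiates only in the first, so $\|g_{1/2}(D)(1-\chi_{\varphi(y)})\|_{2a,Q\otimes 1} \leq \|g_{1/2}(D)\|_{2a,Q\otimes 1}$ uniformly in $y$.
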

\begin{proof}
Lemmas \ref{lem off-diag} and \ref{lem near diag} imply that for all $\kappa \in \cA_{S, \varphi}(E)^H$, $P \in S$ and $a>0$, there is a $C'>0$ such that for all  $y,y' \in Y$,
\[
\|P \varphi^*(f(D) \circ \kappa)(y,y')\| \leq C' e^{-ad_Y(y,y')}.
\]
So $f(D) \circ\kappa \in \cA_{S, \varphi}(E)^H$. Using analogous estimates, one shows that $\kappa \circ f(D) \in \cA_{S, \varphi}(E)^H$.
\end{proof}

\begin{proof}[Proof of Theorem \ref{thm multipliers}.]
We saw in Lemma \ref{lem D mult} that $D$ is a multiplier of $\cA_{S, \varphi}(E)^H$.

Consider the function $h_t$ in \eqref{eq ht}, so $R_t = h_t(D)$. Its Fourier transform is given by \eqref{eq hat ht}, so
$h_t$ has the properties of the function $f$ introduced above Lemma \ref{lem off-diag}. Hence Proposition \ref{prop f(D) multiplier} implies that $R_t$ is a multiplier of $\cA_{S, \varphi}(E)^H$.
\end{proof}

\section{Von Neumann algebras}

\label{sec vN}

We consider the  setting of Subsection \ref{sec prelim index}, and relate the index from Definition \ref{def H index} to an index defined in terms of group von Neumann algebras in the case of cofinite-volume actions.

%Let $\Gamma$ be a discrete group, acting properly and isometrtically on $X$. Suppose that $E$ is a $\Gamma$-equivariant vector bundle, and that the Hermitian metric on $E$ is invariant under the action by $\Gamma$. Suppose  that $D$ is $\Gamma$-equivariant. An important special case is the situation where $\Gamma$ is a discrete subgroup of $H$.

Let $\Gamma<H$ be a discrete subgroup.
Let $F \subset X$ be a fundamental domain for the action by $\Gamma$, such that the closure of $F$ equals the closure of its interior. Consider the unitary isomorphism
\[
U \colon L^2(E) \to l^2(\Gamma) \otimes L^2(E|_F)
\]
given by
\[
(Us)(\gamma, x) = \gamma s(\gamma^{-1}x),
\]
for $s \in L^2(E)$, $\gamma \in \Gamma$ and $x \in F$.

%As in Subsection \ref{sec Roe}, we write $\cA(E) := \cA_{\C}(E)$. 
Let $L$ be the left regular representation of $\Gamma$ in $l^2(\Gamma)$. 
%For $\kappa \in \cA(E)$, the restriction $\kappa|_{F \times F}$ defines a Hilbert--Schmidt operator on $L^2(E|_F)$.
 If $\kappa \in \cA_{\C}(E)^{\Gamma}$ and $\gamma \in \Gamma$, then we write $\gamma \circ \kappa = (\gamma, e) \cdot \kappa \in \cA_{\C}(E)$ in terms of the action \eqref{eq HxH action}, so that
\beq{eq gamma circ kappa}
(\gamma \circ \kappa)(x,x') = \gamma \kappa(\gamma^{-1}x,x'),
\eeq
for $x,x' \in X$.

\begin{lemma}\label{lem weak convergence}
Consider a $\Gamma$-invariant, measurable section $\kappa$ of $E \boxtimes E^*$ that is the Schwartz kernel of a bounded operator on $L^2(E)$, which we also denote by $\kappa$. Then
for all $s,s' \in L^2(E)$,
\beq{eq weak convergence}
\sum_{\gamma \in \Gamma} \bigl(s, U^{-1}  \circ (L_{\gamma} \otimes (\gamma \circ \kappa)|_{F \times F}) \circ Us'
 \bigr)_{L^2(E)} = (s, \kappa s')_{L^2(E)}.
\eeq
\end{lemma}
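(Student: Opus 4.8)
The plan is to unwind the unitary $U$ and compute the left-hand side of \eqref{eq weak convergence} directly as an integral over $X \times X$, then recognise it as $(s, \kappa s')_{L^2(E)}$. First I would observe that $U$ identifies $L^2(E)$ with $l^2(\Gamma) \otimes L^2(E|_F)$, and that the inverse is given by $(U^{-1}\eta)(x) = \gamma^{-1}\eta(\gamma, \gamma^{-1}x)$ when $x \in \gamma F$ (using that $F$ is a fundamental domain, so $X = \bigsqcup_{\gamma} \gamma F$ up to measure zero, which is where the hypothesis on $\overline{F}$ is used). Given a single $\gamma_0 \in \Gamma$, I would compute the operator $U^{-1} \circ (L_{\gamma_0} \otimes (\gamma_0 \circ \kappa)|_{F \times F}) \circ U$ as an integral operator on $L^2(E)$, with a Schwartz kernel supported on $\bigcup_{\delta \in \Gamma} \delta\gamma_0 F \times \delta F$, and check that on this set the kernel agrees with $\kappa(x,x')$ itself. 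Concretely, for $x \in \delta\gamma_0 F$ and $x' \in \delta F$, a bookkeeping calculation using $\Gamma$-invariance of $\kappa$ (i.e. $\kappa(\eta x, \eta x') = \eta\kappa(x,x')\eta^{-1}$, which unwinds \eqref{eq HxH action} on the diagonal) and the definition \eqref{eq gamma circ kappa} of $\gamma_0 \circ \kappa$ should show the kernel of the $\gamma_0$-term equals $\mathbf{1}_{\delta\gamma_0 F}(x)\,\kappa(x,x')\,\mathbf{1}_{\delta F}(x')$.

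Once that identification is in hand, summing over $\gamma_0 \in \Gamma$ replaces $\bigcup_{\delta}\delta\gamma_0 F \times \delta F$ (as $\gamma_0$ ranges over $\Gamma$) by all of $X \times X$ up to measure zero, since for each fixed $x' \in \delta F$ the sets $\delta\gamma_0 F$ tile $X$ as $\gamma_0$ varies. Hence the formal sum of kernels telescopes to $\kappa(x,x')$, and the sum of pairings becomes $\int_{X \times X}(s(x), \kappa(x,x')s'(x'))\,d\mu(x)\,d\mu(x') = (s, \kappa s')_{L^2(E)}$.

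The main obstacle is the convergence/interchange-of-sum-and-integral issue: the series on the left of \eqref{eq weak convergence} must be shown to converge (the lemma asserts it, so I must justify it), and one cannot naively interchange the sum over $\gamma$ with the double integral without an absolute-convergence or monotone-convergence argument, since $\kappa$ is only assumed to be the kernel of a bounded operator, not of a trace-class or Hilbert--Schmidt one. I would handle this by noting that the $\gamma_0$-th term equals $(P_{\gamma_0} s, \kappa\, Q s')$ or a similar expression built from the mutually orthogonal projections $P_{\gamma}$ onto $L^2(E|_{\gamma F})$ (more precisely, writing the $\gamma_0$-term as $\sum_{\delta}(\mathbf{1}_{\delta\gamma_0 F}s, \kappa\,\mathbf{1}_{\delta F}s')$ and then reorganising), so that the double sum over $\gamma_0$ and $\delta$ is really $\sum_{\alpha,\beta \in \Gamma}(\mathbf{1}_{\alpha F}s, \kappa\,\mathbf{1}_{\beta F}s')$; this converges absolutely by Cauchy--Schwarz together with $\sum_\alpha \|\mathbf{1}_{\alpha F}s\|^2 = \|s\|^2$, $\sum_\beta \|\mathbf{1}_{\beta F}s'\|^2 = \|s'\|^2$, and boundedness of $\kappa$, and its sum is $(s,\kappa s')$ by decomposing $s = \sum_\alpha \mathbf{1}_{\alpha F}s$ and $s' = \sum_\beta \mathbf{1}_{\beta F}s'$ in $L^2(E)$. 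The remaining work is purely the careful index-chasing to match the $\gamma_0$-indexed grouping coming from $U$ with the $(\alpha,\beta)$-indexed grouping, using $\alpha = \delta\gamma_0$, $\beta = \delta$.
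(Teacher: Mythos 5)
Your plan follows essentially the same strategy as the paper's proof: unwind the unitary $U$ explicitly, compute the action of each $\gamma$-term as an integral operator and identify its kernel with $\kappa$ restricted to suitable $\Gamma$-translates of $F\times F$, and then sum over $\Gamma$ to recover $(s,\kappa s')$. The paper carries out precisely this computation, phrased in terms of function values $(U^{-1}\circ(L_\gamma\otimes(\gamma\circ\kappa)|_{F\times F})\circ Us')(\gamma'x)$ rather than in terms of Schwartz kernels, but the index-chasing is the same.

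Two things to repair. First, a bookkeeping slip: from $(Us)(\gamma,x)=\gamma\,s(\gamma^{-1}x)$ for $x\in F$, the inverse is $(U^{-1}\eta)(x)=\gamma\,\eta(\gamma^{-1},\gamma^{-1}x)$ for $x\in\gamma F$; you have $\gamma$ and $\gamma^{-1}$ interchanged, which will propagate into the identification of indices $(\alpha,\beta)=(\delta\gamma_0,\delta)$ versus $(\gamma'\gamma,\gamma')$ in the paper. Second, the claimed absolute convergence of the full double sum $\sum_{\alpha,\beta}(\mathbf{1}_{\alpha F}s,\kappa\,\mathbf{1}_{\beta F}s')$ does not follow from the Cauchy--Schwarz estimate you indicate: the termwise bound $|M_{\alpha\beta}|\leq\|\kappa\|\,\|\mathbf{1}_{\alpha F}s\|\,\|\mathbf{1}_{\beta F}s'\|$ yields $\sum_{\alpha,\beta}|M_{\alpha\beta}|\leq\|\kappa\|\bigl(\sum_\alpha\|\mathbf{1}_{\alpha F}s\|\bigr)\bigl(\sum_\beta\|\mathbf{1}_{\beta F}s'\|\bigr)$, and these $\ell^1$-norms of $\ell^2$-summable sequences are in general infinite. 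What Cauchy--Schwarz does give cleanly is absolute convergence of each individual $\gamma$-term: for fixed $\gamma$, $\sum_\delta\|\mathbf{1}_{\delta\gamma F}s\|\,\|\mathbf{1}_{\delta F}s'\|\leq\|s\|\,\|s'\|$ because both families are orthogonal decompositions of $L^2(E)$. So each summand in \eqref{eq weak convergence} is well-defined, but convergence of the outer sum over $\gamma$ must rest on the unconditional $L^2$-convergence of $s=\sum_\alpha\mathbf{1}_{\alpha F}s$, $s'=\sum_\beta\mathbf{1}_{\beta F}s'$ together with boundedness of $\kappa$ and continuity of the inner product (this is also left implicit in the paper's proof).
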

\begin{proof}
%A reformulation of \eqref{eq U inverse} is that
For $\psi \in l^2(\Gamma) \otimes L^2(E|_F)$, $\gamma \in \Gamma$ and $x \in F$,
\[
(U^{-1}\psi)(\gamma x) = \gamma \psi(\gamma^{-1},x).
\]
Using this, and writing out definitions, we find that for all $s' \in L^2(E)$, $\gamma, \gamma' \in \Gamma$ and $x \in F$,
\[
\bigl(U^{-1}  \circ (L_{\gamma} \otimes (\gamma \circ \kappa)|_{F \times F}) \circ Us' \bigr)(\gamma'x) = 
\int_F \gamma' \gamma \kappa(\gamma^{-1}x,y) \gamma^{-1} \gamma'^{-1} s'(\gamma' \gamma y)\, d\mu(y).
\]
By $\Gamma$-invariance of $\kappa$, the right hand side equals
\[
\int_F  \kappa(\gamma'x,\gamma' \gamma y)  s'(\gamma' \gamma y)\, d\mu(y).
\]
So the left hand side of \eqref{eq weak convergence} equals
\[
\sum_{\gamma, \gamma' \in \Gamma} \int_F \int_F
\bigl(s(\gamma'x),  \kappa(\gamma'x,\gamma' \gamma y)  s'(\gamma' \gamma y) \bigr)_E \, d\mu(y)\, d\mu(x),
\]
which equals  the right hand side of \eqref{eq weak convergence}.
\end{proof}

%By Lemma \ref{lem bdd GK}, the natural action by kernels in $\cA_{\C}(E)^{\Gamma}$ on $L^2(E)$ defines an injective, continuous algebra homomorphism from $\cA_{\C}(E)^{\Gamma}$ into $\cB(L^2(E))$.
%Conjugation by $U$ is an isometric algebra isomorphism from $\cB(L^2(E))$ onto $ \cB(l^2(\Gamma) \otimes L^2(E|_F))$.
%The composition of these two maps is an injective, continuous algebra homomorphism
%\[
%j\colon \cA_{\C}(E)^{\Gamma} \to \cB(l^2(\Gamma) \otimes L^2(E|_F)).
%\]

Let $A(E)^{\Gamma}$ be the Banach space of bounded, $\Gamma$-equivariant operators  on $L^2(E)$ with bounded, measurable Schwartz kernels. For $T \in A(E)^{\Gamma}$, with Schwartz kernel $\kappa$, define
\[
\|T\|_{A(E)^{\Gamma}} := \|T\|_{\cB(L^2(E))} + \|\kappa\|_{L^{\infty}(E \boxtimes E^*)}.
\]
Note that $A(E)^{\Gamma}$ is not a Banach algebra. But by Lemma \ref{lem bdd GK}, the algebra $\cA_{\C}(E)^{\Gamma}$ is continuously included in $A(E)^{\Gamma}$.

Conjugation by $U$ is an isometric algebra isomorphism from $\cB(L^2(E))$ onto $ \cB(l^2(\Gamma) \otimes L^2(E|_F))$. It restricts to an injective,  continuous linear map
\[
j_{\cN}\colon A(E)^{\Gamma} \to \cB(l^2(\Gamma) \otimes L^2(E|_F)).
\]
This map is an algebra homomorphism on any algebra included in $A(E)^{\Gamma}$, such as $\cA_{\C}(E)^{\Gamma}$.

If $\calH$ is any Hilbert space, and $A \subset \cB(\calH)$ a $C^*$-algebra, then we write $\cN(\Gamma) \otimes A$ for the algebra of $T \in \cB(l^2(\Gamma) \otimes \calH)$ for which there is a sequence $(T_j)_{j=1}^{\infty}$ in the algebraic tensor product $\C\Gamma \otimes A$ such that for all $\psi, \psi' \in l^2(\Gamma) \otimes \calH$,
\beq{eq weak top}
(\psi, T_j \psi')_{l^2(\Gamma) \otimes \calH} \to (\psi, T \psi')_{l^2(\Gamma) \otimes \calH}.
\eeq
Here we let $\C \Gamma$ act on $l^2(\Gamma)$ via the left regular representation.
\begin{example}
If $\calH = \C = A$, then $\cN(\Gamma) \otimes \C$ is the completion of $\C \Gamma$ in the weak topology on operators on $l^2(\Gamma)$, i.e.\ the von Neumann algebra of $\Gamma$.
\end{example}
%(\Todo: is there are known/canonical term for the tensor product $\cN(\Gamma) \otimes A$?)

\begin{proposition}\label{prop j Cstar}
If $X/\Gamma$ has finite volume, then
\[
j_{\cN}(A(E)^{\Gamma})\subset \cN(\Gamma) \otimes \cK(L^2(E|_F)).
\]
Furthermore, the map
 $j_{\cN}\colon A(E)^{\Gamma} \to \cN(\Gamma) \otimes \cK(L^2(E|_F))$ is continuous.
\end{proposition}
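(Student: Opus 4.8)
The plan is to combine the isometry property of conjugation by $U$ with Lemma~\ref{lem weak convergence}. Recall that $j_{\cN}(T) = UTU^{-1}$, so conjugation by the unitary $U$ is an isometry for the operator norm; hence $\|j_{\cN}(T)\|_{\cB(l^2(\Gamma) \otimes L^2(E|_F))} = \|T\|_{\cB(L^2(E))} \leq \|T\|_{A(E)^{\Gamma}}$. Since $\cN(\Gamma) \otimes \cK(L^2(E|_F))$ sits inside $\cB(l^2(\Gamma) \otimes L^2(E|_F))$ with the induced operator norm, continuity of $j_{\cN}$ (in fact contractivity) is automatic once we know the inclusion $j_{\cN}(A(E)^{\Gamma}) \subset \cN(\Gamma) \otimes \cK(L^2(E|_F))$. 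So the content of the proposition is this inclusion.

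Fix $T \in A(E)^{\Gamma}$ with Schwartz kernel $\kappa$. For $\gamma \in \Gamma$ write $\kappa_{\gamma} := (\gamma \circ \kappa)|_{F \times F}$, the integral operator on $L^2(E|_F)$ with kernel $(x,x') \mapsto \gamma\kappa(\gamma^{-1}x,x')$ on $F \times F$. The key step is to show that each $\kappa_{\gamma}$ is Hilbert--Schmidt, hence compact: the pointwise norm of its kernel is bounded by $\|\kappa\|_{L^{\infty}(E \boxtimes E^*)}$ since $\Gamma$ acts on $E$ by fibrewise isometries, and $F$ has finite volume because $\vol(X/\Gamma) < \infty$; therefore the kernel lies in $L^2(F \times F)$. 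This is the only place where the finite-volume hypothesis is used, and I expect it to be the main point of the proof — everything else is bookkeeping.

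Finally, enumerate $\Gamma = \{\gamma_1, \gamma_2, \dots\}$ and put $T_N := \sum_{j=1}^N L_{\gamma_j} \otimes \kappa_{\gamma_j}$, which lies in the algebraic tensor product $\C\Gamma \otimes \cK(L^2(E|_F))$ by the previous paragraph. Lemma~\ref{lem weak convergence} says that for all $s, s' \in L^2(E)$ the series $\sum_{\gamma} (s, U^{-1}(L_{\gamma} \otimes \kappa_{\gamma})U s')_{L^2(E)}$ converges to $(s, Ts')_{L^2(E)}$; writing $\psi = Us$, $\psi' = Us'$ and using surjectivity of $U$, this says exactly that $(\psi, T_N \psi') \to (\psi, j_{\cN}(T)\psi')$ for all $\psi, \psi' \in l^2(\Gamma) \otimes L^2(E|_F)$. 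By the definition of $\cN(\Gamma) \otimes \cK(L^2(E|_F))$ recalled just before the proposition, this exhibits $j_{\cN}(T)$ as an element of $\cN(\Gamma) \otimes \cK(L^2(E|_F))$, which completes the argument.
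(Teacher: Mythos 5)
Your proof is correct and follows essentially the same route as the paper: the norm equality $\|j_{\cN}(T)\| = \|T\|_{\cB(L^2(E))}$ gives continuity, the finite-volume hypothesis makes each $(\gamma \circ \kappa)|_{F \times F}$ Hilbert--Schmidt hence compact, and Lemma~\ref{lem weak convergence} together with unitarity of $U$ provides the required weak approximation by elements of $\C\Gamma \otimes \cK(L^2(E|_F))$.
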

\begin{proof}%[Proof of Proposition \ref{prop j Cstar}.]
Let $\kappa \in A(E)^{\Gamma}$. Then $j_{\cN}(\kappa) \in \cB(l^2(\Gamma) \otimes L^2(E|_F))$, and 
\beq{eq jN bdd}
\|j_{\cN}(\kappa)\|_{\cB(l^2(\Gamma) \otimes L^2(E|_F))} = \|\kappa\|_{\cB(L^2(E))}.
\eeq

Now for every $\gamma \in \Gamma$, the operator $(\gamma \circ \kappa)|_{F \times F}$ on $L^2(E|_F)$ is Hilbert--Schmidt, hence compact, because $F$ has finite volume and $\kappa$ is bounded. So
\[
L_{\gamma} \otimes (\gamma \circ \kappa)|_{F \times F} \in \C\Gamma \otimes \cK(L^2(E|_F)).
\]
%By Lemma \ref{lem bdd GK}, $\kappa$ defines a bounded operator on $L^2(E)$. 
By Lemma \ref{lem weak convergence} and unitarity of $U$, the sum 
\[
\sum_{\gamma} L_{\gamma} \otimes (\gamma \circ \kappa)|_{F \times F} 
\]
converges to $j_{\cN}(\kappa)$ in the topology defined by \eqref{eq weak top}. So $j_{\cN}(\kappa)  \in  \cN(\Gamma) \otimes \cK(L^2(E|_F))$.

Continuity of $j_{\cN}$ follows from \eqref{eq jN bdd}: if  $\psi, \psi' \in l^2(\Gamma) \otimes L^2(E|_F)$, then
\[
|(\psi, j_{\cN}(\kappa) \psi')_{l^2(\Gamma) \otimes L^2(E|_F)}| \leq  \|\kappa\|_{\cB(L^2(E))} \|\psi \|_{l^2(\Gamma) \otimes L^2(E|_F)} 
\|\psi' \|_{l^2(\Gamma) \otimes L^2(E|_F)}. 
\]
\end{proof}

Suppose that $X/\Gamma$ has finite volume. Then 
by Lemma \ref{lem bdd GK} and Proposition \ref{prop j Cstar}, we obtain a continuous algebra homomorphism $j_{\cN}\colon \cA_{\C}(E)^{\Gamma}\to \cN(\Gamma) \otimes \cK(L^2(E|_F))$. This induces
\[
(j_{\cN})_*\colon K_0(\cA_{\C}(E)^{\Gamma}) \to K_0( \cN(\Gamma) \otimes \cK(L^2(E|_F))) = K_0(\cN(\Gamma)).
\]
We obtain a link with the index studied in \cite{Atiyah76, Connes82, Wang14}. 
%Let $D$ be as in Subsection \ref{sec prelim index}, with $H= \Gamma$. (\Todo: no, $H$ acts cocompactly.) Then w
We have
\[
(j_{\cN})_*(\ind_{\C, \Gamma}(D)) \in K_0(\cN(\Gamma)).
\]
On the algebra $\cN(\Gamma)$, we have the von Neumann trace $\tau_e^{\Gamma}$, given by evaluation at $e \in \Gamma$. We will extend this to a trace on a subalgebra of 
$\cN(\Gamma) \otimes \cK(L^2(E|_F))$, and apply it to
$j_{\cN}(\ind_{\C, \Gamma}(D) )$. 
\begin{lemma}\label{lem Tre cts}
For all $\kappa \in \cA_{\C}(E)^{\Gamma}$, the integral
\[
\Tr_e(\kappa) := \int_F \tr(\kappa(x,x))\, d\mu(x)
\]
converges. This defines a continuous trace $\Tr_e$ on $\cA_{\C}(E)^{\Gamma}$.
\end{lemma}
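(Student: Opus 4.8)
The plan is to establish convergence and continuity first, and then the trace property by the standard ``unfolding'' argument, with every interchange justified by the exponential off-diagonal decay of kernels in $\cA_{\C}(E)$ together with the uniformly exponential growth of $X$.

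First I would note that every $\kappa \in \cA_{\C}(E)$ is a genuine continuous, bounded section of $E \boxtimes E^*$, since the seminorm $\|\cdot\|_{1,1}$ dominates uniform convergence, so a Cauchy net of smooth finite-propagation sections converges uniformly; moreover $\|\kappa(x,x')\| \le e^{-d(x,x')}\|\kappa\|_{1,1}$ for all $x,x'$. In particular $x \mapsto \kappa(x,x)$ is a continuous section of $\End(E)$, and from the $\Gamma$-invariance $\kappa(x,x) = \gamma\,\kappa(\gamma^{-1}x,\gamma^{-1}x)\,\gamma^{-1}$ together with $\tr(ABA^{-1}) = \tr(B)$, the function $x \mapsto \tr(\kappa(x,x))$ is continuous and $\Gamma$-invariant, with $|\tr(\kappa(x,x))| \le \rank(E)\,\|\kappa(x,x)\| \le \rank(E)\,\|\kappa\|_{1,1}$ (here $\rank(E)$ is finite by bounded geometry). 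Since $F$ has finite volume, the integral defining $\Tr_e(\kappa)$ then converges absolutely and satisfies $|\Tr_e(\kappa)| \le \rank(E)\,\mu(F)\,\|\kappa\|_{1,1}$, so $\Tr_e$ is a well-defined, continuous linear functional on $\cA_{\C}(E)^{\Gamma}$.

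For the trace property I would take $\kappa,\lambda \in \cA_{\C}(E)^{\Gamma}$; these are continuous bounded sections with $\|\kappa(x,y)\| \le \|\kappa\|_{a,1}\,e^{-ad(x,y)}$ for all $a>0$ (and similarly for $\lambda$), and the composition is given by $(\kappa\lambda)(x,y) = \int_X \kappa(x,z)\lambda(z,y)\,d\mu(z)$, the integral converging absolutely by Definition~\ref{def unif exp}, this being the continuous extension of \eqref{eq compos kernels}. Fixing $a \ge a_0$, one has
\[
\int_F\int_X \big|\tr\big(\kappa(x,y)\lambda(y,x)\big)\big|\,d\mu(y)\,d\mu(x) \le \rank(E)\,C\,\mu(F)\,\|\kappa\|_{a,1}\|\lambda\|_{a,1} < \infty ,
\]
which licenses all the interchanges below. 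Using $\tr(UV)=\tr(VU)$ and writing $X = \bigsqcup_{\gamma\in\Gamma}\gamma F$ up to a null set,
\[
\Tr_e(\kappa\lambda) = \int_F\int_X \tr\big(\lambda(y,x)\kappa(x,y)\big)\,d\mu(y)\,d\mu(x) = \sum_{\gamma\in\Gamma}\int_F\int_F \tr\big(\lambda(\gamma y',x)\kappa(x,\gamma y')\big)\,d\mu(y')\,d\mu(x) .
\]
The $\Gamma$-invariance \eqref{eq HxH action} gives $\lambda(\gamma y',x) = \gamma\,\lambda(y',\gamma^{-1}x)\,\gamma^{-1}$ and $\kappa(x,\gamma y') = \gamma\,\kappa(\gamma^{-1}x,y')\,\gamma^{-1}$, so the integrand equals $\tr\big(\lambda(y',\gamma^{-1}x)\kappa(\gamma^{-1}x,y')\big)$; the substitution $x'' = \gamma^{-1}x$ (using $\Gamma$-invariance of $\mu$) replaces $\int_F d\mu(x)$ by $\int_{\gamma^{-1}F} d\mu(x'')$, and summing over $\gamma$ re-tiles $X$, giving
\[
\Tr_e(\kappa\lambda) = \int_F\int_X \tr\big(\lambda(y',x'')\kappa(x'',y')\big)\,d\mu(x'')\,d\mu(y') = \Tr_e(\lambda\kappa) .
\]

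The only delicate point is the bookkeeping in this unfolding: that $F$ tiles $X$ up to measure zero (which uses that $F$ is a fundamental domain whose boundary has measure zero, as in the hypothesis that $\overline{F}$ equals the closure of its interior), and that the absolute bound displayed above genuinely covers the interchange of $\int_F\int_X$, the splitting over $\Gamma$, the change of variables and the re-tiling. None of the individual steps is hard once absolute convergence is in hand; the exponential decay built into $\cA_{\C}(E)$ measured against the uniformly exponential growth of $X$, exactly as exploited in Lemma~\ref{lem AX Frechet alg}, is what supplies it.
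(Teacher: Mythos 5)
Your proof is correct and, for convergence and continuity, proceeds exactly as the paper's one-line argument: the seminorm $\|\cdot\|_{1,1}$ bounds the diagonal values of $\kappa$ uniformly, so finite volume of $F$ gives absolute convergence together with the estimate $|\Tr_e(\kappa)|\le \rank(E)\,\mu(F)\,\|\kappa\|_{1,1}$. You additionally spell out the trace property $\Tr_e(\kappa\lambda)=\Tr_e(\lambda\kappa)$ via the standard $\Gamma$-unfolding of $\int_F\int_X$ into $\sum_\gamma\int_F\int_F$ with the Fubini interchange justified by the exponential decay against uniformly exponential growth, which the paper's proof leaves implicit; this is a welcome elaboration of the same approach rather than a different route.
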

\begin{proof}
This follows from boundedness of kernels in $ \cA_{\C}(E)^{\Gamma}$ and finite volume of $F$.
\end{proof}

%Let $\tau_e^{\Gamma}$ be the von Neumann trace on $\cN(\Gamma)$, defined by evaluating functions at the identity element. 
For a bounded, continuous $\kappa_F \in \Gamma(E|_F \boxtimes E^*|_F)$, we write
\[
\Tr_F(\kappa_F) = \int_F \tr(\kappa_F(y,y))\, d\mu(y).
\]
It is immediate from the definitions that for all $\kappa \in \cA_{\C}(E)^{\Gamma}$,
\beq{eq tau e Tr e}
(\tau_e^{\Gamma} \otimes \Tr_F)(j_{\cN}(\kappa)) = \Tr_e(\kappa).
\eeq
In particular, Lemma \ref{lem Tre cts} implies that $\tau_e^{\Gamma} \otimes \Tr_F$ defines a trace on $\im(j_{\cN}) \subset \cN(\Gamma) \otimes \cK(L^2(E|_F))$, which is continuous in the topology on $\im(j_{\cN})$ that makes $j_{\cN}$ a homeomorphism.
 Hence we can apply $\tau_e^{\Gamma} \otimes \Tr_F$ to 
\beq{eq index im jN}
(j_{\cN})_*(\ind_{\C, \Gamma}(D)) \in K_0(\im(j_{\cN})).
\eeq

If $D$ is equivariant with respect to $H$, rather than just $\Gamma$, then 
$
\Tr_e(\ind_{\C, \Gamma}(D))
$
equals $\vol(\Gamma \backslash H)$ times the $L^2$-index of $D$, with respect to the action by $H$. This was computed in Theorem 5.2 in \cite{Connes82} for homogeneous spaces, and in \cite{Wang14} in general. We state a consequence for the index \eqref{eq index im jN} in the case where $D$ is a twisted $\Spinc$-Dirac operator. A generalisation can be deduced analogously from Theorem 6.12 in \cite{Wang14}.

Suppose that $X$ has a $H$-equivariant $\Spinc$-structure, with spinor bundle $\calS \to X$ and determinant line bundle $L_{\det}$. Let $E \to M$ be a $H$-equivariant, Hermitian vector bundle, equipped with a $H$-invariant, metric-preserving connection. Let $D$ be the twisted $\Spinc$-Dirac operator on $\calS \otimes E$ associated to a Clifford connection on $\calS$ and the given connection on $E$. Then by \eqref{eq tau e Tr e} and   Theorem 6.10 in \cite{Wang14}, we have the following $L^2$-index theorem:
\[
(\tau^{\Gamma}_e \otimes \Tr_F)\bigl(j_{\cN}(\ind_{\C, \Gamma}(D))\bigr) = \vol(\Gamma \backslash H) \int_F \hat A(X) e^{c_1(L_{\det})/2} \ch(E).
\]

%%%% Hier gebleven %%%%
%
%
% In the setting of Theorem \ref{thm index ss}, we have the following $L^2$-index theorem.
%\begin{corollary}%[$L^2$-index theorem]
%Let $D_W$ be as in \eqref{eq DW}, and $d^G_{\lambda+ \rho_c}$ as in \eqref{eq def dG}. Then
%\[
%(\tau^{\Gamma}_e \otimes \Tr_F)\bigl(j_{\cN}(\ind_{\C, \Gamma}(D_W))\bigr) = \vol(\Gamma \backslash G) d^G_{\lambda + \rho_c}.
%\] 
%\end{corollary}
%\begin{proof}
%Let $\cA(G)$ be the completion of $C^{\infty}_c(G)$ in the seminorms $\|\cdot \|_{a,1,1}$ in \eqref{eq norms SG}.
%Then as in Lemma \ref{lem AEG}, we obtain a continuous isomorphism $\Psi\colon \cA_{\C}(E)^G \to (\cA(G) \otimes \End(V))^{K \times K}$.
%One can check directly that 
%\[
%(\tau_e^{\Gamma} \otimes \Tr_F) \circ j_{\cN}|_{ \cA_{\C}(E)^G} = 
%\Tr_e|_{\cA_{\C}(E)^G} = \vol(\Gamma \backslash G)\tau_e \circ \Psi.
%\]
%Hence the claim follows from case (a) of Theorem \ref{thm index ss}. 
%\end{proof}
%

 \bibliographystyle{plain}

\bibliography{mybib}

\end{document}